\newtheorem{thm}{Theorem}[subsection]
\newtheorem{prop}[thm]{Proposition}
\newtheorem{ex}[thm]{Example}
\newtheorem{rem}[thm]{Remark}
\newtheorem{conj}[thm]{Conjecture}
\newtheorem{hypo}[thm]{Hypothesis}
\newcommand{\npmod}[1]{\!\!\pmod{#1}}
\newcommand{\nnpmod}[1]{\!\!\!\!\pmod{#1}}
\newcommand{\wsphat}{\sphat\;}
\newenvironment{proof}{\par\noindent{\bf[Proof]}}%
                      {$\blacksquare$\noindent\par\vspace{0.5\baselineskip}}
                      {$\blacksquare$\par\noindent}
\font\b=cmr10 scaled \magstep3
\def\bigzerou{\smash{\lower0.7ex\hbox{\b 0}}}
\def\bigastl{\smash{\lower0.7ex\hbox{\b *}}}
\def\bigastu{\smash{\lower2.7ex\hbox{\b *}}}
\renewcommand{\subsection}{\@startsection%
  {subsection}%
  {2}%
  {0mm}%
  {\baselineskip}%
  {-0.2\parindent}%
  {\normalfont\normalsize\upshape\bfseries}}%
\def\@dotsep{1.5}
\def\@pnumwidth{1em}
\title{Regular characters of $GL_n(O)$ and\\
       Weil representations over finite fields} 
\author{Koichi Takase}
\begin{document}   
\maketitle

\section{Introduction}
\label{sec:introduction}
Let $F$ be a non-Archimedean local field, 
$O$ its integer ring, $\frak{p}$ the maximal ideal of $O$ which is
generated by $\varpi$. The residue class field $O/\frak{p}$ is
denoted by $\Bbb F$ which is a finite field of $q$ elements. 
Let us denote by $O_l$ the residue class ring
$O/\frak{p}^l$ for a positive integer $l$ so that $O_1=\Bbb F$. 
Fix a continuous unitary character $\tau$ of the additive group $F$
such that 
$$
 \{x\in F\mid \text{\rm $\tau(xy)=1$ for all $y\in O$}\}=O.
$$
Define a unitary character $\widehat\tau$ of the additive group $\Bbb
F$ by 
$\widehat\tau\left(\overline x\right)
 =\tau\left(\varpi^{-1}x\right)$. 

The irreducible unitary representations of
$GL_n(O)$ may play an important role in the harmonic analysis of 
$GL_n(F)$, because the group $GL_n(O)$ is a maximal compact subgroup
of the locally compact group $GL_n(F)$. 
Since each irreducible unitary
representation of  $GL_n(O)$ factors through a finite group $G_r=GL_n(O_r)$
for some positive integer $r$, the classification of the
irreducible unitary representations of $GL_n(O)$ 
is reduced to determine
the irreducible complex representations of the finite group $G_r$. The
classification for the case $r=1$ is given by J.A.Green
\cite{Green1955}. Although the complete classification
of the irreducible representations of $G_r$, 
for the case $r>1$, is a {\it wild problem} 
\cite[p.4417]{Stasinski2009}, a remarkable
partial classification is given by G.Hill
\cite{Hill1993,Hill1994,Hill1995-1,Hill1995-2}. 
Let us recall his approach. 

 For any integer $0<i<r$, let us denote by $K_i$ the kernel of the
canonical surjective group homomorphism $G_r\to G_i$. Then the mapping 
$1_n+\varpi^iX\pmod{\frak{p}^r}\mapsto X\pmod{\frak p}$ gives a
surjective group homomorphism of $K_i$ onto the additive group
$M_n(\Bbb F)$. For any $\Bbb F$-vector subspace $U$ of $M_n(\Bbb
F)$, let us denote by $K_i(U)$ the inverse image of $U$ under the
group homomorphism, which is a normal subgroup of $K_i$.

Let $l$ be the smallest integer such that $r/2\leq l$ and put 
$l+l^{\prime}=r$. Then $K_l$ is
isomorphic to the additive group $M_n(O_{l^{\prime}})$ via the group
isomorphism 
$1_n+\varpi^lX\pmod{\frak{p}^r}\mapsto X\pmod{\frak{p}^{l^{\prime}}}$. For
any $\beta\in M_n(O)$, let us denote by $\psi_{\beta}$ the character
of $K_l$ defined by 
$$
 \psi_{\beta}(k)
 =\tau\left(\varpi^{-l^{\prime}}\text{\rm tr}(X\beta)\right)
$$
for $k=1_n+\varpi^lX\pmod{\frak{p}^r}\in K_l$. Then 
$\beta\pmod{\frak{p}^{l^{\prime}}}\mapsto\psi_{\beta}$ gives an isomorphism
of the additive group $M_n(O_{l^{\prime}})$ to the character group of $K_l$.
For each $\psi_{\beta}$, let us denote by 
$\text{\rm Irr}(G_r\mid\psi_{\beta})$ 
the equivalence classes of the irreducible complex representation
$\pi$ of $G_r$ such that 
$$
 \langle\psi_{\beta},\pi\rangle_{K_l}
 =\dim_{\Bbb C}\text{\rm Hom}_{K_l}(\psi_{\beta},\pi)>0.
$$
Note that $GL_n(O)$-conjugation of $\beta$ gives the same 
$\text{\rm Irr}(G_r\mid\psi_{\beta})$. 
Note also that, for any $\gamma\in GL_n(O)$, we have  
$\psi_{\gamma\beta\gamma^{-1}}(k)=\psi_{\beta}(\gamma^{-1}k\gamma)$. So
let us denote by $G_r(\psi_{\beta})$ the subgroup of 
$\gamma\pmod{\frak{p}^r}\in G_r$ such that 
$\gamma\beta\gamma^{-1}\equiv\beta\pmod{\frak{p}^{l^{\prime}}}$. Note that 
$K_l$ is contained in $G_r(\psi_{\beta})$. Then
Clifford theory says that $\text{\rm Irr}(G_r\mid\psi_{\beta})$ is determined by
the set $\text{\rm Irr}(G_r(\psi_{\beta})\mid\psi_{\beta})$ 
of the equivalence class of irreducible complex
representation $\sigma$ of $G_r(\psi_{\beta})$ such that 
$\langle\psi_{\beta},\sigma\rangle_{K_l}>0$. More precisely, 
$$
 \sigma\mapsto\text{\rm Ind}_{G_r(\psi_{\beta})}^{G_r}\sigma
$$
gives a bijection of $\text{\rm Irr}(G_r(\psi_{\beta})\mid\psi_{\beta})$ 
onto $\text{\rm Irr}(G_r\mid\psi_{\beta})$. 
So Hill's approach is to determine the set 
$\text{\rm Irr}(G_r(\psi_{\beta})\mid\psi_{\beta})$ 
according to the specific property of 
$\overline\beta=\beta\pmod{\frak p}\in M_n(\Bbb F)$. 

Suppose that $\overline\beta\in M_n(\Bbb F)$ is regular, that is, the
characteristic polynomial $\chi_{\overline\beta}(t)$ 
of $\overline\beta$ is equal to its minimal polynomial. Then we have 
(\cite[Cor. 3.9]{Hill1995-2})
$$
 G_r(\psi_{\beta})
 =\left(O_r[\beta_r]\right)^{\times}K_{l^{\prime}}
$$
where $\beta_r=\beta\pmod{\frak{p}^r}\in M_n(O_r)$ 
(the residue class of $X\in M_n(O)$ modulo a power $\frak{p}^i$ with
$i>1$ is denoted by $X_i\in M_n(O_i)$ whenever there is no
risk of confusion). For the sake of simplicity let us denote 
$\mathcal{C}=\left(O_r[\beta_r]\right)^{\times}$. 
If $r$ is even, then $l^{\prime}=l$ and all the elements of 
$\text{\rm Irr}(G_r(\psi_{\beta})\mid\psi_{\beta})$ 
are one-dimensional which are determined by the extensions to 
$\mathcal{C}$ of $\psi_{\beta}|_{\mathcal{C}\cap K_l}$. 

Theorem 4.6 in his paper \cite{Hill1995-2}, Hill treats the case 
where $r=2l-1$ is odd and the regular
$\overline\beta\in M_n(\Bbb F)$ is split, that is the eigenvalues of 
$\overline\beta$ are contained in $\Bbb F$. 
Unfortunately the proof of the theorem works only for a semi-simple
$\overline\beta$ as shown by Proposition
\ref{prop:k-l-1-conjugate-of-psi-in-x-0-psi-beta} of this paper. 

Shintani \cite{Shintani1968} and G\'erardin \cite{Gerardin1972} treat
the case where $r=2l-1$ is odd and 
the characteristic polynomial of $\overline\beta$ is
irreducible over $\Bbb F$ (the cuspidal case). However their method is
rather complicated because it 
depends on the parity of $n$ and treats two cases separately. 

In this paper we will assume that $\overline\beta\in M_n(\Bbb F)$ is
regular and $r=2l-1$ is odd and will give a method to describe the set 
$\text{\rm Irr}(G_r(\psi_{\beta})\mid\psi_{\beta})$ 
in a uniform manner under a hypothesis that a certain
Schur multiplier associated with a symplectic space over finite field 
is trivial 
(Theorem \ref{th:description-of-regular-character-under-hypothesis}).
This hypothesis is valid if the characteristic polynomial 
of $\overline\beta$ is separable (Theorem
\ref{th:original-hypothesis-is-valid-if-chara-poly-is-separable}) so
that the cuspidal case is included in this case.
In the case of split $\overline\beta$, we will give evidences good 
enough to call the hypothesis a conjecture (Theorem 
\ref{th:original-hypothesis-is-valid-in-split-small-n}). 

Our basic idea is that the group $K_{l-1}$ is very close to the
Heisenberg group associated with a symplectic space over a finite
field. So we can construct an irreducible representation of $K_{l-1}$
via the Schr\"odinger representation. Then the theory of Weil
representation \cite{Weil1964} enable us to extend the irreducible
representation of $K_{l-1}$ to a representation of
$\mathcal{C}K_{l-1}$, but there appears an obstruction
to the extension (section 
\ref{sec:regular-character-via-weil-rep-over-finite-field}).
This obstruction is described by certain Schur
multiplier with which we will mainly concern in this paper. 
The section
\ref{sec:schur-multiplier-of-symplectic-space-over-finite-field} is 
devoted to the study of it. 

The characteristic of the finite field $\Bbb F$ is arbitrary in 
sections 
\ref{sec:remark-on-a-result-of-g-hill} and 
\ref{sec:general-description-of-the-regular-character}. It is assumed
to be odd in sections 
\ref{sec:schur-multiplier-of-symplectic-space-over-finite-field} and 
\ref{sec:regular-character-via-weil-rep-over-finite-field}. 

Acknowledgment: the author express his thanks to the referee for
reading carefully the manuscript and for giving many
suggestions to improve it. 

\section{Remarks on a result of Hill}
\label{sec:remark-on-a-result-of-g-hill}

\subsection{}
\label{subsec:remark-on-result-of-hill}
In this section, we will assume that $r=2l-1$ is odd and that 
$\overline\beta\in M_n(\Bbb F)$ is regular and split. 
So, after taking suitable
$GL_n(O)$-conjugate of $\beta$, we can assume that $\overline\beta$ is
in a Jordan canonical form
\begin{equation}
 \overline\beta
 =\begin{bmatrix}
   J_{n_1}(a_1)&      &            \\
               &\ddots&            \\
               &      &J_{n_f}(a_f)
  \end{bmatrix},
\label{eq:beta-mod-p-is-in-jordan-canonical-form}
\end{equation}
where $a_1,\cdots,a_f\in\Bbb F$ are the distinct eigenvalues of
$\overline\beta$ and 
\begin{equation*}
 J_m(a)=\begin{bmatrix}
         a&1& &      & \\
          &a&1&      & \\
          & &a&\ddots& \\
          & & &\ddots&1\\
          & & &      &a
        \end{bmatrix}\in M_m(\Bbb F)
\end{equation*}
is a Jordan block. 
Let $W$ be the $\Bbb F$-vector subspace of the upper triangular
matrices in $M_n(\Bbb F)$. Let us denote by $X(\psi_{\beta})$ the set
of the one-dimensional representation $\psi$ of $K_{l-1}(W)$ such that 
$\psi|_{K_l}=\psi_{\beta}$. Then we have 
$$
 \sharp X(\psi_{\beta})=(K_{l-1}(W):K_l)=|W|=q^{n(n+1)/2},
$$
because we have $K_l\subset K_{l-1}(W)$ and 
$[K_{l-1}(W),K_{l-1}(W)]\subset\text{\rm Ker}(\psi_{\beta})$. 
An action of $k\in K_{l-1}$ on $\psi\in X(\psi_{\beta})$ is defined 
by $\psi^k(x)=\psi(k xk^{-1})$. 
Then \cite{Hill1995-2} shows, in the proof of Theorem 4.6, that the
isotropy subgroup of any $\psi\in X(\psi_{\beta})$ is 
$K_{l-1}(W)$ and that 
$$
 \sharp\left(X(\psi_{\beta})/K_{l-1}\right)
 =\sharp\left(X(\psi_{\beta})\right)/(K_{l-1}:K_{l-1}(W))
 =q^{n(n+1)/2-n(n-1)/2}=q^n.
$$
On the other hand $\varepsilon\in\mathcal{C}$ also acts on 
$\psi\in X(\psi_{\beta})$ by 
$\psi^{\varepsilon}(x)=\psi(\varepsilon x\varepsilon^{-1})$, and 
let us denote by $X_0(\psi_{\beta})$ the set of 
$\psi\in X(\psi_{\beta})$ such that $\psi^{\varepsilon}=\psi$ for all
$\varepsilon\in\mathcal{C}$. Then it is also shown in the proof of
Theorem 4.6 of \cite{Hill1995-2} that the set $X_0(\psi_{\beta})$ has
$q^n$ elements (see Remark
\ref{remark:argument-of-hill-works-also-in-equal-char-case} below).

The
subgroup of $GL_n$ consisting of the diagonal elements is denoted by
$D_n$. For any $T\in D_n(O)$ we define a one-dimensional
representation $\omega_T$ of $K_{l-1}(W)$ by 
$$
 \omega_T(x)=\tau\left(\varpi^{-1}\text{\rm tr}(XT)\right)
 =\tau\left(\varpi^{-1}\sum_{i=1}^nX_{ii}T_{ii}\right)
$$
for $x=1_n+\varpi^{l-1}X\pmod{\frak{p}^r}\in K_{l-1}(W)$. Here we
denote by $A_{ij}$ the $(i,j)$-element of a matrix $A$ in general. 
Then 
$\omega_T^{\varepsilon}=\omega_T$ for all $\varepsilon\in\mathcal{C}$ 
because $\mathcal{C}\pmod{\frak{p}}\subset W$. Furthermore 
$\omega_T=\omega_{T^{\prime}}$ for $T,T^{\prime}\in D_n(O)$ if and
only if $T\equiv T^{\prime}\pmod{\frak{p}}$. In particular the number
of distinct $\omega_T$ is $q^n$. So for any 
$\psi, \psi^{\prime}\in X_0(\psi_{\beta})$, there exists unique 
$T\pmod{\frak p}\in D_n(\Bbb F)$ such that 
$\psi^{\prime}\psi^{-1}=\omega_T$. Here we put
$$
 T=\begin{bmatrix}
    T_1&      &   \\
       &\ddots&   \\
       &      &T_f
   \end{bmatrix}
$$
with $T_i\in D_{n_i}(O)$. Then we have the following proposition;

\begin{prop}\rm\label{prop:k-l-1-conjugate-of-psi-in-x-0-psi-beta}
$\psi$ and $\psi^{\prime}$ belong to the same $K_{l-1}$-orbit in
$X(\psi_{\beta})$ if
and only if $\text{\rm tr}\,T_i\equiv 0\pmod{\frak p}$ for all
$i=1,\cdots,f$. 
\end{prop}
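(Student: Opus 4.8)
The plan is to compute the $K_{l-1}$-orbit of $\psi$ explicitly by conjugating $\psi_\beta$ by elements of $K_{l-1}$ and seeing which characters $\omega_T$ can arise as quotients $\psi^k\psi^{-1}$. First I would fix $\psi\in X_0(\psi_\beta)$ and, for $k = 1_n + \varpi^{l-1}Y \pmod{\frak p^r}\in K_{l-1}$ with $Y\in M_n(O)$, compute $\psi^k$ on a general element $x = 1_n+\varpi^{l-1}X\pmod{\frak p^r}\in K_{l-1}(W)$. Since $r = 2l-1$, a direct computation of $kxk^{-1}$ modulo $\frak p^r$ gives $kxk^{-1} = 1_n + \varpi^{l-1}X + \varpi^{2(l-1)}(YX - XY) \pmod{\frak p^r}$, and because $2(l-1) = r-1 = 2l-2$, the commutator term $YX-XY$ survives modulo $\frak p^r$ (it is killed in $\frak p^r$ only when $X\in\frak p$, but the leading order matters). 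This shows $\psi^k\psi^{-1}$ is the character $x\mapsto \psi_\beta(1_n + \varpi^{l-1}(YX-XY)\cdots)$-type expression, i.e. $x\mapsto\tau(\varpi^{-1}\mathrm{tr}((YX-XY)\overline\beta))=\tau(\varpi^{-1}\mathrm{tr}(X(\overline\beta Y - Y\overline\beta)))$, using $\overline\beta\in W$ and $X\in W$ so only diagonal entries of $\overline\beta Y - Y\overline\beta$ contribute against the diagonal part of $X$.

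Next I would identify $\psi^k\psi^{-1}$ with $\omega_T$ for the appropriate $T$: from the previous step, $\psi^k\psi^{-1} = \omega_T$ where $T$ is the diagonal part of $[\overline\beta, Y] = \overline\beta Y - Y\overline\beta$, up to the normalization $\varpi^{-1}$ vs $\varpi^{-l'}=\varpi^{-l}$ that the definitions carry (one should check the two characters $\psi_\beta$ on $K_l$ and $\omega_T$ on $K_{l-1}(W)$ are compatible here, which follows since $K_l\subset K_{l-1}(W)$ and $l'=l-1$). So the set of $T\pmod{\frak p}$ realizing $\psi'=\psi^k$ for $\psi'\in X_0(\psi_\beta)$ is exactly the image of the map $\overline Y\mapsto \mathrm{diag}([\overline\beta,\overline Y])$ from $M_n(\Bbb F)$ to $D_n(\Bbb F)$. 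Then $\psi,\psi'$ are in the same $K_{l-1}$-orbit iff the corresponding $T$ lies in this image. It remains to determine this image: since $\overline\beta$ is a direct sum of Jordan blocks $J_{n_i}(a_i)$ with distinct $a_i$, the commutant structure decouples block-diagonally, and one checks that $\mathrm{diag}[J_{n_i}(a_i),Z]$ ranges over exactly the trace-zero diagonal matrices in $D_{n_i}(\Bbb F)$ as $Z$ ranges over $M_{n_i}(\Bbb F)$ — the trace of any commutator is $0$, giving the inclusion, and a dimension count (or an explicit choice of $Z$ a suitable nilpotent) gives surjectivity onto the trace-zero part. Hence the image is $\{T : \mathrm{tr}\,T_i = 0 \text{ in }\Bbb F \text{ for all }i\}$, which is the claim.

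The main obstacle I expect is the bookkeeping around the off-diagonal entries: a priori $[\overline\beta,\overline Y]$ has nonzero entries off the diagonal, and $\psi^k\psi^{-1}$ is a character of $K_{l-1}(W)$, not just of its diagonal part, so I must verify that $\psi^k\psi^{-1}$ still lies in the subgroup generated by the $\omega_T$ — equivalently, that for $\psi,\psi'\in X_0(\psi_\beta)$ the quotient is forced to be some $\omega_T$ (this is already granted by the discussion preceding the proposition) and then that the $\mathcal C$-invariance of both $\psi$ and $\psi'$ constrains $Y$ so that the off-diagonal contribution to the character vanishes on $K_{l-1}(W)$. Concretely, one uses that $\omega_T$ is determined by $T\bmod\frak p$ together with the fact, stated above, that conjugation by $\mathcal C=(O_r[\beta_r])^\times$ fixes $\omega_T$; combining $\mathcal C$-invariance with the commutator computation pins down which $\overline Y$ are relevant and shows the resulting $T$ is exactly a trace-zero diagonal in each block. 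A secondary check is the sign/normalization constant relating $\tau(\varpi^{-l'}\cdot)$ and $\tau(\varpi^{-1}\cdot)$, which must be tracked carefully but introduces no essential difficulty since $l' = l-1$ and the relevant commutator sits in the right filtration step.
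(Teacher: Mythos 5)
Your proposal follows essentially the same route as the paper: compute $kxk^{-1}x^{-1}=1_n+\varpi^{2(l-1)}(YX-XY)\pmod{\frak{p}^r}$, translate $\psi^k\psi^{-1}=\omega_T$ into the condition $\mathrm{tr}(XT)\equiv\mathrm{tr}(X(\beta Y-Y\beta))\pmod{\frak p}$ for all $X$ with $\overline X\in W$, decompose into Jordan blocks, and obtain necessity from the vanishing trace of a commutator and sufficiency from an explicit subdiagonal choice of $Y$. One small correction: the off-diagonal bookkeeping you flag is resolved not by $\mathcal{C}$-invariance of $\psi$ and $\psi'$ (which only guarantees that $\psi'\psi^{-1}$ is some $\omega_T$) but simply by noting that pairing against $W$ sees only the lower-triangular-plus-diagonal part of $[\beta,Y]$, so the condition on $Y$ is $\beta Y-Y\beta\equiv T$ modulo strictly upper triangular matrices --- precisely the paper's block conditions \eqref{eq:block-condition-on-y}, which your explicit nilpotent $Z$ does satisfy.
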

\begin{proof}
Take any $k=1_n+\varpi^{l-1}Y\pmod{\frak{p}^r}\in K_{l-1}$. Then
$$
 k^{-1}=1_n-\varpi^{l-1}Y+\varpi^{2(l-1)}X^2\nnpmod{\frak{p}^r},
$$
so that 
$$
 kxk^{-1}x^{-1}
 =1_n+\varpi^{2(l-1)}(YX-XY)\nnpmod{\frak{p}^r}\in K_l
$$
for all $x=1_n+\varpi^{l-1}X\pmod{\frak{p}^r}\in K_{l-1}(W)$. Then 
$\psi^{\prime}=\psi^k$ if and only if 
\begin{align*}
 \omega_T(x)
 &=\psi(kxk^{-1}x^{-1})=\psi_{\beta}(kxk^{-1}x^{-1})\\
 &=\tau\left(\varpi^{-1}\text{\rm tr}((YX-XY)\beta)\right)
  =\tau\left(\varpi^{-1}\text{\rm tr}((\beta Y-Y\beta)X)\right)
\end{align*}
for all $x=1_n+\varpi^{l-1}X\pmod{\frak{p}^r}\in K_{l-1}(W)$, that is
$$
 \text{\rm tr}(XT)\equiv
 \text{\rm tr}(X(\beta Y-Y\beta))\pmod{\frak p}
$$
for all $X\in M_n(O)$ such that $X\pmod{\frak p}\in W$. This condition
is equivalent to
\begin{equation}
 \beta Y-Y\beta\equiv
 \begin{bmatrix}
     T_1   &      &        \\
           &\ddots&        \\
           &      & T_f
 \end{bmatrix}\pmod{\frak p}.
\label{eq:equivalent-condition-on-y}
\end{equation}
Decompose $Y$ into the blocks
$$
 T=\begin{bmatrix}
    Y_{11}&\cdots&Y_{1f}\\
    \vdots&\ddots&\vdots\\
    Y_{f1}&\cdots&Y_{ff}
   \end{bmatrix}
$$
where $Y_{ij}$ is a $n_i\times n_j$-matrix. Then the condition 
\eqref{eq:equivalent-condition-on-y} is equivalent to 
\begin{equation}
 J_{n_i}(a_i)Y_{ij}-Y_{ij}J_{n_j}(a_j)
 \equiv\begin{cases}
        T_i&:i=j,\\
         0 &:i>j
       \end{cases}
 \pmod{\frak p}
\label{eq:block-condition-on-y}
\end{equation}
for all $i,j=1,\cdots,f$ such that $i\geq j$. This means that 
$\text{\rm tr}(T_i)\equiv 0\pmod{\frak p}$ for all $i=1,\cdots,f$. 
After direct calculations, the conditions 
\eqref{eq:block-condition-on-y} are equivalent to 
$Y_{ij}=0$ if $i>j$ and
$$
 Y_{ii}\equiv
        \begin{bmatrix}
         \ast&       &      &              &        \\
          t_1& \ast  &      &                  &    \\
             &t_1+t_2&\ddots&                  &    \\
             &       &\ddots&    \ast          &    \\
             &  &      &t_1+\cdots+t_{n_i-1}&\ast
   \end{bmatrix}\pmod{\frak p}
$$
where $t_1,\cdots,t_{n_i}$ are the diagonal elements of $T_i$, so that 
$t_1+\cdots+t_{n_i}\equiv 0\pmod{\frak p}$.
\end{proof}

This proposition shows that the number of $K_{l-1}$-orbits in
$X(\psi_{\beta})$ which contains some elements of $X_0(\psi_{\beta})$
is $q^f$, where $f$ is the number of Jordan block in
$\overline\beta$. This means that the argument in the proof of Theorem
4.6 of \cite{Hill1995-2} works only in the case $f=n$, that is, the
case of semi-simple $\overline\beta$. 

\begin{rem}\label{remark:argument-of-hill-works-also-in-equal-char-case}
Although Hill works over $p$-adic fields in \cite{Hill1995-2}, the
arguments in the proof of Theorem 4.6 to show the equations 
$\sharp\left(X(\psi_{\beta})/K_{l-1}\right)=q^n$ and 
$\sharp X_0(\psi_{\beta})=q^n$ work well also in the case of equal
 characteristic local fields.
\end{rem}

\begin{rem}\label{remark:where-and-why-hill-fail}
As the referee points out, Hill's proof fails at p. 627 where he
claims that $\lambda\psi_a^{\prime}$ is a $C_G(\widehat a)$-stable
extension of 
$\psi_a$  to $H_a$. In fact this is not always the case, because 
the subgroup $N_2/K_l$ of $H_a/K_l$ is
not necessarily normalized by $C_G(\widehat a)$.
\end{rem}

\subsection{}
\label{subsec:suggestive-proposition}
We present here a proposition which is suggestive for the following
arguments.

\begin{prop}\rm\label{prop:siggestive-proposition}
Two elements $\psi$ and $\psi^{\prime}$ of $X(\psi_{\beta})$ belong to
the same $K_{l-1}$-orbit if and only if they have the same restriction
on $K_{l-1}(\Bbb F[\overline\beta])$.
\end{prop}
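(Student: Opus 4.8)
The plan is to convert the orbit condition into a statement of linear algebra over $\Bbb F$, using the commutator computation already carried out in the proof of Proposition~\ref{prop:k-l-1-conjugate-of-psi-in-x-0-psi-beta}. For $k=1_n+\varpi^{l-1}Y\pmod{\frak p^r}\in K_{l-1}$ and $x=1_n+\varpi^{l-1}X\pmod{\frak p^r}\in K_{l-1}(W)$ one has $kxk^{-1}x^{-1}=1_n+\varpi^{2(l-1)}(YX-XY)\pmod{\frak p^r}\in K_l$, so for every $\psi\in X(\psi_{\beta})$
$$
 \psi^k(x)\,\psi(x)^{-1}=\psi_{\beta}(kxk^{-1}x^{-1})
 =\tau\!\left(\varpi^{-1}\,\text{\rm tr}\bigl((\beta Y-Y\beta)X\bigr)\right),
$$
which is independent of $\psi$ and depends on $X,Y$ only through $\overline X,\overline Y$. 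Since $[K_{l-1},K_l]\subseteq K_{2l-1}=1$, this character of $K_{l-1}(W)$ is trivial on $K_l$, so $k\mapsto\psi^k\psi^{-1}$ induces a homomorphism $M_n(\Bbb F)\cong K_{l-1}/K_l\longrightarrow\widehat W$, where I identify the characters of $K_{l-1}(W)$ trivial on $K_l$ with the dual of $W\cong K_{l-1}(W)/K_l$.

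For the ``only if'' direction, if $\psi^{\prime}=\psi^k$ and $\overline X\in\Bbb F[\overline\beta]$ then $\overline X$ commutes with $\overline\beta$, so by cyclicity of the trace $\text{\rm tr}\bigl((\overline\beta\,\overline Y-\overline Y\,\overline\beta)\overline X\bigr)=0$; hence the displayed value is $1$ for $x\in K_{l-1}(\Bbb F[\overline\beta])$, i.e.\ $\psi^{\prime}$ and $\psi$ agree on $K_{l-1}(\Bbb F[\overline\beta])$. This half uses neither regularity nor the split hypothesis.

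For the converse, note that the homomorphism above is the one attached to the bilinear form
$$
 M_n(\Bbb F)\times W\longrightarrow\Bbb F,\qquad
 (\overline Y,\overline X)\longmapsto\text{\rm tr}\bigl((\overline\beta\,\overline Y-\overline Y\,\overline\beta)\overline X\bigr),
$$
whose right radical in $W$ is $\{\overline X\in W\mid\overline\beta\,\overline X=\overline X\,\overline\beta\}$, because the trace form on $M_n(\Bbb F)$ is non-degenerate. As $\overline\beta$ is regular its centralizer in $M_n(\Bbb F)$ equals $\Bbb F[\overline\beta]$, and $\Bbb F[\overline\beta]\subseteq W$ since $\overline\beta$ is in the Jordan form \eqref{eq:beta-mod-p-is-in-jordan-canonical-form}; so the right radical is precisely $\Bbb F[\overline\beta]$. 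Hence the image of $k\mapsto\psi^k\psi^{-1}$ in $\widehat W$ is the annihilator of $\Bbb F[\overline\beta]$, i.e.\ the group of characters of $K_{l-1}(W)$ that are trivial on $K_{l-1}(\Bbb F[\overline\beta])$. Consequently, for $\psi,\psi^{\prime}\in X(\psi_{\beta})$ the character $\psi^{\prime}\psi^{-1}$ belongs to this image --- equivalently $\psi^{\prime}=\psi^k$ for some $k\in K_{l-1}$ --- if and only if $\psi$ and $\psi^{\prime}$ have the same restriction to $K_{l-1}(\Bbb F[\overline\beta])$, as claimed.

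I do not expect a real obstacle here: the heart of the matter is just the commutator computation of Proposition~\ref{prop:k-l-1-conjugate-of-psi-in-x-0-psi-beta} together with the fact that for regular $\overline\beta$ the centralizer coincides with $\Bbb F[\overline\beta]$. The points needing care are the bookkeeping --- verifying triviality on $K_l$, so that the form genuinely lives on $W$, and keeping the radical and its annihilator on the correct sides --- and the inclusion $\Bbb F[\overline\beta]\subseteq W$, which is where the Jordan normal form enters. As a check, this description shows each $K_{l-1}$-orbit in $X(\psi_{\beta})$ is a coset of characters trivial on $K_{l-1}(\Bbb F[\overline\beta])$, hence of size $[W:\Bbb F[\overline\beta]]=q^{n(n+1)/2-n}=q^{n(n-1)/2}$, with $q^n$ orbits in all, consistent with the counts recalled in \S\ref{subsec:remark-on-result-of-hill}.
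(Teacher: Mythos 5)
Your proposal is correct and follows essentially the same route as the paper: the ``only if'' direction is the identical commutator computation $\psi^k(x)\psi(x)^{-1}=\widehat\tau\bigl(\text{\rm tr}((\beta Y-Y\beta)X)\bigr)$, which vanishes when $\overline X$ commutes with $\overline\beta$. For the converse the paper compares the orbit size $(K_{l-1}:K_{l-1}(W))=q^{n(n-1)/2}$ (quoting the stabilizer $K_{l-1}(W)$ from Hill) with the fiber size $(K_{l-1}(W):K_{l-1}(\Bbb F[\overline\beta]))=q^{n(n-1)/2}$, while you obtain the same count by identifying the image of $k\mapsto\psi^k\psi^{-1}$ with the annihilator of the right radical $\Bbb F[\overline\beta]$ --- the same duality, packaged so as not to invoke the stabilizer result.
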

\begin{proof}
Assume that $\psi^{\prime}=\psi^k$ with a 
$k=1_n+\varpi^{l-1}Y\pmod{\frak{p}^r}\in K_{l-1}$. Then we have
\begin{align*}
 \psi^{\prime}(x)\psi(x)^{-1}
 &=\psi(kxk^{-1}x^{-1})=\psi_{\beta}(kxk^{-1}x^{-1})\\
 &=\tau\left(\varpi^{-1}\text{\rm tr}((YX-XY)\beta)\right)=1
\end{align*}
for all 
$x=1_n+\varpi^{l-1}\pmod{\frak{p}^r}
 \in K_{l-1}(\Bbb F[\overline\beta])$. The number of
elements in the $K_{l-1}$-orbit of $\psi$ is 
$$
 (K_{l-1}:K_{l-1}(W))=(M_n(\Bbb F):W)=q^{n(n-1)/2}.
$$
On the other hand, the number of elements of $X(\psi_{\beta})$ which
have the same restriction as $\psi$ on 
$K_{l-1}(\Bbb F[\overline\beta])$ is
$$
 (K_{l-1}(W):K_{l-1}(\Bbb F[\overline\beta]))
 =(W:\Bbb F[\overline\beta])=q^{n(n-1)/2}.
$$
\end{proof}

\section{General description of the regular characters}
\label{sec:general-description-of-the-regular-character}
In this section, we will describe how Clifford theory works for our
problem concerned with $\beta\in M_n(O)$ such that 
$\overline\beta\in M_n(\Bbb F)$ is regular. 

\subsection{}
\label{sec:prop-on-rep-of-finite-group}
The arguments of the following sections are based upon an elementary
proposition on an irreducible representation of a finite group. For
the sake of completeness, we will include here the proposition. Its
proof is well-known and can be omitted. 
See \cite[Prop.8.3.3]{BushnellFrohlich}. 

Let $G$ be a finite group and $N$ a normal subgroup of $G$ such that
$G/N$ is commutative. Let $\psi$ be a group homomorphism of $N$ to 
$\Bbb C^{\times}$ such that $\psi(gng^{-1})=\psi(n)$ for all 
$g\in G$ and $n\in N$. Then the mapping $D_{\psi}$ of 
$G/N\times G/N$ to $\Bbb C^{\times}$ is well-defined by 
$$
 D_{\psi}(\overline x,\overline y)=\psi([x,y]),
$$
where $\overline x=x\pmod{N}\in G/N$ and $[x,y]=xyx^{-1}y^{-1}$. In
fact we have
\begin{align*}
 \psi([xn,ym])
 &=\psi(xnymn^{-1}x^{-1}m^{-1}y^{-1})\\
 &=\psi(xy(y^{-1}nymn^{-1}x^{-1}m^{-1}x)x^{-1}y^{-1})\\
 &=\psi(xy(y^{-1}nymn^{-1}x^{-1}m^{-1}x)(xy)^{-1}[x,y])\\
 &=\psi(y^{-1}ny)\psi(mn^{-1})\psi(x^{-1}m^{-1}x)\psi([x,y])
  =\psi([x,y])
\end{align*}
for all $x,y\in G$ and $m,n\in N$. Then we have 
$D_{\psi}(\overline y,\overline x)
 =D_{\psi}(\overline x,\overline y)^{-1}$ and 
$\overline x\mapsto D_{\psi}(\overline x,\ast)$ is a group
homomorphism of $G/N$ to the character group $(G/N)\sphat$ of the
commutative group $G/N$. Then we have

\begin{prop}\rm\label{prop:irred-rep-of-finite-group}
Suppose that $D_{\psi}$ is non-degenerate, that is, 
$\overline x\mapsto D_{\psi}(\overline x,\ast)$ is a group isomorphism
of $G/N$ onto $(G/N)\sphat$. Then $G$ has unique irreducible
representation $\pi_{\psi}$ up to equivalence 
such that $\langle\psi,\pi_{\psi}\rangle_N>0$. In this case 
$$
 \text{\rm Ind}_N^G\psi=\bigoplus^{\dim\pi_{\psi}}\pi_{\psi}
$$
and $\pi_{\psi}(n)$ is the homothety $\psi(n)$ for all $n\in N$.
\end{prop}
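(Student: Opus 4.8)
The plan is to reduce everything to the structure of a twisted group algebra of the finite abelian group $A=G/N$. First I would note, by Clifford theory, that any irreducible $\pi$ of $G$ with $\langle\psi,\pi\rangle_N>0$ satisfies $\pi|_N=(\dim\pi)\cdot\psi$: indeed $\pi|_N$ is a sum of $G$-conjugates of the one-dimensional character $\psi$, and each conjugate ${}^g\psi$ equals $\psi$ since $\psi(g^{-1}ng)=\psi(n)$. This already proves the last assertion, that $\pi(n)$ is the homothety $\psi(n)$. By Frobenius reciprocity it also gives $\langle\mathrm{Ind}_N^G\psi,\pi\rangle_G=\langle\psi,\pi|_N\rangle_N=\dim\pi$, whence $\mathrm{Ind}_N^G\psi=\bigoplus_i(\dim\pi_i)\,\pi_i$, the sum running over the a priori several equivalence classes $\pi_i$ of irreducibles of $G$ lying over $\psi$; comparing dimensions yields $(G:N)=\sum_i(\dim\pi_i)^2$.

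Next I would package the representations lying over $\psi$ as the modules over a single algebra. Choosing a section $s\colon A\to G$ of the quotient map with $s(1)=1$, the elements $s(\overline x)s(\overline y)s(\overline{xy})^{-1}$ lie in $N$, so $\alpha(\overline x,\overline y):=\psi\bigl(s(\overline x)s(\overline y)s(\overline{xy})^{-1}\bigr)$ is a $2$-cocycle on $A$ with values in $\Bbb C^{\times}$, and the representations of $G$ on which $N$ acts by the scalars $\psi$ are exactly the modules over the twisted group algebra $B=\Bbb C^{\alpha}[A]$, which has $\Bbb C$-dimension $(G:N)$. Writing $u_{\overline x}$ for the canonical basis of $B$, a direct computation (using that $A$ is commutative and that the antisymmetrization of $\alpha$ is $D_{\psi}$) gives the relation $u_{\overline x}u_{\overline y}=D_{\psi}(\overline x,\overline y)\,u_{\overline y}u_{\overline x}$ for all $\overline x,\overline y\in A$.

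The key step is then to read off the structure of $B$ from the non-degeneracy of $D_{\psi}$. Since $A$ is finite, $B$ is semisimple (Maschke's averaging argument applies verbatim to twisted group algebras over $\Bbb C$), so $B\cong\prod_i M_{n_i}(\Bbb C)$, $\Bbb C$ being algebraically closed, and the dimension of the center of $B$ equals the number of factors. On the other hand, comparing $z\,u_{\overline y}$ with $u_{\overline y}\,z$ for $z=\sum_{\overline x}c_{\overline x}u_{\overline x}$ and using the commutation relation shows that $c_{\overline x}=0$ unless $D_{\psi}(\overline x,\overline y)=1$ for all $\overline y$, i.e.\ unless $\overline x$ lies in the radical of $D_{\psi}$; non-degeneracy makes this radical trivial, so $Z(B)=\Bbb C\cdot u_1$ is one-dimensional. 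Hence $B\cong M_d(\Bbb C)$ with $d^2=\dim_{\Bbb C}B=(G:N)$, and a full matrix algebra has a unique simple module up to isomorphism, of dimension $d$. Translating back, $G$ has a unique irreducible representation $\pi_{\psi}$ over $\psi$, with $\dim\pi_{\psi}=d$; plugging this into the decomposition of the first paragraph gives $\mathrm{Ind}_N^G\psi=\bigoplus^{\dim\pi_{\psi}}\pi_{\psi}$, consistently with $\dim\mathrm{Ind}_N^G\psi=(G:N)=d^2$.

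I expect the only substantive point to be the identification $Z(B)=\Bbb C$, forcing the semisimple algebra $B$ to be a single matrix block — this is the finite abstract form of the Stone--von Neumann uniqueness theorem — while the Clifford-theoretic reductions and the cocycle bookkeeping are routine.
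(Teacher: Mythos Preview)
Your argument is correct. The paper itself omits the proof entirely, declaring it well-known and referring the reader to Bushnell--Fr\"ohlich, Prop.~8.3.3, so there is no in-paper proof to compare against. Your route via the twisted group algebra $\Bbb C^{\alpha}[G/N]$ --- using the commutation relation $u_{\overline x}u_{\overline y}=D_{\psi}(\overline x,\overline y)\,u_{\overline y}u_{\overline x}$ to show the center is one-dimensional, whence the semisimple algebra is a single matrix block --- is one of the standard and cleanest proofs of this finite Stone--von~Neumann statement; the Clifford and Frobenius steps you open with are exactly right and already yield the homothety claim and the decomposition of $\text{\rm Ind}_N^G\psi$ once uniqueness is known.
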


\subsection{}
\label{sec:schur-multiplier}
Fix a $\beta\in M_n(O)$ such that $\overline\beta\in M_n(\Bbb F)$ is
regular. The purpose of this subsection is to describe the set 
$\text{\rm Irr}(G_r\mid\psi_{\beta})$ (Theorem
\ref{th:description-of-regular-character-under-hypothesis}) by means
of Clifford theory. Since 
$$
 [K_{l-1}(\Bbb F[\overline\beta]),K_{l-1}(\Bbb F[\overline\beta])]
 \subset[K_{l-1}(W),K_{l-1}(W)]
 \subset\text{\rm Ker}(\psi_{\beta})
$$
the set $Y(\psi_{\beta})$ of the group homomorphisms $\psi$ of 
$K_{l-1}(\Bbb F[\overline\beta])$ to 
$\Bbb C^{\times}$ such that $\psi|_{K_l}=\psi_{\beta}$ is not
empty. Take a $\psi\in Y(\psi_{\beta})$. 

Define an alternating bilinear form $\langle\,,\rangle_{\overline\beta}$
on $M_n(\Bbb F)$ by 
$$
 \langle X\nnpmod{\frak p},Y\nnpmod{\frak p}\rangle_{\overline\beta}
 =\text{\rm tr}\left((XY-YX)\beta\right)
 \nnpmod{\frak p}.
$$ 
By virtue of the regularity of $\overline\beta\in M_n(\Bbb F)$, we have
$$
 \{X\in M_n(\Bbb F)\mid\langle X,Y\rangle_{\overline\beta}=0\,
    \text{\rm for all $Y\in M_n(\Bbb F)$}\}
 =\Bbb F[\overline\beta].
$$
In other word, the $\Bbb F$-vector space 
$\Bbb V_{\beta}=M_n(\Bbb F)/\Bbb F[\overline\beta]$ is a symplectic
$\Bbb F$-space with respect to a symplectic form 
$(\dot X,\dot Y)\mapsto\langle X,Y\rangle_{\overline\beta}$ where 
$\dot X=X\pmod{\Bbb F[\overline\beta]}\in\Bbb V_{\beta}$ with 
$X\in M_n(\Bbb F)$.

We will apply Proposition \ref{prop:irred-rep-of-finite-group} to the
groups $G=K_{l-1}$ and $N=K_{l-1}(\Bbb F[\overline\beta])$. 
First of all, the mapping 
$1_n+\varpi^{l-1}X\npmod{\frak{p}^r}\mapsto X\npmod{\frak p}$ 
induces an isomorphism of $G/N$ onto $\Bbb V_{\beta}$. 
In particular $G/N$ is commutative.  Then, for any
$x=1_n+\varpi^{l-1}X\npmod{\frak{p}^r}
 \in K_{l-1}(\Bbb F[\overline\beta])$ and 
$k=1_n+\varpi^{l-1}Y\npmod{\frak{p}^r}\in K_{l-1}$, we have
$$
 \psi(kxk^{-1})\psi(x)^{-1}=\psi_{\beta}(kxk^{-1}x^{-1})
 =\widehat\tau\left(\text{\rm tr}((YX-XY)\beta\right)
 =1,
$$
that is $\psi(kxk^{-1})=\psi(x)$. Now define 
$D_{\psi}(\overline x,\overline y)=\psi([x,y])$ for 
$\overline x,\overline y\in G/N$. If we write 
$$
 x=1_n+\varpi^{l-1}X\nnpmod{\frak{p}^r},\;\;
 y=1_n+\varpi^{l-1}Y\nnpmod{\frak{p}^r}\in K_{l-1},
$$
then we have
$$
 D_{\psi}(\overline x,\overline y)
 =\psi_{\beta}(xyx^{-1}y^{-1})
 =\tau\left(\varpi^{-1}
   \langle\overline X,\overline Y\rangle_{\overline\beta}\right),
$$
so that $D_{\psi}$ is non-degenerate. Hence, by Proposition 
\ref{prop:irred-rep-of-finite-group}, there exists a unique, up to
equivalence, irreducible representation $\pi_{\psi}$ of $K_{l-1}$ with
representation space $H_{\psi}$ such that 
$\langle\psi,\pi_{\psi}\rangle_{K_{l-1}(\Bbb F[\overline\beta])}>0$. 
Furthermore, for any $x\in K_{l-1}(\Bbb F[\overline\beta])$, the
operator $\pi_{\psi}(x)$ is the homothety of $\psi(x)$. 

Take any $\varepsilon\in\mathcal{C}$. Then for any
$$
 x=1_n+\varpi^{l-1}X\nnpmod{\frak{p}^r}
  \in K_{l-1}(\Bbb F[\overline\beta])
$$
we have
$$
 \varepsilon x\varepsilon^{-1}x^{-1}
 \equiv
 1_n+\varpi^{l-1}(\varepsilon X\varepsilon^{-1}-X)
    +\varpi^{2(l-1)}(X^2-\varepsilon X\varepsilon^{-1}X)
 \pmod{\frak{p}^r}.
$$
Here $x\in K_{l-1}(\Bbb F[\overline\beta])$ means that 
$X\npmod{\frak p}\in\Bbb F[\overline\beta]$ so that 
$\varepsilon X\varepsilon^{-1}\equiv X\npmod{\frak p}$. Then
$\varepsilon x\varepsilon^{-1}x^{-1}\in K_l$ and we have
\begin{align*}
 &\psi(\varepsilon x\varepsilon^{-1})\psi(x)^{-1}
  =\psi_{\beta}(\varepsilon x\varepsilon^{-1}x^{-1})\\
 &=\tau\left(\varpi^{-l}\text{\rm tr}(
              (\varepsilon X\varepsilon^{-1}-X)\beta)
    +\varpi^{-1}\text{\rm tr}
              ((X^2-\varepsilon X\varepsilon^{-1}X)\beta)\right)
  =1,
\end{align*}
that is $\psi(\varepsilon x\varepsilon^{-1})=\psi(x)$ for all 
$x\in K_{l-1}(\Bbb F[\overline\beta])$. Then 
$\pi_{\psi}^{\varepsilon}(k)=\pi_{\psi}(\varepsilon k\varepsilon^{-1})$ defines an
irreducible representation of $K_{l-1}$ on $H_{\psi}$ 
equivalent to $\pi_{\psi}$. So there
exists a $U(\varepsilon)\in GL_{\Bbb C}(H_{\psi})$ such that 
\begin{equation}
 \pi_{\psi}(\varepsilon k\varepsilon^{-1})
 =U(\varepsilon)\circ\pi_{\psi}(k)\circ U(\varepsilon)^{-1}
\label{eq:conjugate-formula-of-pi-psi}
\end{equation}
for all $k\in K_{l-1}$.
Let us denote by $c_U$ the two cocycle associated with $U$, that is 
$c_U(\varepsilon,\eta)\in\Bbb C^{\times}$ such that
\begin{equation}
 U(\varepsilon)\circ U(\eta)
 =c_U(\varepsilon,\eta)\cdot U(\varepsilon\eta)
\label{eq:2-cocycle-of-u}
\end{equation}
for all $\varepsilon, \eta\in\mathcal{C}$. Then the cohomology class 
$c(\psi)\in H^2(\mathcal{C},\Bbb C^{\times})$ of $c_U$ is 
independent of the choice of $U(\varepsilon)$ for each 
$\varepsilon\in\mathcal{C}$. Here $\mathcal{C}$ acts trivially 
on $\Bbb C^{\times}$. 

Now we will propose the following hypothesis;

\begin{hypo}\label{hyp:schur-multiplier-c-psi-is-trivial-for-all-psi}
The Schur multiplier $c(\psi)\in H^2(\mathcal{C},\Bbb C^{\times})$ is
trivial for all $\psi\in Y(\psi_{\beta})$.
\end{hypo}

In section \ref{sec:regular-character-via-weil-rep-over-finite-field}, 
when $\text{\rm ch}\,\Bbb F\neq 2$, we will describe the 2-cocycle
$c_U$ much more explicitly (the formula 
\eqref{eq:explicit-formula-of-schur-multiplier-associated-with-rep}) 
by means of the Schr\"odinger
representation and the Weil representation associated with the
symplectic space $\Bbb V_{\beta}$ over finite field $\Bbb F$, and will 
show that 
\begin{enumerate}
\item Hypothesis
      \ref{hyp:schur-multiplier-c-psi-is-trivial-for-all-psi} is valid
      if the characteristic polynomial of $\overline\beta\in M_n(\Bbb F)$  is
      separable (Theorem
      \ref{th:original-hypothesis-is-valid-if-chara-poly-is-separable}), 
\item we have enough evidences to believe that Hypothesis 
      \ref{hyp:schur-multiplier-c-psi-is-trivial-for-all-psi} is valid
      if $\overline\beta$ is split and $\text{\rm ch}\,\Bbb F$ is big
      enough (Theorem
      \ref{th:original-hypothesis-is-valid-in-split-small-n} and the
      remark after the theorem).
\end{enumerate}

Now we will assume Hypothesis
\ref{hyp:schur-multiplier-c-psi-is-trivial-for-all-psi}. 
Then there exists an 
extension $\widetilde\pi_{\psi}$ of $\pi_{\psi}$ to
$\mathcal{C}K_{l-1}$. In fact, we can assume that $U$ is a group
homomorphism of $\mathcal{C}$ to $GL_{\Bbb C}(H_{\psi})$. For any 
$\varepsilon\in\mathcal{C}\cap K_{l-1}$, there exists a 
$\theta(\varepsilon)\in\Bbb C^{\times}$ such that 
$\pi_{\psi}(\varepsilon)
 =\theta(\varepsilon)\cdot U(\varepsilon)$. 
Then $\theta$ is a character of $\mathcal{C}\cap K_{l-1}$ which has an
extension $\Theta$ to the commutative group $\mathcal{C}$. Now 
$\widetilde\pi_{\psi}(\varepsilon k)
 =\Theta(\varepsilon)\cdot U(\varepsilon)\circ\pi_{\psi}(k)$ 
($\varepsilon\in\mathcal{C}, k\in K_{l-1}$) is a well-defined
 extension of $\pi_{\psi}$ to $\mathcal{C}K_{l-1}$. 

Our purpose is to determine the set 
$\text{\rm Irr}(G_r(\psi_{\beta})\mid\psi_{\beta})$. Take any 
$\sigma\in\text{\rm Irr}(G_r(\psi_{\beta})\mid\psi_{\beta})$. Then 
$$
 \sigma\hookrightarrow
 \text{\rm Ind}_{K_l}^{\mathcal{C}K_{l-1}}\psi_{\beta}
 =\text{\rm Ind}_{K_{l-1}}^{\mathcal{C}K_{l-1}}\left(
   \text{\rm Ind}_{K_l}^{K_{l-1}}\psi_{\beta}\right).
$$
We have 
$$
 \text{\rm Ind}_{K_l}^{K_{l-1}}\psi_{\beta}
 =\text{\rm Ind}_{K_{l-1}(\Bbb F[\overline\beta])}^{K_{l-1}}\left(
   \text{\rm Ind}_{K_l}^{K_{l-1}(\Bbb F[\overline\beta])}
    \psi_{\beta}\right)
$$
and 
$$
 \text{\rm Ind}_{K_l}^{K_{l-1}(\Bbb F[\overline\beta])}\psi_{\beta}
 =\bigoplus_{\psi\in Y(\psi_{\beta})}\psi
$$
because 
$[K_{l-1}(\Bbb F[\overline\beta]),K_{l-1}(\Bbb F[\overline\beta])]
 \subset\text{\rm Ker}(\psi_{\beta})$. For each 
$\psi\in Y(\psi_{\beta})$, we have
$$
 \text{\rm Ind}_{K_{l-1}(\Bbb F[\overline\beta])}^{K_{l-1}}\psi
 =\bigoplus^{\dim\pi_{\psi}}\pi_{\psi}
$$
by Proposition \ref{prop:irred-rep-of-finite-group}. Hence there
exists a $\psi\in Y(\psi_{\beta})$ such that 
$\sigma\hookrightarrow
 \text{\rm Ind}_{K_{l-1}}^{\mathcal{C}K_{l-1}}\pi_{\psi}$. Now we have 
$$
 \text{\rm Ind}_{K_{l-1}}^{\mathcal{C}K_{l-1}}\pi_{\psi}
 =\bigoplus_{\chi\in\text{\rm Irr}(\mathcal{C}/\mathcal{C}\cap K_{l-1})}
   \chi\otimes\widetilde\pi_{\psi}
$$
where $\text{\rm Irr}\left(\mathcal{C}/\mathcal{C}\cap K_{l-1}\right)$
is the character group of the abelian group 
$\mathcal{C}/\mathcal{C}\cap K_{l-1}$ which is identified with the
 one-dimensional representations of $\mathcal{C}K_{l-1}$ trivial on 
$K_{l-1}$. Hence $\{\chi\otimes\widetilde\pi_{\psi}\}_{\chi}$ are the
 extensions of $\pi_{\psi}$ to $\mathcal{C}K_{l-1}$ and 
$\sigma=\chi\otimes\widetilde\pi_{\psi}$ for some 
$\chi\in\text{\rm Irr}\left(\mathcal{C}/\mathcal{C}\cap K_{l-1}\right)$. 
Then the Clifford theory gives 

\begin{thm}\label{th:description-of-regular-character-under-hypothesis}
Under the Hypothesis 
\ref{hyp:schur-multiplier-c-psi-is-trivial-for-all-psi}, a bijection 
$$
 \text{\rm Irr}\left(\mathcal{C}/\mathcal{C}\cap K_{l-1}\right)
                                  \times Y(\psi_{\beta})\to
 \text{\rm Irr}(G_r\mid\psi_{\beta})
$$
is given by 
$(\chi,\psi)\mapsto
 \text{\rm Ind}_{\mathcal{C}K_{l-1}}^{G_r}(\chi\otimes\widetilde\pi_{\psi})$.
\end{thm}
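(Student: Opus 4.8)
The plan is to assemble the ingredients already prepared in this subsection into a single bijection, in two steps: first identify $\text{\rm Irr}(\mathcal{C}K_{l-1}\mid\psi_{\beta})$ with the product set, and then transport along the Clifford correspondence recalled in the introduction. Since $r=2l-1$ we have $l^{\prime}=l-1$, so $G_r(\psi_{\beta})=\left(O_r[\beta_r]\right)^{\times}K_{l^{\prime}}=\mathcal{C}K_{l-1}$ by \cite[Cor. 3.9]{Hill1995-2}, and $\sigma\mapsto\text{\rm Ind}_{\mathcal{C}K_{l-1}}^{G_r}\sigma$ is a bijection of $\text{\rm Irr}(\mathcal{C}K_{l-1}\mid\psi_{\beta})$ onto $\text{\rm Irr}(G_r\mid\psi_{\beta})$. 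Hence it suffices to show that $(\chi,\psi)\mapsto\chi\otimes\widetilde\pi_{\psi}$ is a bijection of $\text{\rm Irr}\left(\mathcal{C}/\mathcal{C}\cap K_{l-1}\right)\times Y(\psi_{\beta})$ onto $\text{\rm Irr}(\mathcal{C}K_{l-1}\mid\psi_{\beta})$, and then compose with $\text{\rm Ind}_{\mathcal{C}K_{l-1}}^{G_r}$.

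I would first check that the map lands in $\text{\rm Irr}(\mathcal{C}K_{l-1}\mid\psi_{\beta})$. Each $\chi\otimes\widetilde\pi_{\psi}$ is irreducible, because its restriction to $K_{l-1}$ is the irreducible $\pi_{\psi}$ of Proposition \ref{prop:irred-rep-of-finite-group}. Since $\chi$ is trivial on $K_{l-1}\supset K_l$ and $\text{\rm Res}_{K_l}\pi_{\psi}$ is the homothety $\psi_{\beta}$ (as $K_l\subset K_{l-1}(\Bbb F[\overline\beta])$, where $\pi_{\psi}$ acts by the homothety $\psi$, and $\psi|_{K_l}=\psi_{\beta}$), we get $\langle\psi_{\beta},\chi\otimes\widetilde\pi_{\psi}\rangle_{K_l}=\dim\pi_{\psi}>0$. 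Surjectivity is precisely the content of the computation preceding the theorem: any $\sigma\in\text{\rm Irr}(\mathcal{C}K_{l-1}\mid\psi_{\beta})$ embeds in $\text{\rm Ind}_{K_l}^{\mathcal{C}K_{l-1}}\psi_{\beta}$, and decomposing this induced module in the three stages $K_l\subset K_{l-1}(\Bbb F[\overline\beta])\subset K_{l-1}\subset\mathcal{C}K_{l-1}$ shows $\sigma\hookrightarrow\text{\rm Ind}_{K_{l-1}}^{\mathcal{C}K_{l-1}}\pi_{\psi}=\bigoplus_{\chi}\chi\otimes\widetilde\pi_{\psi}$ for some $\psi\in Y(\psi_{\beta})$, hence $\sigma=\chi\otimes\widetilde\pi_{\psi}$ for some $\chi$.

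The step that needs a genuine argument is injectivity. Suppose $\chi\otimes\widetilde\pi_{\psi}$ and $\chi^{\prime}\otimes\widetilde\pi_{\psi^{\prime}}$ are equivalent, and fix an intertwining isomorphism $T$. Restricting to $K_{l-1}(\Bbb F[\overline\beta])$ makes both representations homotheties, by $\psi$ and by $\psi^{\prime}$ respectively (the characters $\chi,\chi^{\prime}$ being trivial there), so $\psi=\psi^{\prime}$ and both representations act on the same space $H_{\psi}$ with the same $\widetilde\pi_{\psi}$. Restricting $T$ further to $K_{l-1}$, on which $\chi$ and $\chi^{\prime}$ are trivial, $T$ intertwines $\pi_{\psi}$ with itself, so by Schur's lemma it is a nonzero scalar, which we may normalise to the identity. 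Then $\chi(g)\widetilde\pi_{\psi}(g)=\chi^{\prime}(g)\widetilde\pi_{\psi}(g)$ for every $g\in\mathcal{C}K_{l-1}$, and since $\widetilde\pi_{\psi}(g)$ is invertible this forces $\chi=\chi^{\prime}$. Thus the map is a bijection onto $\text{\rm Irr}(\mathcal{C}K_{l-1}\mid\psi_{\beta})$, and composing with $\text{\rm Ind}_{\mathcal{C}K_{l-1}}^{G_r}$ gives the theorem. I expect the main obstacle to be purely organisational — keeping straight the three nested subgroups and where each object ($\pi_{\psi}$, $\widetilde\pi_{\psi}$, the scalar $\psi$ on $K_{l-1}(\Bbb F[\overline\beta])$) enters; the one non-formal input, the existence of $\widetilde\pi_{\psi}$, is supplied entirely by Hypothesis \ref{hyp:schur-multiplier-c-psi-is-trivial-for-all-psi} and is used only once.
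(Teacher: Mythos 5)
Your proposal is correct and follows the same route as the paper: identify $G_r(\psi_{\beta})=\mathcal{C}K_{l-1}$ via Hill's result (since $l^{\prime}=l-1$), reduce by the Clifford correspondence to $\text{\rm Irr}(\mathcal{C}K_{l-1}\mid\psi_{\beta})$, and obtain surjectivity from the stage-wise decomposition of $\text{\rm Ind}_{K_l}^{\mathcal{C}K_{l-1}}\psi_{\beta}$. The paper leaves the injectivity to ``Clifford theory''; your Schur-lemma argument (reading off $\psi$ from the homothety on $K_{l-1}(\Bbb F[\overline\beta])$, then cancelling $\widetilde\pi_{\psi}$ to get $\chi=\chi^{\prime}$) is a correct and standard way to fill that in.
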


\section{Schur multiplier associated with symplectic space over finite
  field}
\label{sec:schur-multiplier-of-symplectic-space-over-finite-field}
In this section the finite field $\Bbb F$ is supposed to be of odd
characteristic. Our purpose is to define and to study 
a Schur multiplier associated with a symplectic space over $\Bbb
F$. The results of this section are used in the next section to
describe the Schur multiplier in Hypothesis 
\ref{hyp:schur-multiplier-c-psi-is-trivial-for-all-psi}.  
Fix a regular $\overline\beta\in M_n(\Bbb F)$. 

\subsection{}
\label{subsec:general-setting-of-finite-symplectic-schur-multiplier}
Due to the regularity of $\overline\beta\in M_n(\Bbb F)$,  we have
$$
 \{X\in M_n(\Bbb F)\mid X\overline\beta=\overline\beta X\}=\Bbb F[\overline\beta].
$$
Then $\Bbb V_{\beta}=M_n(\Bbb F)/\Bbb F[\overline\beta]$ is a symplectic
space over $\Bbb F$ with respect to the symplectic form 
$$
 \langle\dot X,\dot Y\rangle_{\overline\beta}=\text{\rm tr}((XY-YX)\overline\beta).
$$
Here we denote $\dot X=X\pmod{\Bbb F[\overline\beta]}\in\Bbb V_{\beta}$ for 
$X\in M_n(\Bbb F)$. 
Take a character $\rho:\Bbb F[\overline\beta]\to\Bbb C^{\times}$ of the
additive group $\Bbb F[\overline\beta]$. Let 
$v\mapsto[v]$ be a $\Bbb F$-linear splitting of the exact sequence
\begin{equation}
 0\to\Bbb F[\overline\beta]\to M_n(\Bbb F)\to\Bbb V_{\beta}\to 0
\label{eq:canonical-exact-seq-of-v-beta}
\end{equation}
of $\Bbb F$-vector space. In other word let us choose a 
$\Bbb F$-vector subspace $V\subset M_n(\Bbb F)$ such that 
$M_n(\Bbb F)=V\oplus\Bbb F[\overline\beta]$, and put 
$v=[v]\pmod{\Bbb F[\overline\beta]}$ with $[v]\in V$ 
for $v\in\Bbb V_{\beta}$. For any 
$\varepsilon\in\Bbb F[\overline\beta]^{\times}$ and 
$v=\dot X\in\Bbb V_{\beta}$, put 
$\varepsilon^{-1}v\varepsilon
 =\varepsilon^{-1}X\varepsilon\pmod{\Bbb F[\overline\beta]}\in\Bbb V_{\beta}$
 and 
$$
 \gamma(v,\varepsilon)
 =\varepsilon^{-1}[v]\varepsilon-[\varepsilon^{-1}v\varepsilon]
 \in\Bbb F[\overline\beta].
$$
Since $v\mapsto\rho(\gamma(v,\varepsilon))$ is an additive character of
$\Bbb V_{\beta}$, there exists uniquely a 
$v_{\varepsilon}\in\Bbb V_{\beta}$ such that 
$$
 \rho(\gamma(v,\varepsilon))
 =\widehat\tau\left(\langle v,v_{\varepsilon}\rangle_{\overline\beta}\right)
$$
for all $v\in\Bbb V_{\beta}$. Then, for any 
$\varepsilon, \eta\in\Bbb F[\overline\beta]^{\times}$, we have
\begin{equation}
 v_{\varepsilon\eta}
 =\varepsilon v_{\eta}\varepsilon^{-1}+v_{\varepsilon}
\label{eq:multiplication-formula-of-v-epsilon}
\end{equation}
because 
$$
 \gamma(v,\varepsilon\eta)
 =\gamma(v,\varepsilon)+\gamma(\varepsilon^{-1}v\varepsilon,\eta)
$$
for all $v\in\Bbb V_{\beta}$. Put
$$
 c_{\overline\beta,\rho}(\varepsilon,\eta)
 =\widehat\tau\left(2^{-1}\langle v_{\varepsilon},
                    v_{\varepsilon\eta}\rangle_{\overline\beta}\right)
$$
for $\varepsilon, \eta\in\Bbb F[\overline\beta]^{\times}$. Then the relation 
\eqref{eq:multiplication-formula-of-v-epsilon} shows that 
$c_{\overline\beta,\rho}\in Z^2(\Bbb F[\overline\beta]^{\times},\Bbb C^{\times})$ is a
      2-cocycle with trivial action of $\Bbb F[\overline\beta]^{\times}$ on 
      $\Bbb C^{\times}$. Furthermore we have

\begin{prop}
\label{prop:schur-multiplier-associated-with-finite-symplectic-space}
 The cohomology class 
      $[c_{\overline\beta,\rho}]\in H^2(\Bbb F[\overline\beta]^{\times},\Bbb C^{\times})$ is
      independent of the choice of the $\Bbb F$-linear splitting 
      $v\mapsto[v]$.
\end{prop}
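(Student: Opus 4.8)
The plan is to compare the two-cocycles arising from two different $\mathbb{F}$-linear splittings $v\mapsto[v]$ and $v\mapsto[v]'$ and show that they differ by a coboundary. So let $V,V'\subset M_n(\mathbb{F})$ be two complementary subspaces to $\mathbb{F}[\overline\beta]$, giving two splittings. For each $v\in\mathbb{V}_\beta$ the difference $[v]-[v]'$ lies in $\mathbb{F}[\overline\beta]$ and $v\mapsto [v]-[v]'$ is $\mathbb{F}$-linear, so there is a fixed linear map $\delta:\mathbb{V}_\beta\to\mathbb{F}[\overline\beta]$ with $[v]=[v]'+\delta(v)$. The first step is to track how $\gamma(v,\varepsilon)$ changes: writing $\gamma'$ for the primed version, one gets
$$
 \gamma(v,\varepsilon)-\gamma'(v,\varepsilon)
 =\bigl(\varepsilon^{-1}\delta(v)\varepsilon-\delta(\varepsilon^{-1}v\varepsilon)\bigr),
$$
which again lies in $\mathbb{F}[\overline\beta]$. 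Applying $\rho$ and using the defining relation for $v_\varepsilon$, this translates into a shift $v_\varepsilon\mapsto v_\varepsilon+w_\varepsilon$ for some $w_\varepsilon\in\mathbb{V}_\beta$ determined by $\rho\bigl(\varepsilon^{-1}\delta(v)\varepsilon-\delta(\varepsilon^{-1}v\varepsilon)\bigr)=\widehat\tau(\langle v,w_\varepsilon\rangle_{\overline\beta})$.

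The second step is to determine the multiplicative behaviour of $w_\varepsilon$. The cocycle identity $\gamma(v,\varepsilon\eta)=\gamma(v,\varepsilon)+\gamma(\varepsilon^{-1}v\varepsilon,\eta)$ holds for $\gamma'$ as well, hence the difference $d(v,\varepsilon):=\varepsilon^{-1}\delta(v)\varepsilon-\delta(\varepsilon^{-1}v\varepsilon)$ satisfies the same cocycle-type identity $d(v,\varepsilon\eta)=d(v,\varepsilon)+d(\varepsilon^{-1}v\varepsilon,\eta)$; feeding this through $\rho$ and the nondegeneracy of $\langle\,,\,\rangle_{\overline\beta}$ gives, exactly as in the derivation of \eqref{eq:multiplication-formula-of-v-epsilon}, the relation $w_{\varepsilon\eta}=\varepsilon w_\eta\varepsilon^{-1}+w_\varepsilon$. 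Now I substitute $v_\varepsilon\mapsto v_\varepsilon+w_\varepsilon$ into the formula for $c_{\overline\beta,\rho}(\varepsilon,\eta)=\widehat\tau\bigl(2^{-1}\langle v_\varepsilon,v_{\varepsilon\eta}\rangle_{\overline\beta}\bigr)$ and expand bilinearly. The result is a product of $c_{\overline\beta,\rho}$ (the primed version) with several correction terms; using $v_{\varepsilon\eta}=\varepsilon v_\eta\varepsilon^{-1}+v_\varepsilon$ and $w_{\varepsilon\eta}=\varepsilon w_\eta\varepsilon^{-1}+w_\varepsilon$, together with the $\mathbb{F}[\overline\beta]^\times$-invariance of the symplectic form (i.e.\ $\langle\varepsilon X\varepsilon^{-1},\varepsilon Y\varepsilon^{-1}\rangle_{\overline\beta}=\langle X,Y\rangle_{\overline\beta}$), the goal is to reorganize all correction terms into the form $b(\varepsilon)b(\eta)b(\varepsilon\eta)^{-1}$ for an explicit function $b:\mathbb{F}[\overline\beta]^\times\to\mathbb{C}^\times$, which exhibits the difference of cocycles as a coboundary.

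The main obstacle I expect is the bookkeeping in that last expansion: the quadratic expression $\langle v_\varepsilon+w_\varepsilon,\;\varepsilon(v_\eta+w_\eta)\varepsilon^{-1}+v_\varepsilon+w_\varepsilon\rangle_{\overline\beta}$ produces cross terms such as $\langle w_\varepsilon,\varepsilon v_\eta\varepsilon^{-1}\rangle_{\overline\beta}$ and $\langle v_\varepsilon,\varepsilon w_\eta\varepsilon^{-1}\rangle_{\overline\beta}$ that are not individually symmetric, and one must use the alternating property plus the substitution $w_{\varepsilon\eta}=\varepsilon w_\eta\varepsilon^{-1}+w_\varepsilon$ to pair them up correctly so that the non-coboundary pieces cancel. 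The candidate coboundary function will be something like $b(\varepsilon)=\widehat\tau\bigl(2^{-1}\langle w_\varepsilon,\text{(something depending only on }\varepsilon)\rangle_{\overline\beta}\bigr)$, possibly combined with a term $\widehat\tau\bigl(\tfrac14\langle w_\varepsilon,w_\varepsilon\rangle\bigr)=1$; identifying it precisely is the only delicate point, and once it is found the verification that $c_{\overline\beta,\rho}/c'_{\overline\beta,\rho}=\partial b$ is a routine (if lengthy) computation in $\mathbb{C}^\times$. Note the factor $2^{-1}$ is available since $\mathrm{ch}\,\mathbb{F}$ is odd, though here it only enters through $\widehat\tau$'s argument and does not itself obstruct anything.
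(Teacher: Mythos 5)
Your outline follows the same route as the paper --- compare the two splittings, write $v'_{\varepsilon}=v_{\varepsilon}+w_{\varepsilon}$, and exhibit the resulting change in the $2$-cocycle as a coboundary --- and your steps 1 and 2 are correct. But the plan for the decisive step 3 has a genuine gap: you propose to reorganize the correction terms using only the relation $w_{\varepsilon\eta}=\varepsilon w_{\eta}\varepsilon^{-1}+w_{\varepsilon}$. That relation alone cannot suffice. Indeed $v_{\varepsilon}$ itself satisfies exactly the same relation \eqref{eq:multiplication-formula-of-v-epsilon}, and the $2$-cocycle $\widehat\tau\left(2^{-1}\langle v_{\varepsilon},v_{\varepsilon\eta}\rangle_{\overline\beta}\right)$ built from it is precisely $c_{\overline\beta,\rho}$, whose triviality is what the rest of the section labors to establish in special cases and which remains conjectural in general (Conjecture \ref{conj:schur-multiplier-is-trivial-in-splitting-case}). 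So a family satisfying only the crossed-homomorphism identity can produce a non-coboundary term $\widehat\tau\left(2^{-1}\langle w_{\varepsilon},w_{\varepsilon\eta}\rangle_{\overline\beta}\right)$, and the symmetric cross term $\langle v_{\varepsilon},\varepsilon w_{\eta}\varepsilon^{-1}\rangle_{\overline\beta}+\langle w_{\varepsilon},\varepsilon v_{\eta}\varepsilon^{-1}\rangle_{\overline\beta}$ likewise does not reduce to a coboundary by bilinearity and the two cocycle identities alone (only the antisymmetric combination does).

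The missing ingredient is the paper's first move: since $[v]-[v]'\in\Bbb F[\overline\beta]$, the map $v\mapsto\rho([v]-[v]')$ is an additive character of $\Bbb V_{\beta}$, hence by nondegeneracy of $\langle\,,\rangle_{\overline\beta}$ it is represented by a single fixed vector $\delta\in\Bbb V_{\beta}$. Together with the commutativity of $\Bbb F[\overline\beta]$ (so that $\varepsilon^{-1}\delta(v)\varepsilon=\delta(v)$ in your notation), this upgrades $w_{\varepsilon}$ from a crossed homomorphism to a \emph{principal} one, $w_{\varepsilon}=\delta-\varepsilon^{-1}\delta\varepsilon$. With that explicit form every correction term collapses, and the paper writes down the primitive directly: $\alpha(\varepsilon)=\widehat\tau\left(2^{-1}\langle v'_{\varepsilon}-v_{\varepsilon^{-1}},\delta\rangle_{\overline\beta}\right)$, which pairs against the fixed vector $\delta$ rather than against $w_{\varepsilon}$ as your candidate $b$ does. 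Your $w_{\varepsilon}$, as you defined it, does in fact have this principal form, but you never observe or use it, and without it your step 3 cannot be completed.
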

\begin{proof}
Take another $\Bbb F$-linear splitting $v\mapsto[v]^{\prime}$ 
with respect to which
we will define $\gamma^{\prime}(v,\varepsilon)\in\Bbb F[\overline\beta]$ 
and $v^{\prime}_{\varepsilon}\in\Bbb V_{\beta}$ as above. 
Then there exists a $\delta\in\Bbb V_{\beta}$ such that 
$\rho([v]-[v]^{\prime})
 =\widehat\tau\left(\langle v,\delta\rangle_{\overline\beta}\right)$ 
for all $v\in\Bbb V_{\beta}$. We have 
$v_{\varepsilon}^{\prime}
 =v_{\varepsilon}+\delta-\varepsilon^{-1}\delta\varepsilon$ for all 
$\varepsilon\in\Bbb F[\overline\beta]^{\times}$. So if we put
$$
 \alpha(\varepsilon)
 =\widehat\tau\left(2^{-1}\langle 
    v_{\varepsilon}^{\prime}-v_{\varepsilon^{-1}},\delta\rangle_{\overline\beta}
            \right)
$$
for $\varepsilon\in\Bbb F[\overline\beta]^{\times}$, then we have
$$
 \widehat\tau\left(2^{-1}\langle v_{\varepsilon}^{\prime},
           v_{\varepsilon\eta}^{\prime}\rangle_{\overline\beta}\right)
 =\widehat\tau\left(2^{-1}\langle v_{\varepsilon},
           v_{\varepsilon\eta}\rangle_{\overline\beta}\right)\cdot
  \alpha(\eta)\alpha(\varepsilon\eta)^{-1}\alpha(\varepsilon)
$$
for all $\varepsilon, \eta\in\Bbb F[\overline\beta]^{\times}$.
\end{proof}

Let $H(\Bbb V_{\beta})=\Bbb V_{\beta}\times\Bbb C^1$ be the Heisenberg
group associated with the symplectic $\Bbb F$-space $\Bbb
V_{\beta}$. Here $\Bbb C^1$ is the multiplicative group of the complex
number with absolute value one. 
The group operation on $H(\Bbb V_{\beta})$ is defined by 
$$
 (u,s)\cdot(v,t)
 =(u+v,s\cdot t\cdot
    \widehat\tau\left(2^{-1}\langle u,v\rangle_{\overline\beta}\right)).
$$
There exists uniquely up to isomorphism an irreducible representation
$(\pi_{\overline\beta},H_{\overline\beta})$ of $H(\Bbb V_{\beta})$ such that 
$\pi_{\overline\beta}(0,s)=s$ for all $(0,s)\in H(\Bbb V_{\beta})$. We
call the representation $\pi_{\overline\beta}$ the Schr\"odinger representation
of the Heisenberg group $H(\Bbb V_{\beta})$. For
any $\sigma\in Sp(\Bbb V_{\beta})$, the mapping 
$(u,s)\mapsto(u\sigma,s)$ is an automorphism of $H(\Bbb V_{\beta})$ so
that there exists a $T(\sigma)\in GL_{\Bbb C}(H_{\overline\beta})$ such that 
$$
 \pi_{\overline\beta}(u\sigma,s)
 =T(\sigma)^{-1}\circ\pi_{\overline\beta}(u,s)\circ T(\sigma)
$$
for all $(u,s)\in H(\Bbb V_{\beta})$. For any 
$\sigma, \sigma^{\prime}\in Sp(\Bbb V_{\beta})$, there exists a 
$c_T(\sigma,\sigma^{\prime})\in\Bbb C^{\times}$ such that
$$
 T(\sigma)\circ T(\sigma^{\prime})
 =c_T(\sigma,\sigma^{\prime})\cdot T(\sigma\sigma^{\prime}).
$$
Then $c_T\in Z^2(Sp(\Bbb V_{\beta}),\Bbb C^{\times})$ is a 
2-cocycle with trivial action of $Sp(\Bbb V_{\beta})$ on 
$\Bbb C^{\times}$, and the cohomology class 
$[c_T]\in H^2(Sp(\Bbb V_{\beta}),\Bbb C^{\times})$ is independent of
the choice of each $T(\sigma)$. We have a group homomorphism 
$\varepsilon\mapsto\sigma_{\varepsilon}$ of $\Bbb F[\overline\beta]^{\times}$
to $Sp(\Bbb V_{\beta})$ defined by 
$\sigma_{\varepsilon}(v)=\varepsilon^{-1}v\varepsilon$. 
So a 2-cocycle $c_T\in Z^2(\Bbb F[\overline\beta]^{\times},\Bbb C^{\times})$
and a cohomology class $[c_T]\in H^2(\Bbb F[\overline\beta]^{\times},\Bbb C^{\times})$
is defined by 
$c_T(\varepsilon,\eta)=c_T(\sigma_{\varepsilon},\sigma_{\eta})$. 
Put 
$$
 U(\varepsilon)
 =\pi_{\overline\beta}(v_{\varepsilon},0)\circ T(\sigma_{\varepsilon})
 \in GL_{\Bbb C}(H_{\overline\beta})
$$
for $\varepsilon\in\Bbb F[\overline\beta]^{\times}$. Then 
\eqref{eq:multiplication-formula-of-v-epsilon} gives
$$
 U(\varepsilon)\circ U(\eta)
 =c_{\overline\beta,\rho}(\varepsilon,\eta)\cdot c_T(\varepsilon,\eta)\cdot
  U(\varepsilon\eta)
$$
for all $\varepsilon, \eta\in\Bbb F[\overline\beta]^{\times}$. 
Now we will investigate the validity of the following hypothesis

\begin{hypo}\label{hypo:tirviality-of-schur-multiplier}
The cohomology class 
$[c_{\overline\beta,\rho}c_T]\in H^2(\Bbb F[\overline\beta]^{\times},\Bbb C^{\times})$ is trivial 
for all additive characters $\rho$ of $\Bbb F[\overline\beta]$.
\end{hypo}

\begin{rem}\label{remark:assertion-hypothesis-really-assert}
If $\rho$ is trivial, then $c_{\overline\beta,\rho}(\varepsilon,\eta)=1$ for
all $\varepsilon, \eta\in\Bbb F[\overline\beta]^{\times}$. So Hypothesis 
\ref{hypo:tirviality-of-schur-multiplier} asserts that the cohomology
class $[c_T]\in H^2(\Bbb F[\overline\beta]^{\times},\Bbb C^{\times})$ is
trivial and that the cohomology classes 
$[c_{\overline\beta,\rho}]\in H^2(\Bbb F[\overline\beta]^{\times},\Bbb C^{\times})$ are
trivial for all $\rho$ .
\end{rem}

\subsection{}
\label{subsec:schrodinger-rep-associated-with-polarization}
The Schr\"odinger representation of the Heisenberg group 
$H(\Bbb V_{\beta})$ is realized as follows. Let 
$\Bbb V_{\beta}=\Bbb W^{\prime}\oplus\Bbb W$ be a polarization of the
symplectic $\Bbb F$-space $\Bbb V_{\beta}$. Let $L^2(\Bbb W^{\prime})$
be the complex vector space of the complex-valued functions $f$ on 
$\Bbb W^{\prime}$ with norm 
$|f|^2=\sum_{w\in\Bbb W^{\prime}}|f(w)|^2$. The Schr\"odinger
representation $\pi_{\overline\beta}$ of $H(\Bbb V_{\beta})$ is realized 
on $L^2(\Bbb W^{\prime})$ by
$$
 \left(\pi_{\overline\beta}(u,s)f\right)(w)
 =s\cdot\widehat\tau\left(2^{-1}\langle u_-,u_+\rangle_{\overline\beta}
                        +\langle w,u_+\rangle_{\overline\beta}\right)\cdot
  f(w+u_-)
$$
for $(u,s)\in H(\Bbb V_{\beta})$ where 
$u=u_-+u_+$ with $u_-\in\Bbb W^{\prime}, u_+\in\Bbb W$. 
We call it the Schr\"odinger representation associated with the
polarization $\Bbb V_{\beta}=\Bbb W^{\prime}\oplus\Bbb W$.

The method of \cite{Lion-Vergne} and \cite{Rao} enable us to give an
explicit description of 2-cocycle 
$c_T\in Z^2(\Bbb F[\overline\beta]^{\times},\Bbb C^{\times})$. Each element 
$\sigma\in Sp(\Bbb V_{\beta})$ is denoted by blocks 
$\begin{pmatrix}
  a&b\\
  c&d
 \end{pmatrix}$ with
respect to the polarization $\Bbb V_{\beta}=\Bbb W^{\prime}\oplus\Bbb W$ 
(that is $a\in\text{\rm End}_{\Bbb F}(\Bbb W^{\prime}), 
b\in\text{\rm Hom}_{\Bbb F}(\Bbb W^{\prime},\Bbb W)$ etc). Then a unitary
operator $T_{\widehat\tau}(\sigma)$ of $L^2(\Bbb W^{\prime})$ is defined by
\begin{align*}
 &\left(T_{\widehat\tau}(\sigma)f\right)(w)\\
 =&\text{\rm const.}\times\sum_{v\in\Bbb W/\text{\rm Ker c}}
    f(wa+vc)\cdot\widehat\tau\left(
      2^{-1}\langle wa+vc,wb+vd\rangle_{\overline\beta}
      -2^{-1}\langle w,v\rangle_{\overline\beta}\right)
\end{align*}
with a positive real number $\text{\rm consts.}$  and  we have
$$
 \pi_{\widehat\tau}(u\sigma,s)
 =T_{\widehat\tau}(\sigma)^{-1}\circ\pi_{\widehat\tau}(u,s)\circ 
  T_{\widehat\tau}(\sigma).
$$
Defined in \cite{Lion-Vergne} and \cite{Rao} is the generalized Weil
constant $\gamma_{\widehat\tau}(Q)\in\Bbb C^{\times}$ associated with a 
$\Bbb F$-quadratic form $Q$ having the following properties;
\begin{enumerate}
\item if $Q$ is an orthogonal sum of two $\Bbb F$-quadratic forms
         $Q_1$ and $Q_2$, then 
         $\gamma_{\widehat\tau}(Q)=\gamma_{\widehat\tau}(Q_1)\cdot\gamma(Q_2)$,
\item $\gamma_{\widehat\tau}(0)=1$, and 
      if $Q$ is a regular $\Bbb F$-quadratic form on a $\Bbb
      F$-vector space $X$
$$
 \gamma_{\widehat\tau}(Q)
 =q^{-2^{-1}\dim_{\Bbb F}X}\sum_{x\in X}\widehat\tau(Q(x)).
$$
\end{enumerate}
Particularly for $a\in\Bbb F^{\times}$, we have
$$
 \gamma_{\widehat\tau}(a)
 =q^{-1/2}\sum_{x\in\Bbb F}\widehat\tau(ax^2)
 =\left(\frac a{\Bbb F}\right)\cdot q^{-1/2}
  \sum_{x\in\Bbb F^{\times}}\left(\frac x{\Bbb F}\right)\cdot\widehat\tau(x)
$$
where 
$$
 \left(\frac a{\Bbb F}\right)
 =\begin{cases}
   1&:\text{\rm if $a$ is a square in $\Bbb F$},\\
  -1&:\text{\rm if $a$ is not a square in $\Bbb F$}
  \end{cases}
$$
is the Legendre symbol with respect to the field $\Bbb F$. 
Now for every 
$\sigma, \sigma^{\prime}\in Sp(\Bbb V_{\beta})$, let us denote by 
$Q_{\sigma,\sigma^{\prime}}$ the $\Bbb F$-quadratic form on 
$\Bbb W^{\prime}\times\Bbb W^{\prime}\times\Bbb W^{\prime}$ defined by
$$
 (u,v,w)\mapsto\langle u,v\sigma^{\prime}\rangle_{\overline\beta}
              +\langle v,w\sigma\rangle_{\overline\beta}
              -\langle u,w\sigma\sigma^{\prime}\rangle_{\overline\beta}.
$$
Put 
$c_{\widehat\tau}(\sigma,\sigma^{\prime})
 =\gamma_{\widehat\tau}(Q_{\sigma,\sigma^{\prime}})^{-1}$, then 
\cite{Lion-Vergne} and \cite{Rao} give the formula
$$
 T_{\widehat\tau}(\sigma)\circ T_{\widehat\tau}(\sigma^{\prime})
 =c_{\widehat\tau}(\sigma,\sigma^{\prime})\cdot 
  T_{\widehat\tau}(\sigma\sigma^{\prime}).
$$
So if we put $T(\varepsilon)=T_{\widehat\tau}(\sigma_{\varepsilon})$ for
$\varepsilon\in\Bbb F[\overline\beta]^{\times}$, then we have 
$$
 c_T(\varepsilon,\eta)=c_{\widehat\tau}(\sigma_{\varepsilon},\sigma_{\eta})
 =\gamma_{\widehat\tau}(Q_{\varepsilon,\eta})^{-1}
$$
for $\varepsilon, \eta\in\Bbb F[\overline\beta]^{\times}$. Here we 
put $Q_{\varepsilon,\eta}=Q_{\sigma_{\varepsilon},\sigma_{\eta}}$. Put
$$
 B(\Bbb V_{\beta})
 =\left\{\begin{pmatrix}
          a&b\\
          0&d
         \end{pmatrix}\in Sp(\Bbb V_{\beta})\right\}
$$
which is a subgroup of $Sp(\Bbb V_{\beta})$. For any 
$\sigma=\begin{pmatrix}
         a&b\\
         0&d
        \end{pmatrix}\in B(\Bbb V_{\beta})$, we have
$$
 \left(T_{\widehat\tau}(\sigma)f\right)(w)
 =\widehat\tau\left(2^{-1}\langle wa,wb\rangle_{\overline\beta}\right)
   \cdot f(wa)
$$
for $f\in L^2(\Bbb W^{\prime})$, and 
$\sigma\mapsto T_{\widehat\tau}(\sigma)$ is a group homomorphism of
$B(\Bbb V_{\beta})$ in $GL_{\Bbb C}(L^2(\Bbb W^{\prime}))$. Hence 
$c_{\widehat\tau}(\sigma,\sigma^{\prime})=1$ for all 
$\sigma, \sigma^{\prime}\in B(\Bbb V_{\beta})$. In other word, if 
$\Bbb W\sigma_{\varepsilon}=\Bbb W, \Bbb W\sigma_{\eta}=\Bbb W$ for 
$\varepsilon, \eta\in\Bbb F[\overline\beta]^{\times}$, then 
$c_T(\varepsilon,\eta)=1$. 

\subsection{}
\label{subsec:symplectic-schur-multiplier-reduced-to-basic-cases}
Let $\chi_{\overline\beta}(t)=\prod_{i=1}^rp_i(t)^{e_i}$ 
be the irreducible decomposition of the characteristic polynomial 
$\chi_{\overline\beta}(t)$ of $\overline\beta$ with distinct
irreducible polynomials $p_i(t)$ in $\Bbb F[t]$ ($e_i>0$). So we can assume
that $\overline\beta$ is of the form
$$
 \overline\beta=\begin{bmatrix}
         \overline\beta_1&      &         \\
                &\ddots&         \\
                &      &\overline\beta_r
        \end{bmatrix}
$$
where $\overline\beta_i\in M_{n_i}(t)$ is a regular element with characteristic
polynomial $p_i(t)^{e_i}$, so that $n_i=e_i\deg p_i(t)$. Then we have
$$
 \Bbb F[\overline\beta]
 =\left\{\begin{bmatrix}
          X_1&      &         \\
             &\ddots&         \\
             &      &X_r
         \end{bmatrix}\biggm| X_i\in\Bbb F[\overline\beta_i]\right\}.
$$
Define $\Bbb F$-vector subspaces $L_{\beta}, M_{\beta}$ of $M_n(\Bbb F)$ by
$$
 L_{\beta}
 =\left\{\begin{bmatrix}
          X_1&      &         \\
             &\ddots&         \\
             &      &X_r
         \end{bmatrix}\biggm| X_i\in M_{n_i}(\Bbb F)\right\}
$$
and
$$
 M_{\beta}
 =\left\{\begin{bmatrix}
          O_{n_1}&\cdots&\ast   \\
          \vdots &\ddots&\vdots \\
          \ast   &\cdots&O_{n_r}
         \end{bmatrix}\in M_n(\Bbb F)\right\}
$$
respectively.  Then we have 
$$
 \Bbb V_{\beta}=L_{\beta}/\Bbb F[\overline\beta]\oplus M_{\beta}
 =\bigoplus_{i=1}^r\Bbb V_{\beta_i}\oplus M_{\beta}.
$$
Here $M_{\beta}$ is a symplectic $\Bbb F$-space with respect to 
the symplectic form 
$\langle X,Y\rangle_{\overline\beta}=\text{\rm tr}((XY-YX)\overline\beta)$ with a
polarization $M_{\beta}=M_{\beta}^-\oplus M_{\beta}^+$ defined by 
$$
 M_{\beta}^+
 =\left\{\begin{bmatrix}
          O_{n_1}&\cdots&\ast     \\
          \vdots &\ddots&\vdots   \\
             0   &\cdots&O_{n_r}
         \end{bmatrix}\in M_n(\Bbb F)\right\},
$$
and
$$
 M_{\beta}^-
 =\left\{\begin{bmatrix}
          O_{n_1}&\cdots&    0    \\
          \vdots &\ddots&\vdots  \\
          \ast   &\cdots&O_{n_r}
         \end{bmatrix}\in M_n(\Bbb F)\right\}.
$$
Let $(\pi_{\overline\beta,M},L^2(M_{\beta}^-))$ be the Schr\"odinger
representation of the Heisenberg group 
$H(M_{\beta})=M_{\beta}\times\Bbb C^1$ associated with the polarization 
$M_{\beta}=M_{\beta}^-\oplus M_{\beta}^+$. Any 
$\varepsilon\in\Bbb F[\overline\beta]^{\times}$ defines an element 
$X\mapsto\varepsilon^{-1}X\varepsilon$ of $Sp(M_{\beta})$ which keeps
the subspace $M_{\beta}^{\pm}$ stable. So define a 
$T_M(\varepsilon)\in GL_{\Bbb C}(L^2(M_{\beta}^-))$ by
$$
 \left(T_M(\varepsilon)f\right)(W)
 =f\left(\varepsilon^{-1}W\varepsilon\right).
$$
Then $\varepsilon\mapsto T_M(\varepsilon)$ is a group homomorphism of
$\Bbb F[\overline\beta]^{\times}$ such that
$$
 \pi_{\overline\beta,M}(\varepsilon^{-1}X\varepsilon,s)
 =T_M(\varepsilon)^{-1}\circ\pi_{\overline\beta,M}(X,s)\circ
  T_M(\varepsilon)
$$
for all $\varepsilon\in\Bbb F[\overline\beta]^{\times}$ and 
$(X,s)\in H(M_{\beta})$. 

The symplectic form
$\langle\,,\rangle_{\overline\beta}$ on $\Bbb V_{\beta}$ induces the symplectic
form $\langle\,,\rangle_{\overline\beta_i}$ on $\Bbb V_{\beta_i}$. Let us
denote $(\pi_{\overline\beta_i},H_i)$ the Schr\"odinger representation of the
Heisenberg group $H(\Bbb V_{\beta_i})$,
and choose $T_i(\varepsilon)\in GL_{\Bbb C}(H_i)$ for each 
$\varepsilon\in\Bbb F[\overline\beta_i]^{\times}$ such that 
$$
 \pi_{\overline\beta_i}(\varepsilon^{-1}v\varepsilon,s)
 =T_i(\varepsilon)^{-1}\circ\pi_{\overline\beta_i}(v,s)\circ
  T_i(\varepsilon)
$$
for all $(v,s)\in H(\Bbb V_{\beta_i})$. Let 
$c_i\in Z^2(\Bbb F[\overline\beta_i]^{\times},\Bbb C^{\times})$ be the
2-cocycle associated with $T_i$, that is
$$
 T_i(\varepsilon)\circ T_i(\eta)
 =c_i(\varepsilon,\eta)\cdot T_i(\varepsilon\eta)
$$
for all $\varepsilon, \eta\in\Bbb F[\overline\beta_i]^{\times}$. Now we have a
surjective group homomorphism 
$$
 ((X,s),(v_1,s_1),\cdots,(v_r,s_r))
 \mapsto\left(X+\sum_{i=1}^rv_i,s\cdot\prod_{i=1}^rs_i\right)
$$
of $H(M_{\beta})\times\prod_{i=1}^rH(\Bbb V_{\beta_i})$ onto 
$H(\Bbb V_{\beta})$, and the tensor product 
$\pi_{\overline\beta,M}\otimes\bigotimes_{i=1}^r\pi_{\overline\beta_i}$, 
which is trivial on the
kernel of the group homomorphism, induces the Schr\"odinger
representation of $H(\Bbb V_{\beta})$ on
the space $L^2(M_{\beta}^-)\otimes\bigotimes_{i=1}^rH_i$. Put 
$$
 T(\varepsilon)=T_M(\varepsilon)\otimes\bigotimes_{i=1}^rT_i(\varepsilon_i)
$$ 
for $\varepsilon=\begin{bmatrix}
                  \varepsilon_1&      &         \\
                               &\ddots&         \\
                               &      &\varepsilon_r
                 \end{bmatrix}\in\Bbb F[\overline\beta]^{\times}$, and we have
\begin{equation}
 c_T(\varepsilon,\eta)
 =\prod_{i=1}^rc_i(\varepsilon_i,\eta_i)
\label{eq:reduction-of-c-t-to-basic-case}
\end{equation}
for all $\varepsilon, \eta\in\Bbb F[\overline\beta]^{\times}$. 

Let us assume that the $\Bbb F$-vector subspace $V$ of
$M_n(\Bbb F)$ associated with the $\Bbb F$-linear splitting
$v\mapsto[v]$ contains $M_{\beta}$. Then we have 
$[\varepsilon]\in L_{\beta}$ for all 
$\varepsilon\in\Bbb F[\overline\beta]^{\times}$ and
\begin{equation}
 c_{\overline\beta,\rho}(\varepsilon,\eta)
 =\prod_{i=1}^rc_{\overline\beta_i,\rho}(\varepsilon_i,\eta_i)
\label{eq:reduction-of-c-beta-rho-to-basic-case}
\end{equation}
for all 
$\varepsilon=\begin{bmatrix}
              \varepsilon_1&      &         \\
                           &\ddots&         \\
                           &      &\varepsilon_r
             \end{bmatrix}, 
 \eta=\begin{bmatrix}
              \eta_1&      &         \\
                    &\ddots&         \\
                    &      &\eta_r
             \end{bmatrix}\in\Bbb F[\overline\beta]^{\times}$. 

\begin{prop}\label{prop:shur-multiplier-is-almost-coboundary}
Let a group homomorphism $\widetilde\rho:L_{\beta}\to\Bbb C^{\times}$
be an extension of $\rho:\Bbb F[\overline\beta]\to\Bbb C^{\times}$. Then we
have
$$
 c_{\overline\beta,\rho}(\varepsilon,\eta)
 =\widetilde\rho(2^{-1}\varepsilon[v_{\eta}]\varepsilon^{-1})\cdot
  \widetilde\rho(2^{-1}[v_{\varepsilon\eta}])^{-1}\cdot
  \widetilde\rho(2^{-1}[v_{\varepsilon}])
$$
for all $\varepsilon, \eta\in\Bbb F[\overline\beta]^{\times}$.
\end{prop}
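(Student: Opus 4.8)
The plan is to compute $c_{\overline\beta,\rho}(\varepsilon,\eta)=\widehat\tau\bigl(2^{-1}\langle v_{\varepsilon},v_{\varepsilon\eta}\rangle_{\overline\beta}\bigr)$ directly by unwinding the multiplication formula \eqref{eq:multiplication-formula-of-v-epsilon} and re-expressing the pairing $\langle\,,\rangle_{\overline\beta}$ through $\rho$. The key point is that the defining relation $\rho(\gamma(v,\varepsilon))=\widehat\tau(\langle v,v_{\varepsilon}\rangle_{\overline\beta})$ lets us trade any $\widehat\tau$ of a pairing against $v_{\varepsilon}$ for a value of $\rho$ on $\gamma(\cdot,\varepsilon)\in\Bbb F[\overline\beta]$, and $\rho$ extends (by hypothesis of the proposition) to a character $\widetilde\rho$ of the whole diagonal block space $L_{\beta}$. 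Since $[v_{\eta}]\in V$ and we have arranged $M_{\beta}\subset V$, in fact $[v_{\varepsilon}]\in L_{\beta}$ for all $\varepsilon$, so $\widetilde\rho$ may legitimately be applied to the elements $2^{-1}\varepsilon[v_{\eta}]\varepsilon^{-1}$, $2^{-1}[v_{\varepsilon\eta}]$, $2^{-1}[v_{\varepsilon}]$ appearing on the right-hand side; note $\varepsilon L_{\beta}\varepsilon^{-1}=L_{\beta}$ because $\varepsilon\in\Bbb F[\overline\beta]^{\times}$ is block-diagonal.

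First I would rewrite \eqref{eq:multiplication-formula-of-v-epsilon} as $v_{\varepsilon\eta}-v_{\varepsilon}=\varepsilon v_{\eta}\varepsilon^{-1}$ and use bilinearity to split $\langle v_{\varepsilon},v_{\varepsilon\eta}\rangle_{\overline\beta}=\langle v_{\varepsilon},\varepsilon v_{\eta}\varepsilon^{-1}\rangle_{\overline\beta}+\langle v_{\varepsilon},v_{\varepsilon}\rangle_{\overline\beta}$; the second term vanishes by alternation, so $c_{\overline\beta,\rho}(\varepsilon,\eta)=\widehat\tau\bigl(2^{-1}\langle v_{\varepsilon},\varepsilon v_{\eta}\varepsilon^{-1}\rangle_{\overline\beta}\bigr)$. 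Next I would bring in $\gamma$. By the conjugation-covariance of $\langle\,,\rangle_{\overline\beta}$ under $\Bbb F[\overline\beta]^{\times}$ (it is invariant under $w\mapsto\varepsilon^{-1}w\varepsilon$ on both slots simultaneously, since $\mathrm{tr}$ is conjugation-invariant and $\overline\beta$ commutes with $\varepsilon$), one gets $\langle v_{\varepsilon},\varepsilon v_{\eta}\varepsilon^{-1}\rangle_{\overline\beta}=\langle\varepsilon^{-1}v_{\varepsilon}\varepsilon,v_{\eta}\rangle_{\overline\beta}$. Then apply the defining relation for $v_{\eta}$: $\widehat\tau(\langle u,v_{\eta}\rangle_{\overline\beta})=\rho(\gamma(u,\eta))$ for any $u$. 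Taking $u=\varepsilon^{-1}v_{\varepsilon}\varepsilon$ and remembering the factor $2^{-1}$, this yields $c_{\overline\beta,\rho}(\varepsilon,\eta)=\rho\bigl(2^{-1}\gamma(\varepsilon^{-1}v_{\varepsilon}\varepsilon,\eta)\bigr)$ once one checks the cocycle-type identity $\gamma(cv,\eta)=c\,\gamma(v,\eta)$... more precisely, since $\gamma(v,\eta)=\eta^{-1}[v]\eta-[\eta^{-1}v\eta]$ is $\Bbb F$-linear in $v$ and $\rho$ is additive, one can push the $2^{-1}$ inside.

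The remaining work, which I expect to be the main obstacle, is the bookkeeping that turns $\gamma(\varepsilon^{-1}v_{\varepsilon}\varepsilon,\eta)$ into the stated three-term expression in $\widetilde\rho$. Write $w=v_{\varepsilon}$ and expand $\gamma(\varepsilon^{-1}w\varepsilon,\eta)=\eta^{-1}[\varepsilon^{-1}w\varepsilon]\eta-[\eta^{-1}\varepsilon^{-1}w\varepsilon\eta]$. The trick is that $[\,\cdot\,]$ and $\varepsilon^{-1}(\cdot)\varepsilon$ nearly commute: $\varepsilon^{-1}[w]\varepsilon-[\varepsilon^{-1}w\varepsilon]=\gamma(w,\varepsilon)\in\Bbb F[\overline\beta]$, which $\rho$ sees, whereas on $\Bbb F[\overline\beta]$ conjugation by $\varepsilon$ or $\eta$ is trivial. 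Substituting $[\varepsilon^{-1}w\varepsilon]=\varepsilon^{-1}[w]\varepsilon-\gamma(w,\varepsilon)$ and $[\eta^{-1}\varepsilon^{-1}w\varepsilon\eta]=(\varepsilon\eta)^{-1}[w](\varepsilon\eta)-\gamma(w,\varepsilon\eta)$ into $\gamma(\varepsilon^{-1}w\varepsilon,\eta)$, and using that $\widetilde\rho$ is a character on $L_{\beta}$ invariant under conjugation by the block-diagonal $\varepsilon,\eta$ (hence $\widetilde\rho(\eta^{-1}\varepsilon^{-1}[w]\varepsilon\eta)=\widetilde\rho([w])$, and likewise the $\gamma$-terms are unaffected since they lie in $\Bbb F[\overline\beta]$ where $\rho=\widetilde\rho$ and conjugation is trivial), the $[w]$-contributions telescope away and one is left with a combination of $\gamma(w,\varepsilon)$, $\gamma(w,\varepsilon\eta)$, and — after also invoking $v_{\varepsilon\eta}=\varepsilon v_\eta\varepsilon^{-1}+v_\varepsilon$ once more to identify $[v_{\varepsilon\eta}]$ and $\varepsilon[v_\eta]\varepsilon^{-1}$ modulo $\Bbb F[\overline\beta]$ — exactly $\widetilde\rho(2^{-1}\varepsilon[v_\eta]\varepsilon^{-1})\,\widetilde\rho(2^{-1}[v_{\varepsilon\eta}])^{-1}\,\widetilde\rho(2^{-1}[v_\varepsilon])$. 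Care is needed that each term is evaluated on an element actually lying in $L_{\beta}$ (so that $\widetilde\rho$ applies), which is guaranteed by the standing assumption $M_{\beta}\subset V$ together with $\varepsilon L_{\beta}\varepsilon^{-1}=L_{\beta}$; this is the only place the block structure from subsection~\ref{subsec:symplectic-schur-multiplier-reduced-to-basic-cases} is used.
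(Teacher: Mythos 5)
Your opening moves match the paper's: from \eqref{eq:multiplication-formula-of-v-epsilon} and alternation you get $c_{\overline\beta,\rho}(\varepsilon,\eta)=\widehat\tau\bigl(2^{-1}\langle v_{\varepsilon},\varepsilon v_{\eta}\varepsilon^{-1}\rangle_{\overline\beta}\bigr)$, and conjugation-invariance of the form turns this into $\widehat\tau\bigl(2^{-1}\langle \varepsilon^{-1}v_{\varepsilon}\varepsilon, v_{\eta}\rangle_{\overline\beta}\bigr)$. The divergence, and the gap, lies in how you then invoke the defining relation of the $v$'s. You pair against $v_{\eta}$ in the second slot and land on $\rho\bigl(2^{-1}\gamma(\varepsilon^{-1}v_{\varepsilon}\varepsilon,\eta)\bigr)$; but expanding this $\gamma$ by your substitutions only yields, after the cancellation of the conjugated $[w]$-terms (which is correct, and needs no invariance of $\widetilde\rho$, since $\eta^{-1}\varepsilon^{-1}[w]\varepsilon\eta$ and $(\varepsilon\eta)^{-1}[w](\varepsilon\eta)$ are equal as matrices), the cocycle identity $\gamma(\varepsilon^{-1}v_{\varepsilon}\varepsilon,\eta)=\gamma(v_{\varepsilon},\varepsilon\eta)-\gamma(v_{\varepsilon},\varepsilon)$. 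Applying $\rho(2^{-1}\cdot)$ to the right-hand side and using the defining relation once more returns $\widehat\tau\bigl(2^{-1}\langle v_{\varepsilon},v_{\varepsilon\eta}\rangle_{\overline\beta}\bigr)$, i.e.\ the definition of $c_{\overline\beta,\rho}(\varepsilon,\eta)$: the computation is circular and never produces the three terms $[v_{\eta}]$, $[v_{\varepsilon\eta}]$, $[v_{\varepsilon}]$ of the statement (your expansion involves $[v_{\varepsilon}]$ conjugated and $[\varepsilon^{-1}v_{\varepsilon}\varepsilon]$ instead). The closing sentence, ``one is left with \dots exactly'' the claimed expression, is an assertion rather than a derivation; note that $\gamma(\varepsilon^{-1}v_{\varepsilon}\varepsilon,\eta)$ and $\varepsilon[v_{\eta}]\varepsilon^{-1}-[v_{\varepsilon\eta}]+[v_{\varepsilon}]$ are in general \emph{different} elements of $\Bbb F[\overline\beta]$, so one cannot simply ``identify'' them.

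The missing idea is the specialization of \eqref{eq:multiplication-formula-of-v-epsilon} to $\eta=\varepsilon^{-1}$, which gives $\varepsilon^{-1}v_{\varepsilon}\varepsilon=-v_{\varepsilon^{-1}}$. With it, $\langle\varepsilon^{-1}v_{\varepsilon}\varepsilon,v_{\eta}\rangle_{\overline\beta}=\langle v_{\eta},v_{\varepsilon^{-1}}\rangle_{\overline\beta}$, and the defining relation applied with $v_{\varepsilon^{-1}}$ in the second slot gives $c_{\overline\beta,\rho}(\varepsilon,\eta)=\rho\bigl(2^{-1}\gamma(v_{\eta},\varepsilon^{-1})\bigr)$; since $\gamma(v_{\eta},\varepsilon^{-1})=\varepsilon[v_{\eta}]\varepsilon^{-1}-[\varepsilon v_{\eta}\varepsilon^{-1}]=\varepsilon[v_{\eta}]\varepsilon^{-1}-[v_{\varepsilon\eta}]+[v_{\varepsilon}]$ lies in $\Bbb F[\overline\beta]$, where every extension $\widetilde\rho$ agrees with $\rho$, the stated formula follows for an arbitrary extension. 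This last observation also shows why your appeal to conjugation-invariance of $\widetilde\rho$ is both false in general (that such an invariant extension need not exist is exactly the content of Proposition \ref{prop:invariant-extension-of-additive-character}) and unnecessary: the proposition holds for every extension precisely because $\widetilde\rho$ is only ever evaluated, in combination, on an element of $\Bbb F[\overline\beta]$.
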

\begin{proof}
The relation \eqref{eq:multiplication-formula-of-v-epsilon} gives 
$\varepsilon^{-1}v_{\varepsilon}\varepsilon=-v_{\varepsilon^{-1}}$ so
that we have
\begin{align*}
 c_{\overline\beta,\rho}(\varepsilon,\eta)
 &=\widehat\tau\left(2^{-1}\langle v_{\varepsilon},
                \varepsilon v_{\eta}\varepsilon\rangle_{\overline\beta}\right)
  =\widehat\tau\left(2^{-1}\langle v_{\eta},v_{\varepsilon^{-1}}\rangle_{\overline\beta}
              \right)\\
 &=\rho\left(2^{-1}\gamma(v_{\eta},\varepsilon^{-1})\right)\\
 &=\widetilde\rho\left(2^{-1}\{
     \varepsilon[v_{\eta}]\varepsilon^{-1}
       -[v_{\varepsilon\eta}-v_{\varepsilon}]\}\right).
\end{align*}
\end{proof}

\begin{thm}\label{th:hypothesis-is-valid-for-separable-chara-poly-case}
The hypothesis \ref{hypo:tirviality-of-schur-multiplier} is valid 
if the characteristic polynomial of $\overline\beta\in M_n(\Bbb F)$ is
separable.
\end{thm}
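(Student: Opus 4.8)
The plan is to reduce the statement to the ``basic case'' already isolated in the reduction formulas \eqref{eq:reduction-of-c-t-to-basic-case} and \eqref{eq:reduction-of-c-beta-rho-to-basic-case}, and then to observe that when $\chi_{\overline\beta}$ is separable each of the basic algebras $\Bbb F[\overline\beta_i]$ is a field, so that its unit group is cyclic and carries no nontrivial Schur multiplier.

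Concretely, I would first fix an arbitrary additive character $\rho$ of $\Bbb F[\overline\beta]$, put $\overline\beta=\mathrm{diag}(\overline\beta_1,\dots,\overline\beta_r)$ along the primary decomposition $\chi_{\overline\beta}=\prod_{i=1}^r p_i(t)^{e_i}$, and choose the $\Bbb F$-linear splitting $v\mapsto[v]$ so that its image $V$ contains $M_\beta$ (and restricts to a splitting of $L_\beta/\Bbb F[\overline\beta]$ inside $L_\beta$). By Proposition \ref{prop:schur-multiplier-associated-with-finite-symplectic-space} and the independence of $[c_T]$ from the auxiliary operators, this choice does not affect the class $[c_{\overline\beta,\rho}c_T]\in H^2(\Bbb F[\overline\beta]^{\times},\Bbb C^{\times})$. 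Writing $\Bbb F[\overline\beta]^{\times}=\prod_i\Bbb F[\overline\beta_i]^{\times}$, the formulas \eqref{eq:reduction-of-c-t-to-basic-case} and \eqref{eq:reduction-of-c-beta-rho-to-basic-case} exhibit the cocycle $c_{\overline\beta,\rho}c_T$ as the pointwise product over $i$ of the pull-backs along the $i$-th projection of the cocycles $c_{\overline\beta_i,\rho_i}c_i$ on $\Bbb F[\overline\beta_i]^{\times}$, where $\rho_i=\rho|_{\Bbb F[\overline\beta_i]}$. Hence it suffices to show that each class $[c_{\overline\beta_i,\rho_i}c_i]\in H^2(\Bbb F[\overline\beta_i]^{\times},\Bbb C^{\times})$ vanishes: once $c_{\overline\beta_i,\rho_i}c_i=\partial b_i$ for a $1$-cochain $b_i$, the product $\prod_i(\partial b_i)\circ(\text{projection})=\partial\big(\prod_i b_i\circ(\text{projection})\big)$ is again a coboundary, so no Künneth-type cross term can obstruct us.

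It remains to treat the basic case, which is where separability enters and is the only step of substance. Since $\Bbb F$ is finite, hence perfect, separability of $\chi_{\overline\beta}$ forces it to be squarefree, i.e.\ $e_i=1$ for every $i$; therefore $\overline\beta_i$ is a regular element with irreducible characteristic polynomial $p_i(t)$, and $\Bbb F[\overline\beta_i]\cong\Bbb F[t]/(p_i(t))$ is a finite field. Consequently $\Bbb F[\overline\beta_i]^{\times}$ is a finite cyclic group, and for any finite cyclic group $C$ one has $H^2(C,\Bbb C^{\times})\cong\Bbb C^{\times}/(\Bbb C^{\times})^{|C|}=1$ because $\Bbb C^{\times}$ is divisible; thus $[c_{\overline\beta_i,\rho_i}c_i]=0$. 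Combining, $[c_{\overline\beta,\rho}c_T]=0$, and since $\rho$ was arbitrary this is precisely Hypothesis \ref{hypo:tirviality-of-schur-multiplier}. (If one prefers an explicit coboundary: in the basic case the matrix trace pairing identifies $\Bbb F[\overline\beta_i]$ with its own character group, so $\rho_i$ admits a conjugation-invariant extension $\widetilde\rho$ to $M_{n_i}(\Bbb F)=L_\beta$; feeding this $\widetilde\rho$ into Proposition \ref{prop:shur-multiplier-is-almost-coboundary} displays $c_{\overline\beta_i,\rho_i}$ as the coboundary of $\varepsilon\mapsto\widetilde\rho(2^{-1}[v_\varepsilon])$, while $c_i$ is a coboundary by cyclicity.) The only point requiring care is thus the bookkeeping of the reduction — arranging the splitting so that \eqref{eq:reduction-of-c-t-to-basic-case} and \eqref{eq:reduction-of-c-beta-rho-to-basic-case} apply simultaneously, and checking that passage to cohomology respects the product over $i$ — and this is routine; there is no genuine obstacle.
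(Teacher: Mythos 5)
Your proposal is correct and follows essentially the same route as the paper: reduce via the product formulas \eqref{eq:reduction-of-c-t-to-basic-case} and \eqref{eq:reduction-of-c-beta-rho-to-basic-case} to the case of irreducible characteristic polynomial, where $\Bbb F[\overline\beta_i]^{\times}$ is cyclic and $H^2$ of a finite cyclic group with coefficients in the divisible group $\Bbb C^{\times}$ vanishes. Your extra bookkeeping (separable $\Rightarrow$ all $e_i=1$, and the product of pulled-back coboundaries being a coboundary) only spells out what the paper leaves implicit.
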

\begin{proof}
The formula \eqref{eq:reduction-of-c-t-to-basic-case} and 
\eqref{eq:reduction-of-c-beta-rho-to-basic-case} enable us to 
assume that
the characteristic polynomial of $\overline\beta\in M_n(\Bbb F)$ is irreducible
over $\Bbb F$. In this case $\Bbb F[\overline\beta]$ is a finite field so that
the multiplicative group $\Bbb F[\overline\beta]^{\times}$ is a cyclic
group. Then $H^2(\Bbb F[\overline\beta]^{\times},\Bbb C^{\times})$ is
well-known to be trivial. 
\end{proof}

Due to the formula \eqref{eq:reduction-of-c-t-to-basic-case} and 
\eqref{eq:reduction-of-c-beta-rho-to-basic-case}, the hypothesis 
\ref{hypo:tirviality-of-schur-multiplier} is reduced to the case 
$\chi_{\overline\beta}(t)=p(t)^e$ with an irreducible polynomial 
$p(t)\in\Bbb F[t]$. In the following subsections, we will consider 
two extreme cases 
\begin{enumerate}
\item $e=1$ (in subsections 
  \ref{subsec:irreducible-characteristic-polynomial-case} and 
\ref{subsec:quadratic-cuspidal-case}), and 
\item $\deg p(t)=1$ (in subsection 
\ref{subsec:splitting-characteristic-polynomial-case})
\end{enumerate}
to see how the $2$-cocycles $c_{\overline\beta,\rho}$ and $c_T$ are written as
coboundaries. 

\subsection{}
\label{subsec:irreducible-characteristic-polynomial-case}
In this subsection, we will assume that the characteristic polynomial 
$\chi_{\overline\beta}(t)$ of $\overline\beta$ is irreducible over $\Bbb F$. Then, as is
pointed out in the proof of Theorem
\ref{th:hypothesis-is-valid-for-separable-chara-poly-case}, the group
of the Schur multipliers $H^2(\Bbb F[\overline\beta]^{\times},\Bbb C^{\times})$
is trivial. In the following subsections, we will try to express the
$2$-cocycles $c_{\overline\beta,\rho}$ and $c_T$ as coboundaries as explicit as
possible. 

There exists a $\Bbb F$-basis $\{u_1,\cdots,u_n\}$ 
of $\Bbb F[\overline\beta]$
such that the regular representation of the field $\Bbb F[\overline\beta]$ with
respect to the $\Bbb F$-basis is the inclusion mapping of 
$\Bbb F[\overline\beta]$ into $M_n(\Bbb F)$. So the trace of $\Bbb F[\overline\beta]$
over $\Bbb F$ is $T_{\Bbb F[\overline\beta]/\Bbb F}(x)=\text{\rm tr}(x)$ for
all $x\in\Bbb F[\overline\beta]$. Then we have a group isomorphism
$a\mapsto\rho_a$ of $\Bbb F[\overline\beta]$ onto the group of additive
character of $\Bbb F[\overline\beta]$ defined by 
$\rho_a(x)=\widehat\tau\left(\text{\rm tr}(ax)\right)$. So 
$\rho=\rho_a$ has a extension $\widetilde\rho(x)=\text{\rm tr}(ax)$ to 
$L_{\beta}=M_n(\Bbb F)$ which is $\Bbb F[\overline\beta]^{\times}$-invariant. 
Then 
Proposition \ref{prop:shur-multiplier-is-almost-coboundary} shows that
the 2-cocycle 
$c_{\overline\beta,\rho}\in Z^2(\Bbb F[\overline\beta]^{\times},\Bbb C^{\times})$ is a
coboundary
$$
 c_{\overline\beta,\rho}(\varepsilon,\eta)
 =\gamma(\eta)\cdot\gamma(\varepsilon\eta)^{-1}
  \gamma(\varepsilon)
$$
where 
$\gamma(\varepsilon)
 =\widehat\tau\left(2^{-1}\text{\rm tr}(a[v_{\varepsilon}])\right)$ for 
$\varepsilon\in\Bbb F[\overline\beta]^{\times}$. 

We will apply the general setting of subsection 
\ref{subsec:schrodinger-rep-associated-with-polarization} 
to give an explicit description of 2-cocycle 
$c_T\in Z^2(\Bbb F[\overline\beta]^{\times},\Bbb C^{\times})$ by defining a
canonical polarization of $\Bbb V_{\beta}$. 
To begin with there exists a symmetric $g\in GL_n(\Bbb F)$ such that 
$^t\overline\beta=g\overline\beta g^{-1}$ (for example 
$g=\left(\text{\rm tr}(u_iu_j)\right)_{i,j=1,\cdots,n}$). 
Then the $\Bbb F$-linear endomorphism 
$X\mapsto X^{\ast}=g^{-1}\,^tXg$ is an involution of $M_n(\Bbb F)$,
that is, $(X^{\ast})^{\ast}=X$ for all $X\in M_n(\Bbb F)$. 

\begin{rem}
\label{remark:polarization-is-symmetric-anti-symmetric-decompo}
We have canonical $\Bbb F$-linear isomorphisms
\begin{equation*}
 \Bbb F[\overline\beta]{\otimes}_{\Bbb F}\Bbb F[\overline\beta]\,\tilde{\to}\,
 \text{\rm End}_{\Bbb F}(\Bbb F[\overline\beta])\,\tilde{\to}\,
 M_n(\Bbb F)
\end{equation*}
where the first isomorphism is given by $a\otimes b\mapsto f_{a,b}$
with $f_{a,b}(x)=\text{\rm tr}(bx)\cdot a$ for $x\in\Bbb F[\overline\beta]$,
and the second isomorphism is to take the representation matrix with
respect to $\{u_1,\cdots,u_n\}$. Then the involution 
$X\mapsto X^{\ast}=g^{-1}\,^tXg$ of $M_n(\Bbb F)$ induces the 
$\Bbb F$-linear endomorphism $a\otimes b\mapsto b\otimes a$ of 
$\Bbb F[\overline\beta]{\otimes}_{\Bbb F}\Bbb F[\overline\beta]$.
\end{rem}

Put
$$
 W_{\pm}=\{X\in M_n(\Bbb F)\mid X^{\ast}=\pm X\}.
$$
Then $M_n(\Bbb F)=W_-\oplus W_+$ and $\Bbb F[\overline\beta]\subset W_+$. Let
us denote by $\Bbb W_{\pm}$ the image of $W_{\pm}$ by the canonical
surjection $M_n(\Bbb F)\to\Bbb V_{\beta}$. Then 
$\Bbb V_{\beta}=\Bbb W_-\oplus\Bbb W_+$ is a polarization of the
symplectic $\Bbb F$-space $\Bbb V_{\beta}$ because we have 
$\text{\rm tr}(XY\overline\beta)=\text{\rm tr}(Y^{\ast}X^{\ast}\overline\beta)$ for all
$X, Y\in M_n(\Bbb F)$. Using this polarization, we have 
$$
 c_T(\varepsilon,\eta)
 =\gamma_{\widehat\tau}(Q_{\varepsilon,\eta})^{-1}
$$
for $\varepsilon, \eta\in\Bbb F[\overline\beta]^{\times}$ where 
$\gamma_{\widehat\tau}(Q_{\varepsilon,\eta})$ is the generalized Weil
constant of the $\Bbb F$-quadratic form defined on 
$\Bbb W_-\times\Bbb W_-\times\Bbb W_-$ by 
$$
 Q_{\varepsilon,\eta}(x,y,z)
 =\langle x,y\sigma_{\eta}\rangle_{\overline\beta}
 +\langle y,z\sigma_{\varepsilon}\rangle_{\overline\beta}
 -\langle x,z\sigma_{\varepsilon\eta}\rangle_{\overline\beta}.
$$
Our problem is to find an explicit function 
$\delta:\Bbb F[\overline\beta]^{\times}\to\Bbb C^{\times}$ such that
$$
 c_T(\varepsilon,\eta)
 =\delta(\eta)\cdot\delta(\varepsilon\eta)^{-1}\delta(\varepsilon)
$$
for all $\varepsilon, \eta\in\Bbb F[\overline\beta]^{\times}$. 
As we will see in the next subsection, 
the solution is quite interesting even in the simplest case $n=2$.

\subsection{}\label{subsec:quadratic-cuspidal-case}
Suppose that the characteristic polynomial of $\overline\beta\in M_2(\Bbb F)$
is irreducible over $\Bbb F$, and use the notations of the preceding
subsection. We can assume that $\overline\beta=\begin{bmatrix}
                                       0&\alpha\\
                                       1&0
                                      \end{bmatrix}$ where 
$\alpha\in\Bbb F^{\times}$ is not a square. Then we have 
$^t\overline\beta=g\overline\beta g^{-1}$ with $g=\begin{bmatrix}
                                 0&1\\
                                 1&0
                                \end{bmatrix}$ and 
$$
 W_+=\left\{\begin{bmatrix}
             a&b\\
             c&a
            \end{bmatrix}\in M_2(\Bbb F)\right\},
 \quad
 W_-=\left\{\begin{bmatrix}
             a&0\\
             0&-a
            \end{bmatrix}\in M_2(\Bbb F)\right\}.
$$
We will identify $\Bbb W_+$  and $\Bbb W_-$ with $\Bbb F$ by means of
the mappings 
$$
 y\mapsto\begin{bmatrix}
          0&y\\
          0&0
         \end{bmatrix}\pmod{\Bbb F[\overline\beta]},
 \quad
 x\mapsto\begin{bmatrix}
          x&0\\
          0&-x
         \end{bmatrix}\pmod{\Bbb F[\overline\beta]}
$$
respectively. Then $Sp(\Bbb V_{\beta})=SL_2(\Bbb F)$ by the block
description with respect to the polarization 
$\Bbb V_{\beta}=\Bbb W_-\oplus\Bbb W_+$. 
For $\zeta=\begin{bmatrix}
            a&\alpha b\\
            b&a
           \end{bmatrix}\in\Bbb F[\overline\beta]$, put 
$$
 \overline\zeta=\begin{bmatrix}
                 a&-\alpha b\\
                -b&a
                \end{bmatrix}\in\Bbb F[\overline\beta],
 \quad
 \zeta_+=a,
 \quad
 \zeta_-=b
$$
(a collision of notation with $\overline\beta=\beta\npmod{\frak p}$
occurs here, but there may be no risk of confusion in the following
arguments). 
Then for $\varepsilon\in\Bbb F[\overline\beta]^{\times}$, we have
$$
 \sigma_{\varepsilon}
 =\begin{pmatrix}
   (\varepsilon/\overline\varepsilon)_+
           &2\alpha(\varepsilon/\overline\varepsilon)_-\\
   2^{-1}(\varepsilon/\overline\varepsilon)_-
           &(\varepsilon/\overline\varepsilon)_+
  \end{pmatrix}
 \in SL_2(\Bbb F).
$$

\begin{prop}\label{prop:explicit-formula-of-2-cocycle-in-quadratic-case}
For $\varepsilon, \eta\in\Bbb F[\overline\beta]^{\times}$, we have
\begin{enumerate}
\item if $(\varepsilon/\overline\varepsilon)_-\cdot (\eta/\overline\eta)_-
          \cdot(\varepsilon\eta/\overline{\varepsilon\eta})_-=0$, then 
      $c_{\widehat\tau}(\varepsilon,\eta)=1$,
\item if $(\varepsilon/\overline\varepsilon)_-\cdot (\eta/\overline\eta)_-
          \cdot(\varepsilon\eta/\overline{\varepsilon\eta})_-\neq 0$,
          then 
$$
 c_{\widehat\tau}(\varepsilon,\eta)
 =\gamma_{\widehat\tau}(\alpha(\varepsilon/\overline\varepsilon)_-)\cdot
  \gamma_{\widehat\tau}(\alpha(\eta/\overline\eta)_-)\cdot
  \gamma_{\widehat\tau}(\alpha(\varepsilon\eta/\overline{\varepsilon\eta})_-)^{-1}.
$$
\end{enumerate}
\end{prop}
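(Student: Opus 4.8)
The plan is to compute $c_{\widehat\tau}(\varepsilon,\eta)=\gamma_{\widehat\tau}(Q_{\varepsilon,\eta})^{-1}$ directly from the definition, where $Q_{\varepsilon,\eta}$ is the $\Bbb F$-quadratic form on $\Bbb W_-\times\Bbb W_-\times\Bbb W_-\cong\Bbb F^3$ given by
$$
 Q_{\varepsilon,\eta}(x,y,z)
 =\langle x,y\sigma_\eta\rangle_{\overline\beta}
 +\langle y,z\sigma_\varepsilon\rangle_{\overline\beta}
 -\langle x,z\sigma_{\varepsilon\eta}\rangle_{\overline\beta}.
$$
First I would record, from the matrix form of $\sigma_\varepsilon\in SL_2(\Bbb F)$ above, the explicit action $w\sigma_\varepsilon$ on $\Bbb W_-\cong\Bbb F$: writing $s_\varepsilon=(\varepsilon/\overline\varepsilon)_-$ and $t_\varepsilon=(\varepsilon/\overline\varepsilon)_+$ (so $t_\varepsilon^2-\alpha s_\varepsilon^2=1$), the $\Bbb W_-$-component of $w\sigma_\varepsilon$ is $t_\varepsilon w$ and the $\Bbb W_+$-component is $2^{-1}s_\varepsilon w$. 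Using the identification of $\Bbb W_+$ and $\Bbb W_-$ with $\Bbb F$ and the induced pairing $\langle x,y\rangle_{\overline\beta}$ (which under these identifications becomes a fixed nonzero multiple of $xy$; one computes $\mathrm{tr}\bigl((XY-YX)\overline\beta\bigr)$ on the chosen basis vectors), each term $\langle u,v\sigma_\varepsilon\rangle_{\overline\beta}$ reduces to (constant)$\cdot s_\varepsilon\,uv$. Thus $Q_{\varepsilon,\eta}(x,y,z)$ becomes a bilinear expression $\mathrm{const}\cdot(s_\eta\,xy+s_\varepsilon\,yz-s_{\varepsilon\eta}\,xz)$, i.e. up to a harmless unit the symmetric bilinear form with Gram matrix having zero diagonal and off-diagonal entries $s_\eta,s_\varepsilon,s_{\varepsilon\eta}$.

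Next I would diagonalize this ternary quadratic form and read off its Weil constant. If any one of $s_\varepsilon,s_\eta,s_{\varepsilon\eta}$ vanishes the form $Q_{\varepsilon,\eta}$ is a hyperbolic plane plus (possibly) a zero form, hence its generalized Weil constant is $1$ by the multiplicativity property (1) together with $\gamma_{\widehat\tau}(0)=1$ and the fact that a hyperbolic plane has Weil constant $1$ (this is exactly the $B(\Bbb V_\beta)$ computation already done in the excerpt, or a one-line Gauss-sum evaluation); this gives case (1). When all three are nonzero, a change of variables (completing the square) diagonalizes the form: one obtains, up to squares in the discriminant, a diagonal form whose associated Weil constant factors through the multiplicativity property (1) as a product $\gamma_{\widehat\tau}(\text{stuff}_1)\gamma_{\widehat\tau}(\text{stuff}_2)\gamma_{\widehat\tau}(\text{stuff}_3)$. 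The cocycle relation $T(\varepsilon\eta/\overline{\varepsilon\eta})=T(\varepsilon/\overline\varepsilon)T(\eta/\overline\eta)$ inside $SL_2$, i.e. $s_{\varepsilon\eta}\equiv$ the group law, will let me identify the three diagonal entries (after inverting, since $c_{\widehat\tau}=\gamma_{\widehat\tau}^{-1}$, and using $\gamma_{\widehat\tau}(a)^{-1}=\gamma_{\widehat\tau}(a^{-1})$ or $\gamma_{\widehat\tau}(-a)$ depending on normalization) with $\alpha s_\varepsilon,\alpha s_\eta,\alpha s_{\varepsilon\eta}$, yielding the stated formula in case (2).

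The main obstacle I anticipate is the bookkeeping in the diagonalization: the generalized Weil constant is sensitive to the precise discriminant and Hasse invariant of the ternary form, so I must track the constant (the nonzero scalar relating $\langle x,y\rangle_{\overline\beta}$ on $\Bbb W_-\times\Bbb W_+$ to $xy$), the factor $2^{-1}$ appearing in $\sigma_\varepsilon$, and the signs, carefully enough that spurious Legendre-symbol factors $\left(\tfrac{\ast}{\Bbb F}\right)$ cancel. The identities $t_\varepsilon^2-\alpha s_\varepsilon^2=1$ and the explicit group law for products in $\Bbb F[\overline\beta]^\times$ (which control $s_{\varepsilon\eta}$ in terms of $s_\varepsilon,t_\varepsilon,s_\eta,t_\eta$) should make every unwanted factor a square, but verifying this is the delicate part; I would do it by an explicit $2\times 2\times 2$ determinant/minor computation of the Gram matrix of $Q_{\varepsilon,\eta}$ rather than by abstract Witt-group manipulation. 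Once the discriminant is pinned down, properties (1) and (2) of $\gamma_{\widehat\tau}$ finish the argument mechanically.
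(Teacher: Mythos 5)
Your proposal is correct and follows essentially the same route as the paper: compute $Q_{\varepsilon,\eta}$ explicitly as a ternary form with zero diagonal and off-diagonal entries proportional to $(\eta/\overline\eta)_-$, $(\varepsilon/\overline\varepsilon)_-$, $(\varepsilon\eta/\overline{\varepsilon\eta})_-$, observe that when one of these vanishes the form is a hyperbolic plane plus a zero form (hence has trivial Weil constant), and otherwise diagonalize and apply multiplicativity of $\gamma_{\widehat\tau}$. The only cosmetic difference is that the paper invokes \cite[p.39, Th.3.8]{Scharlau} for the equivalence of the zero-diagonal form with the diagonal form $4\alpha(\varepsilon/\overline\varepsilon)_-x^2+4\alpha(\eta/\overline\eta)_-y^2-4\alpha(\varepsilon\eta/\overline{\varepsilon\eta})_-z^2$, where you propose to verify it by hand (which works, since over $\Bbb F$ regular forms of equal rank are classified by discriminant and the two discriminants differ by the square $4$).
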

\begin{proof}
The quadratic form $Q_{\varepsilon,\eta}$ on 
$\Bbb W_-\times\Bbb W_-\times\Bbb W_-=\Bbb F\times\Bbb F\times\Bbb F$
is 
$$
 Q_{\varepsilon,\eta}(x,y,z)
 =4\alpha(\eta/\overline\eta)_-\cdot xy
  +4\alpha(\varepsilon/\overline\varepsilon)_-\cdot yz
  -4\alpha(\varepsilon\eta/\overline{\varepsilon\eta})_-\cdot zx
$$
and the symmetric matrix associated with it is
$$
 \widetilde Q_{\varepsilon,\eta}
 =\begin{bmatrix}
   0&4\alpha(\eta/\overline\eta)_-
      &-4\alpha(\varepsilon\eta/\overline{\varepsilon\eta})_-\\
   4\alpha(\eta/\overline\eta)_-&0
      &4\alpha(\varepsilon/\overline\varepsilon)_-\\
   -4\alpha(\varepsilon\eta/\overline{\varepsilon\eta})_-
      &4\alpha(\varepsilon/\overline\varepsilon)_-&0
  \end{bmatrix}.
$$
If $(\varepsilon/\overline\varepsilon)_-=0$, then 
$\delta=\varepsilon/\overline\varepsilon=\pm 1$ and 
$$
 P\widetilde Q_{\varepsilon,\eta}\,^tP
 =\begin{bmatrix}
   0&8\delta\alpha(\eta/\overline\eta)_-&0\\
   8\delta\alpha(\eta/\overline\eta)_-&0&0\\
   0&0&0
  \end{bmatrix}
$$
with $P=\begin{bmatrix}
         1&0&0\\
         0&\delta&-1\\
         0&\delta&1
        \end{bmatrix}$. Hence the quadratic form
$Q_{\varepsilon,\eta}$ is equivalent to $0$ or the orthogonal sum of
$0$ and a hyperbolic plane, and we have
$$
 c_{\widehat\tau}(\varepsilon,\eta)=\gamma_{\widehat\tau}(Q_{\varepsilon,\eta})^{-1}
 =1.
$$
The cases $(\eta/\overline\eta)_-=0$ or 
$(\varepsilon\eta/\overline{\varepsilon\eta})_-=0$ is treated
similarly. 

If 
$(\varepsilon/\overline\varepsilon)_-(\eta/\overline\eta)_-
 (\varepsilon\eta/\overline{\varepsilon\eta})_-\neq 0$, then
$Q_{\varepsilon,\eta}$ is a regular quadratic form on a finite field
$\Bbb F$ of odd characteristic. Then $Q_{\varepsilon,\eta}$ is
equivalent to the quadratic form
$$
 (x,y,z)\mapsto 
   4\alpha(\varepsilon/\overline\varepsilon)_-\cdot x^2
  +4\alpha(\eta/\overline\eta)_-\cdot y^2
  -4\alpha(\varepsilon\eta/\overline{\varepsilon\eta})_-\cdot z^2
$$
due to \cite[p.39, Th.3.8]{Scharlau}. Hence we have
\begin{align*}
 c_{\widehat\tau}(\varepsilon,\eta)
 &=\gamma_{\widehat\tau}(Q_{\varepsilon,\eta})\\
 &=\gamma_{\widehat\tau}(4\alpha(\varepsilon/\overline\varepsilon)_-)
   \gamma_{\widehat\tau}(4\alpha(\eta/\overline\eta)_-)
   \gamma_{\widehat\tau}(4\alpha(\varepsilon\eta/\overline{\varepsilon\eta})_-)^{-1}.
\end{align*}
\end{proof}

Finally we have an explicit expression of $c_{\widehat\tau}$ as a coboundary

\begin{thm}\rm\label{th:explict-expression-of-c-tau-as-coboundary}
Put
$$
 \delta(\varepsilon)
 =\begin{cases}
   \gamma_{\widehat\tau}(\alpha(\varepsilon/\overline\varepsilon)_-)
     &:(\varepsilon/\overline\varepsilon)_-\neq 0,\\
   \left(\frac{\varepsilon/\overline\varepsilon}
              {\Bbb F}\right)
     &:(\varepsilon/\overline\varepsilon)_-=0
  \end{cases}
$$
for $\varepsilon\in\Bbb F[\overline\beta]^{\times}$. Then we have
$$
 c_{\widehat\tau}(\varepsilon,\eta)
 =\delta(\eta)\delta(\varepsilon\eta)^{-1}\delta(\varepsilon)
$$
for all $\varepsilon,\eta\in\Bbb F[\overline\beta]^{\times}$.
\end{thm}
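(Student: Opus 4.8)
The plan is to work entirely from the explicit description of $c_{\widehat\tau}$ in Proposition \ref{prop:explicit-formula-of-2-cocycle-in-quadratic-case}, after the substitution $\zeta=\varepsilon/\overline\varepsilon$, $\xi=\eta/\overline\eta$. First I would record the elementary facts that $\zeta\mapsto\overline\zeta$ is a ring automorphism of $\Bbb F[\overline\beta]$, so that $\overline{\varepsilon\eta}=\overline\varepsilon\,\overline\eta$ and hence $\varepsilon\eta/\overline{\varepsilon\eta}=\zeta\xi$, and that each $\zeta=\varepsilon/\overline\varepsilon$ has norm $N(\zeta)=\zeta\overline\zeta=\zeta_+^2-\alpha\zeta_-^2=1$; in particular, if $\zeta_-=0$ then $\zeta_+=\pm1$. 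The only further ingredients are the multiplication rule of $\Bbb F[\overline\beta]$, which gives $(\zeta\xi)_+=\zeta_+\xi_++\alpha\zeta_-\xi_-$ and $(\zeta\xi)_-=\zeta_+\xi_-+\zeta_-\xi_+$, the multiplicativity $\gamma_{\widehat\tau}(ab)=\left(\frac{a}{\Bbb F}\right)\gamma_{\widehat\tau}(b)$ for $a,b\in\Bbb F^{\times}$ (immediate from the displayed value of $\gamma_{\widehat\tau}(a)$ for $a\in\Bbb F^{\times}$ in subsection \ref{subsec:schrodinger-rep-associated-with-polarization}), and the standard Gauss-sum evaluation $\gamma_{\widehat\tau}(1)^2=\left(\frac{-1}{\Bbb F}\right)$. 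It then remains to check $\delta(\eta)\delta(\varepsilon\eta)^{-1}\delta(\varepsilon)=c_{\widehat\tau}(\varepsilon,\eta)$ case by case according to which of $\zeta_-,\xi_-,(\zeta\xi)_-$ vanish; since $\zeta_-=0$ forces $(\zeta\xi)_-=\zeta_+\xi_-$ and $\xi_-=0$ forces $(\zeta\xi)_-=\zeta_-\xi_+$, the possibilities are exhausted by: all three nonzero; $\zeta_-=0$; $\xi_-=0$ with $\zeta_-\neq0$; and $\zeta_-,\xi_-\neq0$ with $(\zeta\xi)_-=0$.

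The generic case needs no work: when $\zeta_-\xi_-(\zeta\xi)_-\neq0$ the three values of $\delta$ are given by its first branch, so $\delta(\eta)\delta(\varepsilon\eta)^{-1}\delta(\varepsilon)=\gamma_{\widehat\tau}(\alpha\zeta_-)\gamma_{\widehat\tau}(\alpha\xi_-)\gamma_{\widehat\tau}(\alpha(\zeta\xi)_-)^{-1}$, which is precisely case (2) of Proposition \ref{prop:explicit-formula-of-2-cocycle-in-quadratic-case}. If $\zeta_-=0$ then $\zeta=\zeta_+=\pm1$ and $c_{\widehat\tau}(\varepsilon,\eta)=1$ by case (1); here $\delta(\varepsilon)=\left(\frac{\zeta_+}{\Bbb F}\right)$, while if $\xi_-\neq0$ then $\delta(\varepsilon\eta)=\gamma_{\widehat\tau}(\alpha\zeta_+\xi_-)=\left(\frac{\zeta_+}{\Bbb F}\right)\gamma_{\widehat\tau}(\alpha\xi_-)=\delta(\varepsilon)\delta(\eta)$, so the product telescopes to $1$, and if $\xi_-=0$ then all three values of $\delta$ are Legendre symbols and the identity is their multiplicativity. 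The case $\xi_-=0,\ \zeta_-\neq0$ is symmetric.

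The one case with real content is $\zeta_-\neq0$, $\xi_-\neq0$, $(\zeta\xi)_-=0$, where again $c_{\widehat\tau}(\varepsilon,\eta)=1$ by case (1). Here $(\zeta\xi)_-=\zeta_+\xi_-+\zeta_-\xi_+=0$ gives $\zeta_+=-\zeta_-\xi_+/\xi_-$, and substituting this into $(\zeta\xi)_+=\zeta_+\xi_++\alpha\zeta_-\xi_-$ together with $N(\xi)=\xi_+^2-\alpha\xi_-^2=1$ produces the key simplification $(\zeta\xi)_+=-\zeta_-/\xi_-$. Consequently $\delta(\varepsilon\eta)=\left(\frac{-\zeta_-/\xi_-}{\Bbb F}\right)=\left(\frac{-1}{\Bbb F}\right)\left(\frac{\zeta_-}{\Bbb F}\right)\left(\frac{\xi_-}{\Bbb F}\right)$, whereas $\delta(\eta)\delta(\varepsilon)=\gamma_{\widehat\tau}(\alpha\xi_-)\gamma_{\widehat\tau}(\alpha\zeta_-)=\left(\frac{\alpha^2\zeta_-\xi_-}{\Bbb F}\right)\gamma_{\widehat\tau}(1)^2=\left(\frac{\zeta_-}{\Bbb F}\right)\left(\frac{\xi_-}{\Bbb F}\right)\left(\frac{-1}{\Bbb F}\right)$, so that $\delta(\eta)\delta(\varepsilon\eta)^{-1}\delta(\varepsilon)=1=c_{\widehat\tau}(\varepsilon,\eta)$. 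I expect this verification — the compatibility of the two branches of $\delta$ along the degenerate locus $(\zeta\xi)_-=0$ — to be the only delicate point of the argument: it succeeds exactly because $\varepsilon/\overline\varepsilon$ has norm one and because $\gamma_{\widehat\tau}(1)^2=\left(\frac{-1}{\Bbb F}\right)$. Assembling the cases yields the asserted coboundary formula.
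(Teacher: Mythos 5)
Your proposal is correct, and it is the intended derivation: the paper states this theorem without proof as an immediate consequence of Proposition \ref{prop:explicit-formula-of-2-cocycle-in-quadratic-case}, and your case analysis supplies exactly the verification left to the reader. In particular you correctly isolate the only nontrivial check, the boundary case $(\varepsilon/\overline\varepsilon)_-\,(\eta/\overline\eta)_-\neq 0$ with $(\varepsilon\eta/\overline{\varepsilon\eta})_-=0$, where the identity rests precisely on the norm-one property of $\varepsilon/\overline\varepsilon$ and on $\gamma_{\widehat\tau}(1)^2=\left(\frac{-1}{\Bbb F}\right)$.
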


\subsection{}
\label{subsec:splitting-characteristic-polynomial-case}
In this subsection, we will assume that the characteristic polynomial
of $\overline\beta\in M_n(\Bbb F)$ is $\chi_{\overline\beta}(t)=(t-a)^n$ with
$a\in\Bbb F$. In this case we can suppose that
$$
 \overline\beta=J_n(a)
 =\begin{bmatrix}
   a&1&      &      \\
    &a&\ddots&      \\
    & &\ddots&1   \\
    & &      &a
  \end{bmatrix}
$$
is a Jordan block. Let $W_+$ and $W_-$ be the $\Bbb F$-vector subspace
of $M_n(\Bbb F)$ consisting of the upper triangular matrices and the
lower triangular matrices with the diagonal elements $0$ 
respectively. Let $\Bbb W_{\pm}$ be the image of $W_{\pm}$ with
respect to the canonical surjection $M_n(\Bbb F)\to\Bbb
V_{\beta}$. Then $\Bbb V_{\beta}=\Bbb W_-\oplus\Bbb W_+$ is a
polarization of the symplectic $\Bbb F$-space $\Bbb V_{\beta}$. 
Note that $\Bbb W_+\sigma_{\varepsilon}=\Bbb W_+$ for any 
$\varepsilon\in\Bbb F[\overline\beta]^{\times}$. Hence 
$c_T(\varepsilon,\eta)=1$ for all 
$\varepsilon, \eta\in\Bbb F[\overline\beta]^{\times}$ as described in
subsection
\ref{subsec:schrodinger-rep-associated-with-polarization}. 

Until the end of this subsection 
we will assume that the characteristic of $\Bbb F$ is
greater than $n$. Then we have a group isomorphism 
$$
 \Bbb F^{\times}\times\Bbb F^{n-1}\,\tilde{\to}\,
 \Bbb F[\overline\beta]^{\times}
$$
defined by 
$(r,s_1,\cdots,s_{n-1})\mapsto 
 r\cdot\exp\left(\sum_{k=1}^{n-1}s_kJ_n(0)^k\right)$. Here
$$
 \exp S=\sum_{k=0}^{n-1}\frac 1{k!}S^k
$$
is the exponential of upper triangular matrix $S\in M_n(\Bbb F)$ with
diagonal elements $0$. 
For any $A\in\Bbb F[\overline\beta]$, put $\rho_A(X)=\text{\rm tr}(X\,^tA)$ 
($X\in\Bbb F[\overline\beta]$). Then $A\mapsto\rho_A$ gives a group isomorphism
of $\Bbb F[\overline\beta]$ onto the group of the additive characters of 
$\Bbb F[\overline\beta]$. Let $V$ be the $\Bbb F$-vector space consisting of
the $X\in M_n(\Bbb F)$ such that $\text{\rm tr}(X\,^tA)=0$ for all
$A\in\Bbb F[\overline\beta]$. Then we have $M_n(\Bbb F)=V\oplus\Bbb F[\overline\beta]$
because $\text{\rm ch}\,\Bbb F>n$, and a $\Bbb F$-linear splitting
of the exact sequence \eqref{eq:canonical-exact-seq-of-v-beta} is
defined with respect to this $V$. Now any additive character 
$\rho=\rho_A$ of $\Bbb F[\overline\beta]$ ($A\in\Bbb F[\overline\beta]$) has an
extension $\widetilde\rho(X)=\text{\rm tr}(X\,^tA)$ to 
$L_{\beta}=M_n(\Bbb F)$, and Proposition
\ref{prop:shur-multiplier-is-almost-coboundary} gives
$$
 c_{\overline\beta,\rho}(\varepsilon,\eta)
 =\widehat\tau\left(2^{-1}\text{\rm tr}(
   \varepsilon[v_{\eta}]\varepsilon^{-1}\,^tA)\right)
$$
for all $\varepsilon ,\eta\in\Bbb F[\overline\beta]^{\times}$. 

\begin{prop}\label{prop:invariant-extension-of-additive-character}
An additive character $\rho=\rho_A$ of $\Bbb F[\overline\beta]$ with 
$A\in\Bbb F[\overline\beta]$ has an extension $\widetilde\rho$ to $M_n(\Bbb F)$
as an additive character such that 
$\widetilde\rho(\varepsilon X\varepsilon^{-1})=\widetilde\rho(X)$ for
all $\varepsilon\in\Bbb F[\overline\beta]^{\times}$ if and only if 
$A\in\Bbb F[\overline\beta]$ is a diagonal matrix. In this case we have 
$c_{\overline\beta,\rho}(\varepsilon,\eta)=1$ for all 
$\varepsilon, \eta\in\Bbb F[\overline\beta]^{\times}$.
\end{prop}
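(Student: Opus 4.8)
The statement has an ``if and only if'' for the existence of a $\Bbb F[\overline\beta]^{\times}$-invariant extension $\widetilde\rho$, plus a consequence for $c_{\overline\beta,\rho}$. I would treat the two directions separately and then deduce the cocycle statement from the ``only if'' construction.

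For the ``if'' direction, suppose $A\in\Bbb F[\overline\beta]$ is diagonal. Since $\overline\beta=J_n(a)$ and $\text{\rm ch}\,\Bbb F>n$, the only diagonal matrices in $\Bbb F[\overline\beta]$ are the scalar matrices $A=c\cdot 1_n$; indeed a polynomial in $J_n(a)$ that is diagonal must have zero coefficient on every $J_n(0)^k$ with $k\geq 1$. Then $\rho_A(X)=\widehat\tau(c\cdot\text{\rm tr}(X))$ and the obvious extension $\widetilde\rho(X)=\widehat\tau(c\cdot\text{\rm tr}(X))$ to all of $M_n(\Bbb F)$ is manifestly conjugation-invariant because the trace is. So an invariant extension exists. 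Moreover, with this $\widetilde\rho$ the formula $c_{\overline\beta,\rho}(\varepsilon,\eta)=\widehat\tau(2^{-1}\text{\rm tr}(\varepsilon[v_\eta]\varepsilon^{-1}\,{}^tA))$ becomes, using that ${}^tA=A=c\cdot 1_n$ and that $\text{\rm tr}$ is conjugation-invariant, $\widehat\tau(2^{-1}c\cdot\text{\rm tr}([v_\eta]))$, which no longer depends on $\varepsilon$; hence it is a coboundary of the function $\eta\mapsto\widehat\tau(2^{-1}c\,\text{\rm tr}([v_\eta]))^{-1}$ — wait, one must check it really collapses to $1$. The cleaner route: with an invariant $\widetilde\rho$, Proposition~\ref{prop:shur-multiplier-is-almost-coboundary} already writes $c_{\overline\beta,\rho}(\varepsilon,\eta)=\widetilde\rho(2^{-1}\varepsilon[v_\eta]\varepsilon^{-1})\widetilde\rho(2^{-1}[v_{\varepsilon\eta}])^{-1}\widetilde\rho(2^{-1}[v_\varepsilon])$, and invariance replaces $\widetilde\rho(2^{-1}\varepsilon[v_\eta]\varepsilon^{-1})$ by $\widetilde\rho(2^{-1}[v_\eta])$, so $c_{\overline\beta,\rho}=\partial\delta$ with $\delta(\varepsilon)=\widetilde\rho(2^{-1}[v_\varepsilon])$. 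But I want $c_{\overline\beta,\rho}\equiv 1$, not merely a coboundary, so I must push further: one uses the multiplication rule \eqref{eq:multiplication-formula-of-v-epsilon} together with invariance of $\widehat\tau\circ\text{\rm tr}$ under conjugation to see $\text{\rm tr}([v_{\varepsilon\eta}])=\text{\rm tr}([v_\eta])+\text{\rm tr}([v_\varepsilon])$ modulo the ambiguity landing in $\Bbb F[\overline\beta]$; since $\widetilde\rho$ kills that ambiguity (it is an extension of $\rho$, and the relevant correction terms $\gamma(v,\varepsilon)$ lie in $\Bbb F[\overline\beta]$ where $\widetilde\rho=\rho$ is controlled), $\delta$ is itself a homomorphism and $c_{\overline\beta,\rho}=1$.

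For the ``only if'' direction, suppose $\widetilde\rho$ is a $\Bbb F[\overline\beta]^{\times}$-invariant additive character of $M_n(\Bbb F)$ extending $\rho=\rho_A$. Write $\widetilde\rho(X)=\widehat\tau(\text{\rm tr}(X\,{}^tB))$ for a unique $B\in M_n(\Bbb F)$ (the trace form is nondegenerate). Invariance $\widetilde\rho(\varepsilon X\varepsilon^{-1})=\widetilde\rho(X)$ for all $\varepsilon\in\Bbb F[\overline\beta]^{\times}$ forces $\text{\rm tr}(\varepsilon X\varepsilon^{-1}\,{}^tB)=\text{\rm tr}(X\,{}^tB)$, i.e. $\text{\rm tr}(X\cdot\varepsilon^{-1}\,{}^tB\,\varepsilon)=\text{\rm tr}(X\cdot{}^tB)$ for all $X$, hence $\varepsilon^{-1}\,{}^tB\,\varepsilon={}^tB$ for every unit $\varepsilon\in\Bbb F[\overline\beta]^{\times}$, and since these units span $\Bbb F[\overline\beta]$ over $\Bbb F$ this says ${}^tB$ commutes with $\Bbb F[\overline\beta]$. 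By regularity of $\overline\beta$, the centralizer of $\Bbb F[\overline\beta]$ in $M_n(\Bbb F)$ is $\Bbb F[\overline\beta]$ itself, so ${}^tB\in\Bbb F[\overline\beta]$. Now ${}^t\overline\beta$ and $\overline\beta$ are conjugate (both are regular with the same characteristic polynomial), but more to the point ${}^t(\Bbb F[\overline\beta])=\Bbb F[{}^t\overline\beta]$, and for $\overline\beta=J_n(a)$ the transpose $J_n(a)$-algebra consists of lower-triangular Toeplitz matrices, so ${}^tB\in\Bbb F[\overline\beta]$ combined with $B\in{}^t(\Bbb F[\overline\beta])$ — no: I should argue directly. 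We have ${}^tB\in\Bbb F[\overline\beta]$, i.e. ${}^tB$ is an upper-triangular Toeplitz matrix; hence $B$ is a lower-triangular Toeplitz matrix. On the other hand $\widetilde\rho$ restricted to $\Bbb F[\overline\beta]$ must equal $\rho_A$: for $X\in\Bbb F[\overline\beta]$ one computes $\text{\rm tr}(X\,{}^tB)=\text{\rm tr}(X\,{}^tA)$, so $\text{\rm tr}(X\,{}^t(B-A))=0$ for all $X\in\Bbb F[\overline\beta]$; since the trace form on the commutative algebra $\Bbb F[\overline\beta]$ is nondegenerate (as $\text{\rm ch}\,\Bbb F>n$, cf.\ the basis $u_1,\dots,u_n$ giving $g=(\text{\rm tr}(u_iu_j))$ invertible), and $A\in\Bbb F[\overline\beta]$, this pins down the $\Bbb F[\overline\beta]$-component of $B$. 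Combining ``$B$ is lower-triangular Toeplitz'' with ``$\rho_A$ recovers $A$ from $B$'' forces both $A$ and $B$ to be diagonal Toeplitz, i.e.\ scalar matrices — hence $A$ is diagonal, as claimed; and then we are in the situation of the ``if'' direction, giving $c_{\overline\beta,\rho}(\varepsilon,\eta)=1$.

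\textbf{Main obstacle.} The crux is the last bookkeeping step of the ``only if'' direction: extracting from ``${}^tB$ centralizes $\Bbb F[\overline\beta]$'' together with ``$\widetilde\rho|_{\Bbb F[\overline\beta]}=\rho_A$'' that $A$ must be \emph{diagonal}, not merely Toeplitz. The point is that a conjugation-invariant character of $M_n(\Bbb F)$ is determined by a matrix in the centralizer of $\Bbb F[\overline\beta]$, which is $\Bbb F[\overline\beta]=\{$upper Toeplitz$\}$ after transposing; but requiring invariance is a constraint on \emph{all} of $M_n(\Bbb F)$, and matching it against $\rho_A$ on the subalgebra $\Bbb F[\overline\beta]$ is what collapses everything to the scalars. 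I would organize this by: (i) showing invariant additive characters of $M_n(\Bbb F)$ are exactly $X\mapsto\widehat\tau(c\,\text{\rm tr} X)$, $c\in\Bbb F$ — this is a clean lemma from regularity plus nondegeneracy of the trace form; (ii) noting such a character restricts on $\Bbb F[\overline\beta]$ to $\rho_{c\cdot 1_n}$, so $\rho_A=\rho_{c\cdot 1_n}$, hence $A=c\cdot 1_n$ is diagonal; (iii) invoking step (i) again (or Proposition~\ref{prop:shur-multiplier-is-almost-coboundary} as above) to get $c_{\overline\beta,\rho}\equiv 1$. Everything else is routine trace manipulation.
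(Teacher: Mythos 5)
Your skeleton for the ``only if'' direction is the same as the paper's: write the putative invariant extension as $X\mapsto\widehat\tau(\text{\rm tr}(X\,^tB))$, use invariance together with the fact that $c1_n-\overline\beta$ is a unit to force $^tB$ to commute with $\overline\beta$, invoke regularity to get $^tB\in\Bbb F[\overline\beta]$, and then compare with $\rho_A$ on $\Bbb F[\overline\beta]$ to force $A$ diagonal. That much is sound. But two of your supporting claims are wrong. First, step (i) of your closing summary --- that the $\Bbb F[\overline\beta]^{\times}$-invariant additive characters of $M_n(\Bbb F)$ are exactly $X\mapsto\widehat\tau(c\,\text{\rm tr}\,X)$ --- is false: regularity only forces the parametrizing matrix into the $n$-dimensional algebra $\Bbb F[\overline\beta]$, and every choice there gives an invariant character (e.g.\ $X\mapsto\widehat\tau(\text{\rm tr}(XJ_n(0)))$ is $\Bbb F[\overline\beta]^{\times}$-invariant and is not a multiple of the trace). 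The collapse to scalars happens only when one additionally matches the restriction to $\Bbb F[\overline\beta]$ against $\rho_A$: for $X$ and $^tB$ both upper-triangular Toeplitz one has $\text{\rm tr}(X\,^tB)=n\,x_0b_0$, whereas $\text{\rm tr}(X\,^tA)=\sum_k(n-k)x_ka_k$, and equating these for all $X$ kills $a_k$ for $k\geq 1$ (this is where $\text{\rm ch}\,\Bbb F>n$ enters). Your main-body version does route through this comparison, so the conclusion about $A$ survives; but the incidental claim that $B$ is also forced to be scalar is false (the coefficients $b_k$, $k\geq 1$, are unconstrained), and the summary as organized would not be a proof.

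Second, and more seriously, you never actually establish the identity $c_{\overline\beta,\rho}(\varepsilon,\eta)=1$, as opposed to ``$c_{\overline\beta,\rho}$ is a coboundary.'' You correctly reduce, for scalar $A=c\cdot 1_n$, to $c_{\overline\beta,\rho}(\varepsilon,\eta)=\widehat\tau\left(2^{-1}c\,\text{\rm tr}([v_{\eta}])\right)$, and then, unsure whether this equals $1$, you argue that $\delta(\varepsilon)=\widetilde\rho(2^{-1}[v_{\varepsilon}])$ is a homomorphism because ``$\widetilde\rho$ kills the ambiguity $\gamma(v,\varepsilon)\in\Bbb F[\overline\beta]$.'' That reasoning is invalid: $\widetilde\rho$ restricted to $\Bbb F[\overline\beta]$ is $\rho$, which is nontrivial, and $\rho(\gamma(v_{\eta},\varepsilon^{-1}))$ is, up to the factor $2^{-1}$, exactly the cocycle you are trying to kill (see the proof of Proposition \ref{prop:shur-multiplier-is-almost-coboundary}), so the argument is circular. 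The missing one-line observation is that in this subsection the splitting $V$ is the orthogonal complement of $\Bbb F[\overline\beta]$ for the pairing $(X,A')\mapsto\text{\rm tr}(X\,^tA')$; since $1_n\in\Bbb F[\overline\beta]$, every $[v]\in V$ satisfies $\text{\rm tr}([v])=0$, whence $\widehat\tau(2^{-1}c\,\text{\rm tr}([v_{\eta}]))=1$ directly. This is precisely how the paper concludes, via $c_{\overline\beta,\rho}(\varepsilon,\eta)=\widehat\tau(2^{-1}\text{\rm tr}([v_{\eta}]\,^tA))=1$ with $[v_{\eta}]$ orthogonal to $A\in\Bbb F[\overline\beta]$.
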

\begin{proof}
Every extensions $\widetilde\rho$ 
of $\rho$ to $M_n(\Bbb F)$ is written in the form
$$
 \widetilde\rho(X)
 =\widehat\tau\left(\text{\rm tr}(X\,^tA)\right)\cdot
  \widehat\tau\left(\langle X,X_0\rangle_{\overline\beta}\right)
$$
with certain $X_0\in M_n(\Bbb F)$. So 
$\widetilde\rho(\varepsilon X\varepsilon^{-1})=\widetilde\rho(X)$ for
all $\varepsilon\in\Bbb F[\overline\beta]^{\times}$ means that 
$B=\,^tA+X_0\overline\beta-\overline\beta X_0$ is commutative with any 
$\varepsilon\in\Bbb F[\overline\beta]^{\times}$. Since 
$c1_n-\overline\beta\in\Bbb F[\overline\beta]^{\times}$ for a suitable $c\in\Bbb F$, 
the matrix $^tA+X_0\overline\beta-\overline\beta X_0$ is commutative
with $\overline\beta$. Hence  
$B\in\Bbb F[\overline\beta]$ and 
$$
 \rho(X)=\widetilde\rho(X)=\widehat\tau\left(\text{\rm tr}(XB)\right)
$$
for all $X\in\Bbb F[\overline\beta]$. This means that $A$ is a diagonal
matrix. In this case we have
$$
 c_{\overline\beta,\rho}(\varepsilon,\eta)
 =\widehat\tau\left(2^{-1}\text{\rm tr}([v_{\eta}]\,^tA)\right)=1
$$
for all $\varepsilon, \eta\in\Bbb F[\overline\beta]^{\times}$.
\end{proof}

We can calculate the 2-cocycle $c_{\overline\beta,\rho}$ explicitly for small
$n$. Take an additive character $\rho=\rho_A$ of $\Bbb F[\overline\beta]$ with
$$
 A=\sum_{k=0}^{n-1}\rho_k J_n(0)^k\in\Bbb F[\overline\beta].
$$
For the calculation of $c_{\overline\beta,\rho}(\varepsilon,\eta)$, there is no
loss of generality if we assume that $\varepsilon$ and $\eta$ are of
the form $\exp S$ with an upper triangular
$S\in\Bbb F[\overline\beta]\subset M_n(\Bbb F)$ with diagonal elements $0$. 
In the following
examples, we will give explicit formulas for $c_{\overline\beta,\rho}$ in the
cases $n=2,3,4$.

\begin{ex}\label{ex:trivial-2-cocycle-in-n-is-2}
For $\varepsilon=\exp(\begin{bmatrix}
                       0&r\\
                       0&0
                      \end{bmatrix}), \eta=\exp(\begin{bmatrix}
                                                 0&u\\
                                                 0&0
                                                \end{bmatrix})
\in\Bbb F[\overline\beta]^{\times}$, we have
$$
 c_{\overline\beta,\eta}(\varepsilon,\eta)
 =\widehat\tau\left(2^{-1}\rho_1^2\cdot(r^2u+ru^2)\right).
$$
If $\text{\rm ch}\,\Bbb F>3$, then put
$$
 \delta(\varepsilon)
 =\widehat\tau\left(-\frac 1{2\cdot 3}\rho_1^2\cdot r^3\right)
$$
and we have
$$
 c_{\overline\beta,\eta}(\varepsilon,\eta)
 =\delta(\eta)\delta(\varepsilon\eta)^{-1}\delta(\varepsilon)
$$
for all $\varepsilon,\eta\in\Bbb F[\overline\beta]^{\times}$. 
\end{ex}

\begin{ex}\label{ex:trivial-2-cocycle-in-n-is-3}
If $\text{\rm ch}\,\Bbb F>5$, put
$$
 \delta(\varepsilon)
 =\widehat\tau\left(-\frac 12\left\{
   \frac 13\rho_1^2\cdot r^3+2\rho_1\rho_2\cdot r^2s
    +\rho_2^2\left(rs^2-\frac 1{2^2\cdot 5}r^5\right)\right\}\right)
$$
for $\varepsilon=\exp(\begin{bmatrix}
                       0&r&s\\
                       0&0&r\\
                       0&0&0
                      \end{bmatrix})\in\Bbb F[\overline\beta]^{\times}$. Then
we have
$$
 c_{\overline\beta,\rho}(\varepsilon,\eta)
 =\delta(\eta)\delta(\varepsilon\eta)^{-1}\delta(\varepsilon)
$$
for all $\varepsilon, \eta\in\Bbb F[\overline\beta]^{\times}$.
\end{ex}

\begin{ex}\label{ex:trivial-2-cocycle-in-n-is-4}
If $\text{\rm ch}\,\Bbb F>7$, put
$$
 \delta(\varepsilon)
 =\widehat\tau\left[-\frac 12\left\{
   \begin{array}{l}
    \frac 13\rho_1^2\cdot r^3+2\rho_1\rho_2\cdot r^2s
     +\rho_2^2\cdot\left(2rs^2+r^2t-\frac 1{2\cdot 3\cdot 5}r^5\right)\\
   +\rho_1\rho_3\cdot\left(2rs^2+2r^2t+\frac 1{2\cdot 3\cdot 5}r^5\right)\\
   +\rho_2\rho_3\cdot\left(4rst+\frac 43s^3-\frac 13r^4s\right)\\
   +\rho_3^2\cdot\left(s^2t+rt^2
   +\frac 1{2^2\cdot 3}r^4t-\frac 13r^3s^2+\frac 1{2^2\cdot 3^2\cdot 7}r^7\right)
   \end{array}\right\}\right]
$$
for $\varepsilon=\exp(\begin{bmatrix}
                       0&r&s&t\\
                       0&0&r&s\\
                       0&0&0&r\\
                       0&0&0&0
                      \end{bmatrix})\in\Bbb F[\overline\beta]^{\times}$. Then
we have
$$
 c_{\overline\beta,\rho}(\varepsilon,\eta)
 =\delta(\eta)\delta(\varepsilon\eta)^{-1}\delta(\varepsilon)
$$
for all $\varepsilon ,\eta\in\Bbb F[\overline\beta]^{\times}$. 
\end{ex}

These examples strongly suggest that the following conjecture is valid for
all $n\geq 2$. 

\begin{conj}\label{conj:schur-multiplier-is-trivial-in-splitting-case}
The Schur multiplier 
$[c_{\overline\beta,\rho}]\in H^2(\Bbb F[\overline\beta]^{\times},\Bbb C^{\times})$ is
trivial for all additive characters $\rho$ of $\Bbb F[\overline\beta]$ 
if the characteristic of $\Bbb F$ is big enough.
\end{conj}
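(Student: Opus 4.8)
The plan is, via standard cohomology, to reduce the vanishing of $[c_{\overline\beta,\rho}]$ to an elementary symmetry property of $c_{\overline\beta,\rho}$, then by a universality argument to a polynomial identity over $\Bbb Q$, and finally to establish that identity by induction on $n$, guided by Examples~\ref{ex:trivial-2-cocycle-in-n-is-2}--\ref{ex:trivial-2-cocycle-in-n-is-4}. First come the reductions already visible in those examples. Assume $\text{\rm ch}\,\Bbb F>n$ and write $\frak{n}=\bigoplus_{k\geq 1}\Bbb F J_n(0)^k$ for the nil radical of $\Bbb F[\overline\beta]$, so $\Bbb F[\overline\beta]^{\times}=\Bbb F^{\times}\times U$ with $U=\exp(\frak{n})$; since $\frak{n}$ is a commutative nil algebra, $\exp:(\frak{n},+)\,\tilde{\to}\,U$ is an isomorphism of \emph{abelian} groups. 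The formula $c_{\overline\beta,\rho}(\varepsilon,\eta)=\widehat\tau\left(2^{-1}\text{\rm tr}(\varepsilon[v_{\eta}]\varepsilon^{-1}\,^tA)\right)$ of \S\ref{subsec:splitting-characteristic-polynomial-case} shows, together with Proposition~\ref{prop:invariant-extension-of-additive-character}, that the scalar part of $A$ contributes nothing, so we may assume $A\in\frak{n}$; and it depends on $\varepsilon,\eta$ only through their unipotent parts, being trivial when either is $1$. Hence $c_{\overline\beta,\rho}$ is inflated from $U$, and since $\Bbb F[\overline\beta]^{\times}\to U$ is split, inflation $H^2(U,\Bbb C^{\times})\hookrightarrow H^2(\Bbb F[\overline\beta]^{\times},\Bbb C^{\times})$ is injective; so it is enough to show $[c_{\overline\beta,\rho}|_{U\times U}]=0$.

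The decisive point is that $U$ is finite abelian and $\Bbb C^{\times}$ is divisible, so a $\Bbb C^{\times}$-valued $2$-cocycle of $U$ is a coboundary \emph{if and only if it is symmetric}: coboundaries $f(\varepsilon)f(\eta)f(\varepsilon\eta)^{-1}$ are symmetric because $U$ is abelian, and a symmetric cocycle defines an abelian central extension of $U$ by the injective $\Bbb Z$-module $\Bbb C^{\times}$, which therefore splits. Thus the conjecture is equivalent to the symmetry $c_{\overline\beta,\rho}(\varepsilon,\eta)=c_{\overline\beta,\rho}(\eta,\varepsilon)$ for all $\varepsilon,\eta\in U$ and all $A\in\frak{n}$, and once this is known a trivialising cochain $\delta$ — the one written down by hand in the examples — exists automatically. (With the polarization of \S\ref{subsec:splitting-characteristic-polynomial-case} one has $c_T=1$, so this is precisely what is left of Hypothesis~\ref{hypo:tirviality-of-schur-multiplier}.)

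Now I would convert symmetry into a polynomial identity. Parametrise $U$ by $S\mapsto\exp S$ ($S\in\frak{n}$) and use the splitting $v\mapsto[v]$ of \S\ref{subsec:splitting-characteristic-polynomial-case}. Then $\exp$ and $\log$, inversion, conjugation, the non-degenerate linear system that defines $v_{\varepsilon}$, and $\text{\rm tr}$ are all given by formulas with coefficients in $\Bbb Z[1/d_n]$ for a suitable positive integer $d_n$, uniformly in $\Bbb F$; moreover $v_{\varepsilon}$ is $\Bbb F$-linear in $A$, so $c_{\overline\beta,\rho}(\exp S,\exp S')=\widehat\tau\left(P_n(S,S';A)\right)$ for a universal polynomial $P_n$ over $\Bbb Z[1/d_n]$ that is quadratic in $A$. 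Specializing to prime fields $\Bbb F=\Bbb F_p$, where $\widehat\tau$ is injective, the required symmetry — for all large $\Bbb F$, all $\tau$, all $A$ — becomes the single polynomial identity $P_n(S,S';A)=P_n(S',S;A)$ over $\Bbb Q$.

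This identity is the hard part, and the reason the statement is still a conjecture. I would attack it by induction on $n$ along the filtration $\frak{n}\supset J_n(0)\frak{n}\supset\cdots$: deleting the bottom coordinate recovers the case $n-1$, so the cocycle for $J_n$ should differ from an inflated copy of the one for $J_{n-1}$ by an explicit correction from the top filtration step, and its contribution to $P_n(S,S';A)-P_n(S',S;A)$ must cancel. Examples~\ref{ex:trivial-2-cocycle-in-n-is-2}--\ref{ex:trivial-2-cocycle-in-n-is-4} verify this for $n\leq 4$ and exhibit the trivialising $\delta$ with factorial-type denominators ($2$, $2\cdot 3$, $2^2\cdot 5$, $2\cdot 3^2\cdot 7,\dots$), which strongly suggests a closed generating-function form for $\delta$ from which the coboundary relation — a fortiori symmetry — would fall out; producing that form would be the cleanest finish. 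A more conceptual route is to read $\varepsilon\mapsto(\sigma_{\varepsilon},v_{\varepsilon})$ as a homomorphism $U\to Sp(\Bbb V_{\beta})\ltimes\Bbb V_{\beta}$ and $c_{\overline\beta,\rho}$ as the pullback of the affine metaplectic cocycle, and to split it over the parabolic fixing the Lagrangian $\Bbb W_+$ of \S\ref{subsec:splitting-characteristic-polynomial-case} (where $c_T=1$); but the naive version fails, since the translation $1$-cocycle $\varepsilon\mapsto v_{\varepsilon}$ need not be cohomologous to a $\Bbb W_+$-valued one (its class in $H^1(U,\Bbb V_{\beta}/\Bbb W_+)$ can be nonzero), so one must still see why its contribution to the commutator pairing vanishes. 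Either way, $P_n(S,S';A)=P_n(S',S;A)$ is the genuine obstacle.
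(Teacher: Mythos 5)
This statement is stated in the paper as a \emph{conjecture}: the paper offers no proof, only the explicit coboundaries of Examples \ref{ex:trivial-2-cocycle-in-n-is-2}--\ref{ex:trivial-2-cocycle-in-n-is-4} for $n\leq 4$. Your proposal likewise does not prove it, and you say so yourself. Your preparatory reductions are sound and genuinely go beyond what the paper records: the cocycle is indeed inflated from the unipotent part $U=\exp(\frak n)$ (scalars are central, so $\gamma(v,\varepsilon)$ and hence $v_{\varepsilon}$ see only the unipotent part of $\varepsilon$, and by Proposition \ref{prop:invariant-extension-of-additive-character} the scalar part of $A$ contributes $v^{(0)}_{\varepsilon}=0$, so by linearity of $A\mapsto v_{\varepsilon}$ one may take $A\in\frak n$); and since $U$ is a direct factor, inflation is injective on $H^2$. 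The criterion that a $\Bbb C^{\times}$-valued $2$-cocycle on a finite abelian group is a coboundary iff it is symmetric (the class living in $\wedge^2 U$, symmetric cocycles classifying abelian extensions, which split by divisibility of $\Bbb C^{\times}$) is correct and is a real conceptual simplification over the paper's strategy of exhibiting $\delta$ by hand: it replaces the search for a primitive by the verification of an identity. The passage to a universal polynomial identity $P_n(S,S';A)=P_n(S',S;A)$ over $\Bbb Z[1/d_n]$ is also legitimate, since $\exp$, $\log$, conjugation, the linear system defining $v_{\varepsilon}$, and the pairing are all given by universal formulas once $\text{\rm ch}\,\Bbb F>n$ is large enough to invert the relevant denominators.

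The gap is exactly where you locate it: the symmetry $P_n(S,S';A)=P_n(S',S;A)$ is asserted but not established, and the proposed induction on $n$ along the filtration $\frak n\supset J_n(0)\frak n\supset\cdots$ is only a heading, with no inductive step, no identification of the ``correction term'' from the top filtration layer, and no argument that its antisymmetric part cancels. The remark about splitting the affine metaplectic cocycle over the parabolic stabilizing $\Bbb W_+$ is honest in conceding that the translation $1$-cocycle $\varepsilon\mapsto v_{\varepsilon}$ need not be cohomologous to a $\Bbb W_+$-valued one, so that route is also open-ended. In short: the reductions are correct and worth keeping, but what remains after them is precisely the content of the conjecture (equivalently, of Hypothesis \ref{hypo:tirviality-of-schur-multiplier} in the split case), and it is not proved here. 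The only hard evidence, in your write-up as in the paper, is the case $n\leq 4$.
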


\begin{rem}\label{remark:problem-to-find-non-trivial-shur-multiplier}
It may be an interesting problem to find a regular $\overline\beta\in M_n(\Bbb F)$
whose characteristic polynomial is a power
of an irreducible polynomial over $\Bbb F$ of degree greater than 
or equal to $2$
and an additive character $\rho$ of $\Bbb F[\overline\beta]$ such that 
the Schur multiplier 
$[c_{\overline\beta,\rho}]\in H^2(\Bbb F[\overline\beta]^{\times},\Bbb C^{\times})$ is
not trivial.
\end{rem}

\section{Construction of the regular characters via Weil representation}
\label{sec:regular-character-via-weil-rep-over-finite-field}
In this section we will construct the irreducible representation 
$\pi_{\psi}$ of subsection \ref{sec:schur-multiplier} by means of
Schr\"odinber representation of Heisenberg group and Weil
representation over finite field. We will also describe the Schur
multiplier of Hypothesis
\ref{hyp:schur-multiplier-c-psi-is-trivial-for-all-psi} by means of
the Schur multiplier defined in Proposition
\ref{prop:schur-multiplier-associated-with-finite-symplectic-space}. 

Throughout this section the finite
field $\Bbb F$ is supposed to be of odd characteristic, although the
arguments of subsection \ref{sec:structure-of-g-l-1} work well without
this assumption. 

\subsection{}
\label{sec:structure-of-g-l-1}
Let us determine the 2-cocycle of the group extension
\begin{equation}
 0\to M_n(O_{l-1})\to K_{l-1}\to M_n(\Bbb F)\to 0
\label{eq:k-l-is-a-group-extension}
\end{equation}
defined by the isomorphism $K_l\,\tilde{\to}\,M_n(O_{l-1})$ 
($1_n+\varpi^lX\npmod{\frak{p}^r}\mapsto
  X\npmod{\frak{p}^{l-1}}$) and 
$K_{l-1}/K_l\,\tilde{\to}\,M_n(\Bbb F)$ induced by 
$1_n+\varpi^{l-1}X\npmod{\frak{p}^r}
 \mapsto X\npmod{\frak p}$. Fix a mapping 
$\lambda:M_n(\Bbb F)\to M_n(O)$ such that $X=\lambda(X)\npmod{\frak p}$
for all $X\in M_n(\Bbb F)$ and $\lambda(0)=0$, 
and define a mapping $l:M_n(\Bbb F)\to K_{l-1}$ by 
$X\mapsto 1_n+\varpi^{l-1}\lambda(X)\npmod{\frak{p}^r}$. Then, for
any $k=1_n+\varpi^lS\npmod{\frak{p}^r}\in K_l$, we have
\begin{align*}
 &l(X)kl(X)^{-1}\\
 &=(1_n+\varpi^{l-1}\lambda(X))(1_n+\varpi^lS)
        (1_n-\varpi^{l-1}\lambda(X)
            +\varpi^{2(l-1)}\lambda(X)^2)\nnpmod{\frak{p}^r}\\
 &=1_n+\varpi^lS\nnpmod{\frak{p}^r},
\end{align*}
and
$$
 l(X)l(Y)l(X+Y)^{-1}
 =1_n+\varpi^l\left(\varpi^{l-2}\lambda(X)\lambda(Y)+\mu(X,Y)\right)
  \nnpmod{\frak{p}^r}
$$
for all $X,Y\in M_n(\Bbb F)$, where 
$\mu:M_n(\Bbb F)\times M_n(\Bbb F)\to M_n(O)$ is defined by
$$
 \lambda(X)+\lambda(Y)-\lambda(X+Y)=\varpi\mu(X,Y).
$$
So the 2-cocycle of the group extension
\eqref{eq:k-l-is-a-group-extension} is
$$
 [(\overline X,\widehat Y)\mapsto
  \varpi^{l-2}XY+\mu(\overline X,\widehat Y)\nnpmod{\frak{p}^{l-1}}]
 \in Z^2(M_n(\Bbb F),M_n(O_{l-1}))
$$
with the trivial action of $M_n(\Bbb F)$ on $M_n(O_{l-1})$, where 
$\overline X=X\npmod{\frak p}\in M_n(\Bbb F)$. Now we have two
2-cocycles
\begin{align*}
 c&=[(\overline X,\widehat Y)\mapsto
       \varpi^{l-2}XY\nnpmod{\frak{p}^{l-1}}]
    \in Z^2(M_n(\Bbb F),M_n(O_{l-1}))\;\text{\rm and}\\
 \mu&=[(X,Y)\mapsto\mu(X,Y)\nnpmod{\frak{p}^{l-1}}]
    \in Z^2(M_n(\Bbb F),M_n(O_{l-1})).
\end{align*}
Let us denote by $\Bbb G$ and $\Bbb M$ the groups associated with the
2-cocycles $c$ and $\mu$ respectively. More precisely, the group operation on 
$\Bbb G=M_n(\Bbb F)\times M_n(O_{l-1})$ is defined by
$$
 (\overline X,\overline S)\cdot(\widehat Y,\overline T)
 =((X+Y)\wsphat,S+T+\varpi^{l-2}XY\nnpmod{\frak{p}^{l-1}})
$$
and the group operation on $\Bbb M=M_n(\Bbb F)\times M_n(O_{l-1})$ is
defined by
$$
 (X,\overline S)\cdot(Y,\overline T)
 =(X+Y,S+T+\mu(X,Y)\nnpmod{\frak{p}^{l-1}}).
$$
The fiber product of $\Bbb G$ and $\Bbb M$ with respect to the
projections $\Bbb G\to M_n(\Bbb F)$ and $\Bbb M\to M_n(\Bbb F)$ is
denoted by $\Bbb G\times_{M_n(\Bbb F)}\Bbb M$. That is 
$\Bbb G\times_{M_n(\Bbb F)}\Bbb M$ is the subgroup of
the direct product $\Bbb G\times\Bbb M$ consisting of the elements 
$$
 (X;S,S^{\prime})=((X,S),(X,S^{\prime})).
$$
Then we have a surjective group homomorphism 
$\Bbb G\times_{M_n(\Bbb F)}\Bbb M\to K_{l-1}$ 
\begin{equation}
 (X;\overline S,\overline{S^{\prime}})
  \mapsto
  1_n+\varpi^{l-1}\lambda(X)+\varpi^l(S+S^{\prime})\nnpmod{\frak{p}^r})
\label{eq:surjection-of-fiber-product-to-k-l-1}
\end{equation}
whose kernel 
$\langle\Bbb G,\Bbb M\rangle$ consists of the elements $(0;S,-S)$. 
A morphism 
$$
 \text{\rm tr}_{\beta}:M_n(O_{l-1})\to O_{l-1}
 \quad
 (X\nnpmod{\frak{p}^{l-1}}\mapsto
  \text{\rm tr}(X\beta)\nnpmod{\frak{p}^{l-1}})
$$
of additive groups induces a morphism 
$$
 \text{\rm tr}^{\ast}_{\beta}:H^2(M_n(\Bbb F),M_n(O_{l-1}))
  \to H^2(M_n(\Bbb F),O_{l-1})
$$
of cohomology groups. Here $M_n(\Bbb F)$ acts on $O_{l-1}$ trivially. 
The group associated with the 2-cocycle 
$c_{\beta}=c\circ\text{\rm tr}_{\beta}\in Z^2(M_n(\Bbb F),O_{l-1})$ is 
$\mathcal{H}_{\beta}=M_n(\Bbb F)\times O_{l-1}$ with group operation
$$
 (\overline X,s)\cdot(\widehat Y,t)
 =((X+Y)\wsphat,s+t+\varpi^{l-2}\text{\rm tr}(XY\beta)
                                            \nnpmod{\frak{p}^{l-1}}).
$$
We have a surjective group homomorphism
\begin{equation}
 \Bbb G\times_{M_n(\Bbb F)}\Bbb M\to\mathcal{H}_{\beta}
 \qquad
 ((X;\overline S,\overline{S^{\prime}})\mapsto
  (X,\overline{\text{\rm tr}(S\beta)}))
\label{eq:surjection-of-fiber-product-to-mathcal-h-beta}
\end{equation}
whose kernel is 
\begin{equation}
 \{(0;\overline S,\overline{S^{\prime}})\mid
    \text{\rm tr}(S\beta)\equiv 0\nnpmod{\frak{p}^{l-1}}\}.
\label{eq:kernel-of-projection-of-fibre-product-to-mathcal-h-beta}
\end{equation}
The center of $\mathcal{H}_{\beta}$ is 
$Z(\mathcal{H}_{\beta})=\Bbb F[\overline\beta]\times O_{l-1}$, in fact 
$(X,s)\in Z(\mathcal{H}_{\beta})$ is equivalent to 
$\langle X,Y\rangle_{\overline\beta}=0$ for all $Y\in M_n(\Bbb F)$,
which is equivalent to $X\in\Bbb F[\overline\beta]$. Let us denote by 
$(\Bbb G\times_{M_n(\Bbb F)}\Bbb M)(\Bbb F[\overline\beta])$ 
he inverse image
of the center $Z(\mathcal{H}_{\beta})$ by the surjection 
\eqref{eq:surjection-of-fiber-product-to-mathcal-h-beta}. Then 
$(\Bbb G\times_{M_n(\Bbb F)}\Bbb M)(\Bbb F[\overline\beta])$ is projected
onto $K_{l-1}(\Bbb F[\overline\beta])$ by the surjection 
\eqref{eq:surjection-of-fiber-product-to-k-l-1}. We have a group
homomorphism
$$
 \psi_0:\Bbb M\to\Bbb C^{\times}
 \qquad
 ((X,\overline{S^{\prime}})\mapsto
  \tau\left(\varpi^{-l}\text{\rm tr}(\lambda(X)
                          +\varpi S^{\prime})\beta\right))
$$
which induces a group homomorphism 
$\widetilde{\psi}_0:\Bbb G\times_{M_n(\Bbb F)}\Bbb M\to\Bbb C^{\times}$
via the projection $\Bbb G\times_{M_n(\Bbb F)}\Bbb M\to\Bbb M$. 

Take
any $\psi\in Y(\psi_{\beta})$. Combining with the projection 
\eqref{eq:surjection-of-fiber-product-to-k-l-1}, we define a group
homomorphism 
$\widetilde\psi:(\Bbb G\times_{M_n(\Bbb F)}\Bbb M)(\Bbb F[\overline\beta])
\to\Bbb C^{\times}$. 
Then $\widetilde\psi_0^{-1}\cdot\widetilde\psi$ induces a group
homomorphism $\rho$ of
$Z(\mathcal{H}_{\beta})$ to $\Bbb C^{\times}$ because 
$\widetilde\psi_0^{-1}\cdot\widetilde\psi$ is trivial on the kernel 
\eqref{eq:kernel-of-projection-of-fibre-product-to-mathcal-h-beta}. 
The inverse image of $K_l$ under the surjection 
\eqref{eq:surjection-of-fiber-product-to-k-l-1} is projected onto 
$O_{l-1}\subset Z(\mathcal{H}_{\beta})$ by the surjection 
\eqref{eq:surjection-of-fiber-product-to-mathcal-h-beta}, 
and $\psi|_{K_l}=\psi_{\beta}$
means that $\rho|_{O_{l-1}}=\psi_1$ where 
$$
 \psi_1:O_{l-1}\to\Bbb C^{\times}
 \qquad
 (s\nnpmod{\frak{p}^{l-1}}\mapsto\tau(\varpi^{-(l-1)}s))
$$
Thus we have a bijection of $Y(\psi_{\beta})$ onto the subset of 
the character group of $Z(\mathcal{H}_{\beta})$ consisting of the
extensions of $\psi_1$.

Let us consider the action of 
$\mathcal{C}=\left(O_r[\beta_r]\right)^{\times}$ on the groups
$\Bbb G\times_{M_n(\Bbb F)}\Bbb M$ and $\mathcal{H}_{\beta}$. For any 
$(\overline X;\overline S,\overline{S^{\prime}})\in
 \Bbb G\times_{M_n(\Bbb F)}\Bbb M$ and 
$\varepsilon\in  O[\beta]^{\times}$, we have
\begin{align*}
 \varepsilon^{-1}(1_n+\varpi^{l-1}\lambda(\overline X)
                    &+\varpi^l(S+S^{\prime}))\varepsilon\\
 &=1_n+\varpi^{l-1}\lambda((\varepsilon^{-1}X\varepsilon)\sphat)
     +\varpi^l(\varepsilon^{-1}(S+S^{\prime})\varepsilon
               +\nu(\overline X,\varepsilon))
\end{align*}
where $\nu:M_n(\Bbb F)\times\mathcal{C}\to M_n(O)$ is defined by
$$
 \varepsilon^{-1}\lambda(X)\varepsilon
 -\lambda(\varepsilon^{-1}X\varepsilon)
 =\varpi\cdot\nu(X,\varepsilon).
$$
Then $\overline\varepsilon\in\mathcal{C}$ acts on 
$(\overline X;\overline S,\overline{S^{\prime}})
 \in\Bbb G\times_{M_n(\Bbb F)}\Bbb M$ by
$$
 (\overline X;\overline S,\overline{S^{\prime}})^{\varepsilon}
 =((\varepsilon^{-1}X\varepsilon)\wsphat;
     \overline{\varepsilon^{-1}S\varepsilon},
      \overline{\varepsilon^{-1}S^{\prime}\varepsilon
                             +\nu(\overline X,\varepsilon)}),
$$
and on 
$(\overline X,s)\in\mathcal{H}_{\beta}$ by 
$(\overline X,s)^{\varepsilon}
 =((\varepsilon^{-1}X\varepsilon)\wsphat,s)$. We have
$$
 \widetilde\psi_0((X;S,S^{\prime})^{\varepsilon})
 =\widetilde\psi_0(X;S,S^{\prime}).
$$

\subsection{}
\label{sec:non-dyadic-case}
Let us modify the group structure on $\mathcal{H}_{\beta}$ into a form
more suitable for further examinations. The coboundary of a mapping
$$
 \delta:M_n(\Bbb F)\to O_{l-1}
 \quad
(X\nnpmod{\frak p}\mapsto 
  2^{-1}\varpi^{l-2}\text{\rm tr}(X^2\beta)\nnpmod{\frak{p}^{l-1}})
$$
is
\begin{align*}
 \partial\delta(\overline X,\widehat Y)
 &=\delta(\widehat Y)-\delta((X+Y)\wsphat)+\delta(\overline X)\\
 &\equiv
  -2^{-1}\varpi^{l-s}\text{\rm tr}((XY+YX)\beta)\nnpmod{\frak{p}^{l-1}}
\end{align*}
so that we will redefine the group operation on 
$\mathcal{H}_{\beta}=M_n(\Bbb F)\times O_{l-1}$ 
to be associated with the $2$-cocycle $c_{\beta}+\partial\beta$, that
is 
$$
 (\overline X,s)\cdot(\widehat Y,t)
 =((X+Y)\wsphat,
   s+t+\overline{2^{-1}\varpi^{l-2}\text{\rm tr}((XY-YX)\beta)}).
$$
Under this group operation, the center $\mathcal{H}_{\beta}$ is 
$Z(\mathcal{H}_{\beta})=\Bbb F[\overline\beta]\times O_{l-1}$, and for
any $(X,s)\in Z(\mathcal{H}_{\beta})$ and 
$(Y,t)\in\mathcal{H}_{\beta}$ we have
$$
 (X,s)\cdot(Y,t)=(X+Y,s+t).
$$
In particular $Z(\mathcal{H}_{\beta})$ is the
direct product of additive groups $\Bbb F[\overline\beta]$ and
$O_{l-1}$. 
The surjection 
\eqref{eq:surjection-of-fiber-product-to-mathcal-h-beta} is now
\begin{equation}
 (\overline X;\overline S,\overline{S^{\prime}})
 \mapsto
 (\overline X,\overline{\text{\rm tr}(S\beta)
                       -2^{-1}\varpi^{l-2}\text{\rm tr}(X^2\beta)}).
\label{eq:new-surjection-of-fiber-product-to-mathcal-h-beta}
\end{equation}
We have a central extension
\begin{equation}
 0\to Z(\mathcal{H}_{\beta})\to\mathcal{H}_{\beta}
  \xrightarrow{(\ast)}\Bbb V_{\beta}\to 0
\label{eq:mathcal-h-beta-is-central-extension}
\end{equation}
where $(\ast):(X,s)\mapsto \dot X=X\npmod{\Bbb F[\overline\beta]}$.
Let us determine the 2-cocycle of this group extension. Fix an 
$\Bbb F$-vector subspace $V_{\beta}\subset M_n(\Bbb F)$ such that 
$M_n(\Bbb F)=V_{\beta}\oplus\Bbb F[\overline\beta]$, and let 
$[x]\in V_{\beta}$ be the representative of $x\in\Bbb V_{\beta}$. Put
$$
 l:\Bbb V_{\beta}\to\mathcal{H}_{\beta}.
 \quad
 (x\mapsto([x],0))
$$
Then for any $X,Y\in\Bbb V_{\beta}$ we have
$$
 l(x)l(y)l(x+y)^{-1}
 =(0,2^{-1}\varpi^{l-2}
     \text{\rm tr}((XY-YX)\beta)\nnpmod{\frak{p}^{l-1}})
$$
where $[x]=\overline{X}, [y]=\overline{Y}$ with $X,Y\in M_n(O)$. 
So the 2-cocycle of the central extension 
\eqref{eq:mathcal-h-beta-is-central-extension} is 
$$
 [(\dot{\overline X},\dot{\widehat Y})\mapsto
   (0,2^{-1}\varpi^{l-2}\text{\rm tr}((XY-YX)\beta)
                          \nnpmod{\frak{p}^{l-1}})]
 \in Z^2(\Bbb V_{\beta},Z(\mathcal{H}_{\beta})).
$$
The group operation on
$\Bbb H_{\beta}=\Bbb V_{\beta}\times Z(\mathcal{H}_{\beta})$ defined 
by this $2$-cocycle is 
$$
 (x,s)\cdot(y,t)
 =(x+y,s+t+
  (0,2^{-1}\varpi^{l-2}\text{\rm tr}((XY-YX)\beta)\nnpmod{\frak{p}^{l-1}}))
$$
where $x=\dot{\overline X}\in\Bbb V_{\beta}$ with $X\in M_n(O)$ 
etc. The group  $\Bbb H_{\beta}$ is isomorphic to 
$\mathcal{H}_{\beta}$ by $(x,(Y,s))\mapsto([x]+Y,s)$. 

Let $\Bbb V_{\beta}=\Bbb W^{\prime}\oplus\Bbb W$ be a polarization of
the symplectic $\Bbb F$-space $\Bbb V_{\beta}$. Then we have defined 
in the subsection
\ref{subsec:schrodinger-rep-associated-with-polarization} 
the Schr\"odinger representation $(\pi_{\beta},L^2(\Bbb W^{\prime}))$ 
of the Heisenberg group $H(\Bbb V_{\beta})$ associated with the
polarization. We have also defined, for each 
$\sigma\in Sp(\Bbb V_{\beta})$, an
element $T_{\widehat\tau}(\sigma)\in GL_{\Bbb C}(L^2(\Bbb W^{\prime}))$ such that 
$$
 \pi_{\beta}(u\sigma,s)
 =T_{\widehat\tau}(\sigma)^{-1}\circ\pi_{\beta}(u,s)\circ 
  T_{\widehat\tau}(\sigma)
$$
for all $(u,s)\in H(\Bbb V_{\beta})$. 

Take a $\psi\in Y(\psi_{\beta})$. Then, as described in the
preceding section, the group homomorphism 
$\widetilde\psi_0^{-1}\cdot\widetilde\psi$ corresponds to a group
homomorphism $\rho\otimes\psi_1$ of 
$Z(\mathcal{H}_{\beta})=\Bbb F[\overline\beta]\times O_{l-1}$ where 
$\rho$ is a group homomorphism of the additive group 
$\Bbb F[\overline\beta]$ to $\Bbb C^{\times}$. More explicitly 
$$
 \rho(\overline X)
 =\tau\left(2^{-1}\varpi^{-1}\text{\rm tr}(X^2\beta)
            -\varpi^{-l}\text{\rm tr}(\lambda(\overline X)\beta)
                                                   \right)
  \cdot\psi(1_n+\varpi^{l-1}\lambda(\overline X))
$$
for $\overline X\in\Bbb F[\overline\beta]$, or
\begin{equation}
 \psi(\overline g)
 =\rho(\overline X)\cdot
  \tau\left(\varpi^{-l}\text{\rm tr}(X\beta)
            -2^{-1}\varpi^{-1}\text{\rm tr}(X^2\beta)\right)
\label{eq:character-of-k-l-1-f-beta-associated-with-rho}
\end{equation}
for 
$\overline g=1_n+\varpi^{l-1}X\npmod{\frak{p}^r}\in
 K_{l-1}(\Bbb F[\overline\beta])$. 
Then the irreducible
representation $(\pi_{\beta},L^2(\Bbb W^{\prime}))$ of 
$H(\Bbb V_{\beta})$ combined with the
isomorphism $\mathcal{H}_{\beta}\,\tilde{\to}\,\Bbb H_{\beta}$ and the
group homomorphism 
$$
 \Bbb H_{\beta}\to H(\Bbb V_{\beta})
 \qquad
 ((v,z)\mapsto(v,\rho\otimes\psi_1(z)))
$$
defines an irreducible representation 
$(\pi_{\beta,\rho},L^2(\Bbb W^{\prime}))$ of $\mathcal{H}_{\beta}$ such
that $\pi_{\beta,\rho}(z)=\rho\otimes\psi_1(z)$ for all 
$z\in Z(\mathcal{H}_{\beta})$. Then $\pi_{\beta,\rho}$ combined with 
the surjection
\eqref{eq:new-surjection-of-fiber-product-to-mathcal-h-beta} defines
an irreducible representation 
$(\widetilde\pi_{\beta,\rho},L^2(\Bbb W^{\prime}))$ of 
$\Bbb G\times_{M_n(\Bbb F)}\Bbb M$ such that 
$\widetilde\pi_{\beta,\rho}(g)=\widetilde\psi_0^{-1}\cdot\widetilde\psi(g)$ for
all $g\in(\Bbb G\times_{M_n(\Bbb F)}\Bbb M)(\Bbb F[\overline\beta])$. Because 
$\widetilde\psi_0\cdot\widetilde\pi_{\beta,\rho}$ is trivial on 
$\langle\Bbb G,\Bbb M\rangle$, the representation 
$\widetilde\psi_0\cdot\widetilde\pi_{\beta,\rho}$ of 
$\Bbb G\times_{M_n(\Bbb F)}\Bbb M$ induces an irreducible
representation $(\pi_{\beta,\psi},L^2(\Bbb W^{\prime}))$ of $K_{l-1}$
such that $\pi_{\beta,\psi}(k)=\psi(k)$ for all 
$k\in K_{l-1}(\Bbb F[\overline\beta])$. 

Let us consider the action of $\mathcal{C}$ on $\Bbb H_{\beta}$ and on 
$\pi_{\beta,\rho}$. Take an $\varepsilon\in\mathcal{C}$. Let us denote
by $\overline\varepsilon\in\Bbb F[\overline\beta]^{\times}$ the image of 
$\varepsilon\in\mathcal{C}$ by the canonical surjection 
$\mathcal{C}\to\Bbb F[\overline\beta]^{\times}$. 
Because $\sigma_{\overline\varepsilon}\in Sp(\Bbb V_{\beta})$,  put
$$
 T(\varepsilon)=T_{\widehat\tau}(\sigma_{\overline\varepsilon})
 \in GL_{\Bbb C}(L^2(\Bbb W^{\prime})).
$$
Then, with the notations of the subsection 
\ref{subsec:general-setting-of-finite-symplectic-schur-multiplier}
$$
 T(\varepsilon)\circ T(\eta)
 =c_T(\overline\varepsilon,\overline\eta)\cdot
  T(\varepsilon\eta)
$$
for all $\varepsilon ,\eta\in\mathcal{C}$. 
On the other hand, for any $(x,(Y,s))\in\Bbb H_{\beta}$, we have
$$
 ([x]+Y,s)^{\varepsilon}
 =([\overline\varepsilon^{-1}x\overline\varepsilon]
               +Y+\gamma(x,\overline\varepsilon),s)
$$
where we use the notations of subsection 
\ref{subsec:general-setting-of-finite-symplectic-schur-multiplier}. 
So $\varepsilon\in\mathcal{C}$ acts on 
$(x,(Y,s))\in\Bbb H_{\beta}$ by
\begin{align*}
 (x,(Y,s))^{\varepsilon}
 &=(\overline\varepsilon^{-1}x\overline\varepsilon,
       (Y+\gamma(x,\overline\varepsilon),s))\\
 &=(x\sigma_{\overline\varepsilon},(Y,s))\cdot
   (0,(\gamma(x,\overline\varepsilon),0)).
\end{align*}
Then we have
$$
 \pi_{\beta,\rho}((X,s)^{\varepsilon})
 =\rho(\gamma([\dot X],\overline\varepsilon))\cdot
   T(\varepsilon)^{-1}\circ\pi_{\beta,\rho}(X,s)\circ
    T(\varepsilon)
$$
for all $(X,s)\in\mathcal{H}_{\beta}$. We have, with the notations of
subsection 
\ref{subsec:general-setting-of-finite-symplectic-schur-multiplier},
$$
 \rho(\gamma(x,\overline\varepsilon))
 =\widehat\tau\left(
    \langle x,v_{\overline\varepsilon}\rangle_{\overline\beta}\right)
$$
for all $x\in\Bbb V_{\beta}$. Since 
$$
 (v_{\beta},1)^{-1}(x,s)(v_{\beta},1)
 =(x,s\cdot\tau\left(\varpi^{-1}
         \langle x,v_{\beta}\rangle_{\overline\beta}\right))
$$
for all $(x,s)\in H(\Bbb V_{\beta})$, we have
$$
 \pi_{\beta,\rho}((X,s)^{\varepsilon})
 =U(\varepsilon)^{-1}\circ\pi_{\beta,\rho}(X,s)\circ
   U(\varepsilon)
$$
for all $(X,s)\in\mathcal{H}_{\beta}$ where we put
$$
 U(\varepsilon)
 =T(\varepsilon)\circ\pi_{\beta}(v_{\beta},1)
 \in GL_{\Bbb C}(L^2(\Bbb W^{\prime})).
$$
Then we have
$$
 \pi_{\beta,\psi}(\varepsilon^{-1}k\varepsilon)
       =U(\varepsilon)^{-1}\circ\pi_{\beta,\psi}(k)\circ 
        U(\varepsilon)
$$
for all $k\in K_{l-1}$ and
\begin{equation}
 U(\varepsilon)\circ U(\eta)
 =c_{\overline\beta,\rho}(\overline\varepsilon,\overline\eta)
  c_T(\overline\varepsilon,\overline\varepsilon)\cdot
  U(\varepsilon\eta)
\label{eq:explicit-formula-of-2-cocycle-of-u}
\end{equation}
for all $\varepsilon, \eta\in\mathcal{C}$. Furthermore 
$U(\varepsilon)=1$ for all 
$\varepsilon\in\mathcal{C}\cap K_{l-1}$ because 
$\mathcal{C}\cap K_{l-1}\subset K_{l-1}(\Bbb F[\overline\beta])$ and the
action of $\varepsilon\in\mathcal{C}\cap K_{l-1}$ on
$\mathcal{H}_{\beta}$ is trivial.

Now we have constructed explicitly $U(\varepsilon)$ of the formula 
\eqref{eq:conjugate-formula-of-pi-psi}, and comparing 
\eqref{eq:2-cocycle-of-u} with 
\eqref{eq:explicit-formula-of-2-cocycle-of-u}, we have
\begin{equation}
 c_U(\varepsilon,\eta)
 =c_{\overline\beta,\rho}(\overline\varepsilon,\overline\eta)\cdot
  c_T(\overline\varepsilon,\overline\eta)
\label{eq:explicit-formula-of-schur-multiplier-associated-with-rep}
\end{equation}
for all $\varepsilon ,\eta\in\mathcal{C}$. Then Theorem 
\ref{th:hypothesis-is-valid-for-separable-chara-poly-case} shows 

\begin{thm}
\label{th:original-hypothesis-is-valid-if-chara-poly-is-separable}
Hypothesis \ref{hyp:schur-multiplier-c-psi-is-trivial-for-all-psi} is
valid if the characteristic polynomial of 
$\overline\beta\in M_n(\Bbb F)$ is separable.
\end{thm}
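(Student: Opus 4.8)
The plan is to deduce this from the explicit description of the Schur multiplier obtained in section \ref{sec:regular-character-via-weil-rep-over-finite-field} together with Theorem \ref{th:hypothesis-is-valid-for-separable-chara-poly-case}. First I would recall that, by the construction of subsections \ref{sec:structure-of-g-l-1} and \ref{sec:non-dyadic-case}, each $\psi\in Y(\psi_{\beta})$ determines an additive character $\rho$ of the additive group $\Bbb F[\overline\beta]$ through the formula \eqref{eq:character-of-k-l-1-f-beta-associated-with-rho}, and that for this $\psi$ one may take the intertwining operators $U(\varepsilon)$ of \eqref{eq:conjugate-formula-of-pi-psi} to be $T(\varepsilon)\circ\pi_{\beta}(v_{\beta},1)$. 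Then the formula \eqref{eq:explicit-formula-of-schur-multiplier-associated-with-rep} gives $c_U(\varepsilon,\eta)=c_{\overline\beta,\rho}(\overline\varepsilon,\overline\eta)\,c_T(\overline\varepsilon,\overline\eta)$ for all $\varepsilon,\eta\in\mathcal{C}$, where $\overline\varepsilon$ is the image of $\varepsilon$ under the canonical surjection $\mathcal{C}\to\Bbb F[\overline\beta]^{\times}$.

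The key point is then to read this identity as saying that the 2-cocycle $c_U$ is the inflation (pullback) of the 2-cocycle $c_{\overline\beta,\rho}c_T\in Z^2(\Bbb F[\overline\beta]^{\times},\Bbb C^{\times})$ along the group homomorphism $\mathcal{C}\to\Bbb F[\overline\beta]^{\times}$. Since inflation induces a homomorphism $H^2(\Bbb F[\overline\beta]^{\times},\Bbb C^{\times})\to H^2(\mathcal{C},\Bbb C^{\times})$ of cohomology groups, the class $c(\psi)$ is the image of $[c_{\overline\beta,\rho}c_T]$ under this map; in particular $c(\psi)$ is trivial as soon as $[c_{\overline\beta,\rho}c_T]$ is. Finally, under the separability hypothesis Theorem \ref{th:hypothesis-is-valid-for-separable-chara-poly-case} asserts exactly that $[c_{\overline\beta,\rho}c_T]$ is trivial for every additive character $\rho$ of $\Bbb F[\overline\beta]$, hence a fortiori for the $\rho$ attached to each $\psi\in Y(\psi_{\beta})$, and Hypothesis \ref{hyp:schur-multiplier-c-psi-is-trivial-for-all-psi} follows.

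I do not expect a genuine obstacle here: the content is already packaged into the machinery of the previous two sections. The only point demanding care is to verify that the particular choice $U(\varepsilon)=T(\varepsilon)\circ\pi_{\beta}(v_{\beta},1)$ is indeed a legitimate family of intertwiners for \eqref{eq:conjugate-formula-of-pi-psi}, so that its cocycle \eqref{eq:2-cocycle-of-u} represents $c(\psi)$, and that the independence of $c(\psi)$ from the choice of the $U(\varepsilon)$ — already noted in subsection \ref{sec:schur-multiplier} — makes the identification of cohomology classes unambiguous; both are immediate from the computations of subsection \ref{sec:non-dyadic-case} leading to \eqref{eq:explicit-formula-of-schur-multiplier-associated-with-rep}.
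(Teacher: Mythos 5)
Your proposal is correct and follows essentially the same route as the paper: the paper's proof of this theorem is precisely to combine the explicit formula \eqref{eq:explicit-formula-of-schur-multiplier-associated-with-rep}, which exhibits $c_U$ as the pullback of $c_{\overline\beta,\rho}c_T$ along $\mathcal{C}\to\Bbb F[\overline\beta]^{\times}$, with Theorem \ref{th:hypothesis-is-valid-for-separable-chara-poly-case}. Your explicit mention of the inflation map on $H^2$ only spells out a step the paper leaves implicit.
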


\subsection{}
\label{subsec:regular-split-beta-in-non-dyadic-case}
In this subsection, we will assume that $\overline\beta\in M_n(\Bbb F)$
is regular split and that $\overline\beta$ is in the Jordan
canonical form \eqref{eq:beta-mod-p-is-in-jordan-canonical-form}. 
Assume also that the characteristic of $\Bbb F$ is odd. 
Then the formula 
\eqref{eq:explicit-formula-of-schur-multiplier-associated-with-rep} is
based upon the canonical polarization 
$\Bbb V_{\beta}=\Bbb W_-\oplus\Bbb W_+$ given 
at the beginning of the subsection 
\ref{subsec:splitting-characteristic-polynomial-case}. 
In this case  we have $c_T(\varepsilon,\eta)=1$
for all $\varepsilon, \eta\in\mathcal{C}$. 

Now we will reconsider the results of section 
\ref{sec:remark-on-a-result-of-g-hill}. For a diagonal matrix 
$A$ in $\Bbb F[\overline\beta]$, define an element 
$\psi_A\in X_0(\psi_{\beta})$ by
$$
 \psi_A(g)
 =\tau\left(\varpi^{-l}\text{\rm tr}(X\beta)
            -2^{-1}\varpi^{-1}\text{\rm tr}(X^2\beta)
            +\varpi^{-1}\text{\rm tr}(XA)\right)
$$
for $g=\overline{1_n+\varpi^{l-1}X}\in K_{l-1}(W)$. Then 
$\psi=\psi_A|_{K_{l-1}(\Bbb F[\overline\beta])}\in Y(\psi_{\beta})$
corresponds to the additive character $\rho=\rho_A$ of 
$\Bbb F[\overline\beta]$ by 
\eqref{eq:character-of-k-l-1-f-beta-associated-with-rho}. For such a 
$\psi\in Y(\psi_{\beta})$, we have $c(\psi)=1$ in
$H^2(\mathcal{C},\Bbb C^{\times})$ by Proposition 
\ref{prop:invariant-extension-of-additive-character} and by 
\eqref{eq:explicit-formula-of-schur-multiplier-associated-with-rep}. 
On the other hand the number
of the diagonal elements in $\Bbb F[\overline\beta]$ is $q^r$, and the
remark just after Proposition 
\ref{prop:k-l-1-conjugate-of-psi-in-x-0-psi-beta} shows that the
characters 
$\psi_A$ with diagonal matrix $A\in\Bbb F[\overline\beta]$ are the
complete set 
of the representatives of the $K_{l-1}$-orbits in $X(\psi_{\beta})$
which contains some element of $X_0(\psi_{\beta})$. This means that 
Hypothesis \ref{hyp:schur-multiplier-c-psi-is-trivial-for-all-psi} is
valid if $\overline\beta\in M_n(\Bbb F)$ is regular split and
semi-simple (that is the case where the argument in the proof of Theorem
4.6 of \cite{Hill1995-2} works), 
but this is a special case of Theorem 
\ref{th:original-hypothesis-is-valid-if-chara-poly-is-separable}. 

The examples 
\ref{ex:trivial-2-cocycle-in-n-is-2}, 
\ref{ex:trivial-2-cocycle-in-n-is-3}, 
\ref{ex:trivial-2-cocycle-in-n-is-4} show

\begin{thm}\label{th:original-hypothesis-is-valid-in-split-small-n}
Hypothesis \ref{hyp:schur-multiplier-c-psi-is-trivial-for-all-psi} is
valid if $n_i\leq 4$ ($i=1,\cdots,r$) and $\text{\rm ch}\,\Bbb F>7$.
\end{thm}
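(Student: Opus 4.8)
The plan is to deduce the theorem from the explicit coboundary computations of Examples \ref{ex:trivial-2-cocycle-in-n-is-2}, \ref{ex:trivial-2-cocycle-in-n-is-3} and \ref{ex:trivial-2-cocycle-in-n-is-4}, together with the multiplicative reduction of subsection \ref{subsec:symplectic-schur-multiplier-reduced-to-basic-cases}. Fix $\beta\in M_n(O)$ with $\overline\beta\in M_n(\Bbb F)$ regular split in the Jordan form \eqref{eq:beta-mod-p-is-in-jordan-canonical-form} and $\text{\rm ch}\,\Bbb F>7$, and take an arbitrary $\psi\in Y(\psi_{\beta})$. By subsection \ref{sec:non-dyadic-case}, $\psi$ corresponds to an additive character $\rho$ of $\Bbb F[\overline\beta]$, and the formula \eqref{eq:explicit-formula-of-schur-multiplier-associated-with-rep} writes the $2$-cocycle $c_U$ on $\mathcal{C}$ as $c_U(\varepsilon,\eta)=c_{\overline\beta,\rho}(\overline\varepsilon,\overline\eta)\,c_T(\overline\varepsilon,\overline\eta)$, which factors through $\mathcal{C}\to\Bbb F[\overline\beta]^{\times}$; hence $c(\psi)$ is trivial in $H^2(\mathcal{C},\Bbb C^{\times})$ as soon as $[c_{\overline\beta,\rho}c_T]$ is trivial in $H^2(\Bbb F[\overline\beta]^{\times},\Bbb C^{\times})$. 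So the whole problem is reduced to the symplectic Schur multiplier over $\Bbb F$.

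First I would dispose of $c_T$: since $\overline\beta$ is in the form \eqref{eq:beta-mod-p-is-in-jordan-canonical-form}, the canonical polarization $\Bbb V_{\beta}=\Bbb W_-\oplus\Bbb W_+$ of subsection \ref{subsec:splitting-characteristic-polynomial-case} satisfies $\Bbb W_+\sigma_{\overline\varepsilon}=\Bbb W_+$ for all $\varepsilon$, whence $c_T(\overline\varepsilon,\overline\eta)=1$ as already recorded in subsection \ref{subsec:regular-split-beta-in-non-dyadic-case}, and it remains to prove $[c_{\overline\beta,\rho}]=1$. Decomposing $\overline\beta=\text{\rm diag}(J_{n_1}(a_1),\dots,J_{n_f}(a_f))$ into its distinct-eigenvalue blocks gives $\Bbb F[\overline\beta]=\prod_i\Bbb F[\overline\beta_i]$ and $\rho=\bigotimes_i\rho_i$ with $\rho_i$ an additive character of $\Bbb F[\overline\beta_i]=\Bbb F[J_{n_i}(a_i)]$; choosing the $\Bbb F$-linear splitting of \eqref{eq:canonical-exact-seq-of-v-beta} to contain $M_{\beta}$, formula \eqref{eq:reduction-of-c-beta-rho-to-basic-case} gives $c_{\overline\beta,\rho}(\varepsilon,\eta)=\prod_i c_{\overline\beta_i,\rho_i}(\varepsilon_i,\eta_i)$, so $[c_{\overline\beta,\rho}]=\prod_i[c_{\overline\beta_i,\rho_i}]$ under $\Bbb F[\overline\beta]^{\times}=\prod_i\Bbb F[\overline\beta_i]^{\times}$ and it suffices to treat one block at a time. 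For $n_i=1$ the symplectic space $\Bbb V_{\beta_i}$ is zero, so $c_{\overline\beta_i,\rho_i}\equiv 1$ (equivalently, this is the diagonal case of Proposition \ref{prop:invariant-extension-of-additive-character}). For $n_i\in\{2,3,4\}$, writing $\rho_i=\rho_{A_i}$ with $A_i=\sum_k\rho_k J_{n_i}(0)^k$ as in subsection \ref{subsec:splitting-characteristic-polynomial-case}, the hypothesis $\text{\rm ch}\,\Bbb F>7\geq 2n_i-1$ places us above the threshold required in Examples \ref{ex:trivial-2-cocycle-in-n-is-2}, \ref{ex:trivial-2-cocycle-in-n-is-3}, \ref{ex:trivial-2-cocycle-in-n-is-4}, each of which exhibits an explicit $\delta_i:\Bbb F[\overline\beta_i]^{\times}\to\Bbb C^{\times}$ with $c_{\overline\beta_i,\rho_i}(\varepsilon,\eta)=\delta_i(\eta)\delta_i(\varepsilon\eta)^{-1}\delta_i(\varepsilon)$. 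Multiplying the $\delta_i$ produces a coboundary representation of $c_{\overline\beta,\rho}$; hence $c(\psi)=1$, and since $\psi\in Y(\psi_{\beta})$ was arbitrary, Hypothesis \ref{hyp:schur-multiplier-c-psi-is-trivial-for-all-psi} follows.

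I do not expect a genuine obstacle here: the only point that needs attention is the compatibility of the several choices entering the argument — the linear splitting $[\,\cdot\,]$ of \eqref{eq:canonical-exact-seq-of-v-beta}, the canonical polarization used to kill $c_T$, and the intertwining operators $T(\sigma)$ and $U(\varepsilon)$ — but by Proposition \ref{prop:schur-multiplier-associated-with-finite-symplectic-space} (and the analogous independence statements for $[c_T]$) one argues throughout with cohomology classes rather than with individual cocycles, so nothing has to be rechecked. In other words, the substance of the theorem is already contained in the Examples, and the proof is precisely the bookkeeping that assembles them through the reduction \eqref{eq:reduction-of-c-beta-rho-to-basic-case} and the vanishing of $c_T$ in the split situation.
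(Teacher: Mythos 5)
Your proposal is correct and follows essentially the same route as the paper: the paper's proof is precisely the observation that formula \eqref{eq:explicit-formula-of-schur-multiplier-associated-with-rep} reduces $c(\psi)$ to $[c_{\overline\beta,\rho}c_T]$, that $c_T=1$ for the canonical polarization in the split case, that \eqref{eq:reduction-of-c-beta-rho-to-basic-case} reduces to single Jordan blocks, and that Examples \ref{ex:trivial-2-cocycle-in-n-is-2}--\ref{ex:trivial-2-cocycle-in-n-is-4} supply the explicit coboundaries for $n_i\leq 4$. Your write-up in fact spells out the bookkeeping (the $n_i=1$ case, the characteristic thresholds, the independence of the splitting) more explicitly than the paper does.
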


These results strongly suggest that Hypothesis 
\ref{hyp:schur-multiplier-c-psi-is-trivial-for-all-psi} is valid if 
$\overline\beta\in M_n(\Bbb F)$ is regular and the characteristic
of $\Bbb F$ is big enough.
We will discuss on this subject further in the forthcoming paper.

\section*{References}

Sendai 980-0845, Japan\\
Miyagi University of Education\\
Department of Mathematics\\
e-mail:k-taka2@ipc.miyakyo-u.ac.jp
\end{document}